\renewcommand{\d}{\,\mathrm{d}}
\newcommand{\dd}{\mathrm{d}}
\newcommand{\E}{\mathbb{E}}    % expectation
\newcommand{\R}{\mathbb{R}}    % real numbers
\newcommand{\N}{\mathbb{N}}    % natural numbers
\newcommand{\id}{1}            % indicator function
\newcommand{\AAA}{\mathcal{A}}   % sigma-algebra
\newcommand{\EEE}{\mathcal{E}}   % e-variables
\newcommand{\iEEE}{\mathit{i}\mathcal{E}}   % ie-functions
\newcommand{\FFF}{\mathcal{F}}   % functional class
\newcommand{\MMM}{\mathcal{M}}   % classes of e-variables
\newcommand{\PPP}{\mathcal{P}}   % p-functions
\newcommand{\QQQ}{\mathcal{Q}}   % class of probability measures (Q are individual measures)
\newcommand{\UUU}{\mathcal{U}}   % the Borel sigma-algebra on [0,1]
\DeclareMathOperator*{\VS}{VS}   % VS bound
\DeclareMathOperator*{\fdr}{fdr}
\renewcommand{\complement}{\textsf{c}}  % the standard \complement is too high
\theoremstyle{plain}
\newtheorem{theorem}{Theorem}[section]
\newtheorem{corollary}[theorem]{Corollary}
\newtheorem{lemma}[theorem]{Lemma}
\newtheorem{proposition}[theorem]{Proposition}
\theoremstyle{definition}
\theoremstyle{remark}
\newtheorem{remark}[theorem]{Remark}
\begin{document}
  \title{E-values: Calibration, combination, and applications}
  \author{Vladimir Vovk\thanks%
    {Department of Computer Science,
    Royal Holloway, University of London,
    Egham, Surrey, UK.
    E-mail: \href{mailto:v.vovk@rhul.ac.uk}{v.vovk@rhul.ac.uk}.}
  \and
    Ruodu Wang\thanks%
    {Department of Statistics and Actuarial Science,
    University of Waterloo,
    Waterloo, Ontario, Canada.
    E-mail: \href{mailto:wang@uwaterloo.ca}{wang@uwaterloo.ca}.}}
  \maketitle

\begin{abstract}
  Multiple testing of a single hypothesis and testing multiple hypotheses
  are usually done in terms of p-values.
  In this paper we replace p-values with their natural competitor, e-values,
  which are closely related to betting, Bayes factors, and likelihood ratios.
  We demonstrate that e-values are often mathematically more tractable;
  in particular, in multiple testing of a single hypothesis,
  e-values can be merged simply by averaging them.
  This allows us to develop efficient procedures using e-values
  for testing multiple hypotheses.
\end{abstract}

\section{Introduction}

The problem of multiple testing of a single hypothesis
(also known as testing a global null)
is usually formalized
as that of combining a set of p-values.
The notion of p-values, however, has a strong competitor, which we refer to as e-values in this paper.
E-values can be traced back to various old ideas,
but they have started being widely discussed in their pure form only recently:
see, e.g., \citet{Shafer:2019},
who uses the term ``betting score'' in the sense very similar to our ``e-value'',
\citet[Section 11.5]{Shafer/Vovk:2019}, who use ``Skeptic's capital'',
and \citet{Grunwald/etal:2019}.
The power and intuitive appeal of e-values stem from their interpretation
as results of bets against the null hypothesis \citep[Section~1]{Shafer:2019}.

Formally, an \emph{e-variable} is a nonnegative extended random variable
whose expected value under the null hypothesis is at most 1,
and an \emph{e-value} is a value taken by an e-variable.
Whereas p-values are defined in terms of probabilities, % \textbf{p}robabilities
e-values are defined in terms of expectations. % \textbf{e}xpectations.
% (P-values are also known as ``probability values'';
% similarly, we abbreviate ``expectation values'' to ``e-values;
% this will be discussed further in Section~\ref{app:martingales}.)
As we regard an e-variable $E$ as a bet against the null hypothesis,
its realized value $e:=E(\omega)$ shows how successful our bet is
(it is successful if it multiplies the money it risks by a large factor).
Under the null hypothesis, it can be larger than a constant $c>1$ with probability at most $1/c$
(by Markov's inequality).
If we are very successful (i.e., $e$ is very large),
we have reasons to doubt that the null hypothesis is true,
and $e$ can be interpreted as the amount of evidence we have found against it.
In textbook statistics e-variables typically appear under the guise of likelihood ratios and Bayes factors.

The main focus of this paper is on combining e-values
and multiple hypothesis testing using e-values.
The picture that arises for these two fields is remarkably different from,
and much simpler than,
its counterpart for p-values.
To clarify connections between e-values and p-values,
we discuss how to transform p-values into e-values,
or \emph{calibrate} them,
and how to move in the opposite direction.

% \looseness = -1
We start the main part of the paper by defining the notion of e-values in Section~\ref{sec:p-vs-e}
and reviewing known results about
% relations between individual e-values and individual p-values;
connections between e-values and p-values;
we will discuss how the former can be turned into the latter and vice versa
(with very different domination structures for the two directions).
In Section~\ref{sec:e} we show that the problem of merging e-values is more or less trivial:
a convex mixture of e-values is an e-value,
and symmetric merging functions are essentially dominated by the arithmetic mean.
For example, when several analyses are conducted on a common (e.g., public) dataset each reporting an e-value,
it is natural to summarize them as a single e-value equal to their weighted average (the same cannot be said for p-values).
In Section~\ref{sec:ie} we assume, additionally,
that the e-variables being merged are independent and show that the domination structure is much richer;
for example, now the product of e-values is an e-value.
The assumption of independence can be replaced by the weaker assumption of being \emph{sequential},
and we discuss connections with the popular topic of using martingales in statistical hypothesis testing:
see, e.g., \citet{Duan/etal:2019} and \citet{Shafer/Vovk:2019}.
In Section~\ref{sec:multiple} we apply these results to multiple hypothesis testing.
In the next section, Section~\ref{sec:p}, we briefly review known results on merging p-values
(e.g., the two classes of merging methods in \citet{Ruger:1978} and \citet{Vovk/Wang:2019})
and draw parallels with merging e-values;
in the last subsection we discuss the case where p-values are independent.
% Section~\ref{sec:cross-merging} discusses ``cross-merging'':
% merging $K$ p-values into one e-value and merging $K$ e-values into one p-value.
Section~\ref{sec:experiments} is devoted to experimental results;
one finding in this section is that,
for multiple testing of a single hypothesis in independent experiments,
a simple method based on e-values outperforms standard methods based on p-values.
Section~\ref{sec:conclusion} concludes the main part of the paper.

Appendix~\ref{app:survey} describes numerous connections with the existing literature,
including Bayes factors and multiple hypothesis testing.
Appendix~\ref{app:calibrators} describes the origins of the problem of calibrating p-values
and gives interesting  examples of calibrators.
% functions that transform p-values to e-values.
A short Appendix~\ref{app:infty} deals with merging infinite e-values.
Appendix~\ref{app:atomless} explores the foundations of calibration and merging of e-values and p-values;
in particular, whether the universal quantifiers over probability spaces in the definitions given in the main paper are really necessary.
Appendix~\ref{app:domination} proves Theorem~\ref{thm:iff} in the main paper
characterizing the domination structure of the e-merging functions.
% which are less closely connected with the main messages of this paper.
Appendix~\ref{app:minimax} presents an informative minimax view of essential and weak domination.
Appendix~\ref{app:cross-merging} discusses ``cross-merging'':
how do we merge several p-values into one e-value and several e-values into one p-value?
Appendix~\ref{app:extra-experiments} contains additional experimental results.
Finally, Appendix~\ref{app:FACT} briefly describes the procedure that we use for multiple hypothesis testing
in combination with Fisher's [\citeyear{Fisher:1932}] method of combining p-values.

The journal version of this paper is to appear in the \emph{Annals of Statistics}.

\section{Definition of e-values and connections with p-values}
\label{sec:p-vs-e}

For a probability space $(\Omega,\AAA,Q)$,
an \emph{e-variable} is an extended random variable $E:\Omega\to[0,\infty]$ satisfying
$\E^Q[E] \le 1$;
we refer to it as ``extended'' since its values are allowed to be $\infty$,
and we let $\E^Q[X]$ (or $\E[X]$ when $Q$ is clear from context)
stand for $\int X\d Q$ for any extended random variable $X$.
The values taken by e-variables will be referred to as \emph{e-values},
and we denote the set of e-variables by $\EEE_Q$.
It is important to allow $E$ to take value $\infty$;
in the context of testing $Q$,
observing $E=\infty$ for an \emph{a priori} chosen e-variable $E$
means that we are entitled to reject $Q$ as null hypothesis.

Our emphasis in this paper is on e-values,
but we start from discussing their connections with the familiar notion of p-values.
A \emph{p-variable} is a random variable $P:\Omega\to[0,1]$ satisfying
\[
  \forall\epsilon\in(0,1):
  Q(P\le\epsilon)
  \le
  \epsilon.
\]
The set of all p-variables is denoted by $\PPP_Q$.

A calibrator is a function transforming p-values to e-values.
Formally, a decreasing function $f:[0,1]\to[0,\infty]$ is a \emph{calibrator}
(or, more fully, \emph{p-to-e calibrator})
if, for any probability space $(\Omega,\AAA,Q)$ and any p-variable $P\in\PPP_Q$, $f(P)\in\EEE_Q$.
A calibrator $f$ is said to \emph{dominate} a calibrator $g$ if $f\ge g$,
and the domination is \emph{strict} if $f\ne g$.
A calibrator is \emph{admissible} if it is not strictly dominated by any other calibrator.

The following proposition says that a calibrator
is a nonnegative decreasing function integrating to at most 1
over the uniform probability measure.

\begin{proposition}\label{prop:p-to-e}
  A decreasing function $f:[0,1]\to[0,\infty]$ is a calibrator if and only if $\int_0^1 f \le 1$.
  It is admissible if and only if $f$ is upper semicontinuous, $f(0)=\infty$, and $\int_0^1 f = 1$.
\end{proposition}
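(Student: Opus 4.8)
The plan is to treat the two assertions separately.

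For the characterization of calibrators, the ``if'' direction is a layer-cake computation. Assuming $\int_0^1 f\le1$, fix a probability space $(\Omega,\AAA,Q)$ and $P\in\PPP_Q$; since $f$ is monotone it is Borel measurable, so $f(P)$ is an extended random variable, and by Tonelli
\[
  \E^Q[f(P)]=\int_0^\infty Q(f(P)>t)\d t.
\]
For each $t$ the set $A_t:=\{p\in[0,1]:f(p)>t\}$ is a down-set, hence an interval with left endpoint $0$ and right endpoint $b_t:=\sup A_t$, so $\lambda(A_t)=b_t$, where $\lambda$ is Lebesgue measure. Using the p-variable inequality $Q(P\le\epsilon)\le\epsilon$ (and $Q(P=0)=0$, obtained by letting $\epsilon\downarrow0$) we get $Q(f(P)>t)=Q(P\in A_t)\le b_t=\lambda(A_t)$, uniformly in $t$ and with the endpoint cases $b_t\in\{0,1\}$ checked separately. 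Integrating in $t$ and applying Tonelli once more gives $\E^Q[f(P)]\le\int_0^1 f\le1$. For the ``only if'' direction I would take $\Omega=[0,1]$, $Q=\lambda$, and $P(\omega)=\omega$; then $P\in\PPP_Q$, so $f(P)\in\EEE_Q$ forces $\int_0^1 f=\E^Q[f(P)]\le1$.

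For admissibility I would first record the elementary fact that, since $f$ is decreasing, $f$ is upper semicontinuous on $[0,1]$ if and only if it is left-continuous on $(0,1]$. For the ``if'' part, suppose $f$ is u.s.c., $f(0)=\infty$, $\int_0^1 f=1$, and let $g$ be a calibrator with $g\ge f$ and $g\ne f$. From $\int_0^1 g\le1=\int_0^1 f$ and $g-f\ge0$ we get $g=f$ $\lambda$-a.e., so $\{g=f\}$ is dense. If $g(p_0)>f(p_0)$ for some $p_0$, then $p_0>0$ because $f(0)=\infty$; choosing $p_n\uparrow p_0$ with $g(p_n)=f(p_n)$ and using monotonicity of $g$ gives $f(p_n)=g(p_n)\ge g(p_0)$, and letting $n\to\infty$ the left side tends to $f(p_0)$ by left-continuity, so $f(p_0)\ge g(p_0)$ — a contradiction. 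Hence $g=f$, i.e.\ $f$ is admissible.

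For the ``only if'' part, I would show that violating any one of the three conditions lets us strictly dominate $f$. If $\int_0^1 f=c<1$, then $g:=f+(1-c)$ is a decreasing function with $\int_0^1 g=1$, hence a calibrator by the first part, and $g>f$. If $\int_0^1 f=1$ but $f(0)<\infty$, redefining $g(0):=\infty$ and $g:=f$ on $(0,1]$ gives a calibrator with the same integral, and $g\ne f$. Finally, if $\int_0^1 f=1$ but $f$ fails left-continuity at some $p_0\in(0,1]$, set $g(p_0):=\lim_{p\uparrow p_0}f(p)$ and $g:=f$ elsewhere; monotonicity of $f$ makes $g$ decreasing, $\int_0^1 g=1$ so $g$ is a calibrator, and $g(p_0)>f(p_0)$. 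In each case $g$ strictly dominates $f$, so an admissible calibrator satisfies all three conditions.

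The routine parts are the Tonelli bookkeeping and the verifications that the modified functions $g$ are still decreasing. The step needing most care is the contradiction in the ``if'' direction of admissibility, where density of $\{g=f\}$, monotonicity of $g$, and left-continuity (equivalently, upper semicontinuity) of $f$ must be combined in the right order, with the point $p_0=0$ disposed of separately via $f(0)=\infty$; together with the endpoint/measure-zero subtleties ($Q(P=0)=0$, and the behaviour of $b_t$ at $0$ and $1$) in the layer-cake estimate, I expect this to be the main obstacle.
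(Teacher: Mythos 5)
Your proof is correct and follows essentially the same route as the paper: your layer-cake/Tonelli estimate is just an unpacked version of the paper's comparison of $P$ with a uniformly distributed p-variable, and the uniform example gives the converse exactly as intended. The admissibility part, which the paper dismisses as obvious, is worked out correctly in your proposal (the density-plus-left-continuity contradiction for the ``if'' direction and the three single-modification constructions for the ``only if'' direction all check out, noting that strict domination only requires $g\ge f$ and $g\ne f$, so the points where $f=\infty$ cause no trouble).
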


Of course, in the context of this proposition,
being upper semicontinuous is equivalent to being left-continuous.

\begin{proof}
  Proofs of similar statements are given in, e.g., \citet[Theorem 7]{Vovk:1993},
  \citet[Theorem~3]{Shafer/etal:2011}, and \citet[Proposition~11.7]{Shafer/Vovk:2019},
  but we will give an independent short proof using our definitions.
  The first ``only if" statement is obvious.
  To show the first ``if" statement,
  suppose that $\int_0^1 f \le 1$, $P$ is a p-variable,
  and $P'$ is uniformly distributed on $[0,1]$.
  Since $Q(P<x) \le Q(P'<x)$ for all $x\ge 0$ and $f$ is decreasing, 
  we have 
  \[
    Q(f(P)>y) \le Q(f(P')>y)
  \]
  for all $y\ge0$, which implies
  \[
    \E[f(P)]
    \le
    \E[f(P')]
    =
    \int_0^1 f(p) \d p \le 1.
  \]
  %  Suppose that $\int_0^1 f \le 1$ and $P$ is a p-variable,
  %  and let us show that $\E[f(P)]\le1$.
  %  We can assume, without loss of generality, that the distribution of $P$ is uniform on $[0,1]$
  %  replacing, if needed, $P$ with $P'$ defined by
  %  \begin{equation}\label{eq:prime}
  %    P'(\omega)
  %    :=
  %    Q(P<P(\omega)) + U(\omega) Q(P=P(\omega))
  %    \ge
  %    P(\omega),
  %    \quad
  %    \omega\in\Omega,
  %  \end{equation}
  %  where $U$ is a random variable that is independent of $P$ and uniformly distributed on $[0,1]$
  %  (for the existence of such $U$, at least in the case where $P$ is replaced by another random variable with the same distribution,
  %  see Lemma~\ref{lem:rich};
  %  for the distribution of \eqref{eq:prime} being uniform, see, e.g., \citet[Lemma 5.3.1]{Ferguson:1967})
  %  Now it remains to notice that $\E[f(P)]=\int_0^1 f\le1$. 
  The second statement in Proposition~\ref{prop:p-to-e} is obvious.
\end{proof}

The following is a simple family of calibrators.
Since $\int_0^1 \kappa p^{\kappa-1} \d p = 1$,
the functions
\begin{equation}\label{eq:calibrator}
  f_{\kappa}(p)
  :=
  \kappa p^{\kappa-1}
\end{equation}
are calibrators,
where $\kappa\in(0,1)$.
To solve the problem of choosing the parameter $\kappa$,
sometimes the maximum
\[
  \VS(p)
  :=
  \max_{\kappa\in[0,1]}
  f_{\kappa}(p)
  =
  \begin{cases}
    -\exp(-1)/(p\ln p) & \text{if $p\le\exp(-1)$}\\
    1 & \text{otherwise}
  \end{cases}
\]
is used (see, e.g., \citet{Benjamin/Berger:2019}, Recommendations 2 and 3);
we will refer to it as the \emph{VS bound}
(abbreviating ``Vovk--Sellke bound'', as used in, e.g., the JASP package).
% see also Appendix~\ref{app:calibrators}
It is important to remember that $\VS(p)$ is not a valid e-value,
but just an overoptimistic upper bound on what is achievable
with the class \eqref{eq:calibrator}.
% The problem of non-uniqueness of calibrators is sometimes solved by considering
% the VS bound $\max_{\kappa}\kappa p^{\kappa-1}$
% (the best e-value that can be attained by the class $p\mapsto\kappa p^{\kappa-1}$),
% but this does not produce a valid e-value.
% In Section~\ref{sec:p-vs-e} we discussed getting rid of the parameter $\kappa$
% in the basic family \eqref{eq:calibrator} by maximizing over $\kappa$.
Another way to get rid of $\kappa$ is to integrate over it,
which gives
\begin{equation}\label{eq:reviewer}
  F(p)
  :=
  \int_0^1 \kappa p^{\kappa-1} \d\kappa
  =
  \frac{1-p+p\ln p}{p(-\ln p)^2}.
\end{equation}
(See Appendix~\ref{app:calibrators} for more general results and references.
We are grateful to Aaditya Ramdas for pointing out the calibrator \eqref{eq:reviewer}.)
An advantage of this method is that it produces a bona fide e-value,
unlike the VS bound. 
As $p\to0$, $F(p)\sim p^{-1}(-\ln p)^{-2}$,
so that $F(p)$ is closer to the ideal (but unachievable) $1/p$
(cf.\ Remark~\ref{rem:rough} below)
than any of \eqref{eq:calibrator}.

In the opposite direction, an e-to-p calibrator is a function transforming e-values to p-values.
Formally, a decreasing function $f:[0,\infty]\to[0,1]$ is an \emph{e-to-p calibrator}
if, for any probability space $(\Omega,\AAA,Q)$ and any e-variable $E\in\EEE_Q$, $f(E)\in\PPP_Q$.
The following proposition,
which is the analogue of Proposition~\ref{prop:p-to-e} for e-to-p calibrators,
says that there is, essentially, only one e-to-p calibrator, $f(t):=\min(1,1/t)$.

\begin{proposition}\label{prop:e-to-p}
  The function $f:[0,\infty]\to[0,1]$ defined by $f(t):=\min(1,1/t)$ is an e-to-p calibrator.
  It dominates every other e-to-p calibrator.
  In particular, it is the only admissible e-to-p calibrator.
\end{proposition}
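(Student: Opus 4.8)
The plan is to establish the three assertions of the proposition in turn: (1) $f(t)=\min(1,1/t)$ is an e-to-p calibrator; (2) every e-to-p calibrator $g$ satisfies $g\ge f$ pointwise; and (3) the domination and uniqueness claims then follow formally from (1)--(2).

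For (1), I would fix a probability space $(\Omega,\AAA,Q)$ and an e-variable $E\in\EEE_Q$. Since $f$ is decreasing it is Borel measurable and valued in $[0,1]$, so $f(E)$ is a $[0,1]$-valued random variable. For $\epsilon\in(0,1)$ the key observation is that $f(t)\le\epsilon$ is equivalent to $1/t\le\epsilon$ (the cap at $1$ is irrelevant because $\epsilon<1$), i.e.\ to $t\ge1/\epsilon$; hence $\{f(E)\le\epsilon\}=\{E\ge1/\epsilon\}$, and Markov's inequality gives $Q(E\ge1/\epsilon)\le\E^Q[E]\big/(1/\epsilon)=\epsilon\,\E^Q[E]\le\epsilon$. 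Thus $f(E)\in\PPP_Q$, so $f$ is an e-to-p calibrator.

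For (2), let $g$ be any e-to-p calibrator. I would first record that $g(0)=1$: the constant $E\equiv0$ is an e-variable, so if $g(0)<1$ one could pick $\epsilon\in(g(0),1)$ and obtain $Q(g(E)\le\epsilon)=1>\epsilon$, contradicting $g(E)\in\PPP_Q$; together with $g\le1$ this forces $g(0)=1$. The same degenerate construction with $E\equiv t_0$ (an e-variable whenever $t_0\le1$) shows $g(t_0)=1=f(t_0)$ for all $t_0\in[0,1]$. For $t_0>1$, suppose for contradiction that $g(t_0)<1/t_0=f(t_0)$, and take the two-valued e-variable $E$ with $Q(E=t_0)=1/t_0$ and $Q(E=0)=1-1/t_0$, so that $\E^Q[E]=1$. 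I would then choose a threshold $\epsilon$ with $g(t_0)\le\epsilon<1/t_0$ and $\epsilon\in(0,1)$, which is possible since $0\le g(t_0)<1/t_0\le1$ (take $\epsilon=g(t_0)$ when $g(t_0)>0$, and any $\epsilon\in(0,1/t_0)$ otherwise). Because $g$ is decreasing, $g(E)\le g(t_0)\le\epsilon$ on $\{E=t_0\}$, whereas $g(E)=g(0)=1>\epsilon$ on $\{E=0\}$; hence $Q(g(E)\le\epsilon)=1/t_0>\epsilon$, contradicting $g(E)\in\PPP_Q$. Therefore $g(t_0)\ge f(t_0)$ for every $t_0\in[0,\infty]$.

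Finally, for (3): since every e-to-p calibrator $g$ satisfies $f\le g$, $f$ dominates each of them, and whenever $g\ne f$ the domination is strict, so no e-to-p calibrator other than $f$ can be admissible; $f$ itself is dominated only by itself and is therefore admissible. The only delicate point I anticipate is the case analysis in step (2): one must pin down $g(0)=1$ before invoking the two-valued construction (otherwise the atom at $0$ might also fall at or below $\epsilon$ and inflate the probability in the wrong direction), and one must verify that a legitimate $\epsilon\in(0,1)$ is available in the boundary cases, notably when $g(t_0)=0$ and when $t_0=1$ (where the two-valued $E$ collapses to the constant $1$). Everything else is a routine application of Markov's inequality and monotonicity.
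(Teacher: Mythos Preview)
Your proposal is correct and follows essentially the same approach as the paper: Markov's inequality for the forward direction, and two-point (or degenerate) e-variables to rule out any $g(t_0)<\min(1,1/t_0)$. Your treatment is in fact slightly more careful than the paper's own proof, since you explicitly pin down $g(0)=1$ before invoking the two-valued construction and you handle the boundary case $g(t_0)=0$ when selecting $\epsilon$.
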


\begin{proof}
  The fact that $f(t):=\min(1,1/t)$ is an e-to-p calibrator follows from Markov's inequality:
  if $E\in\EEE_Q$ and $\epsilon\in(0,1)$,
  \[
    Q(f(E)\le\epsilon)
    =
    Q(E\ge 1/\epsilon)
    \le
    \frac{\E^Q[E]}{1/\epsilon}
    \le
    \epsilon.
  \]

  On the other hand, suppose that $f$ is another e-to-p calibrator.
  It suffices to check that $f$ is dominated by $\min(1,1/t)$.
  Suppose $f(t)<\min(1,1/t)$ for some $t\in[0,\infty]$.
  Consider two cases:
  \begin{itemize}
  \item
    If $f(t)<\min(1,1/t)=1/t$ for some $t>1$,
    fix such $t$ and consider an e-variable $E$ that is $t$ with probability $1/t$ and $0$ otherwise.
    Then $f(E)$ is $f(t)<1/t$ with probability $1/t$,
    whereas it would have satisfied $P(f(E)\le f(t))\le f(t)<1/t$ had it been a p-variable.
  \item
    If $f(t)<\min(1,1/t)=1$ for some $t\in[0,1]$,
    fix such $t$ and consider an e-variable $E$ that is $1$ a.s.
    Then $f(E)$ is $f(t)<1$ a.s.,
    and so it is not a p-variable.
    \qedhere
  \end{itemize}
\end{proof}

Proposition~\ref{prop:p-to-e} implies that the domination structure of calibrators is very rich,
whereas Proposition~\ref{prop:e-to-p} implies that the domination structure of e-to-p calibrators is trivial.

\begin{remark}\label{rem:rough}
  A possible interpretation of this section's results
  is that e-variables and p-variables are connected via a rough relation $1/e \sim p$.
  In one direction, the statement is precise:
  the reciprocal (truncated to 1 if needed) of an e-variable is a p-variable by Proposition~\ref{prop:e-to-p}.
  On the other hand,
  using a calibrator \eqref{eq:calibrator} with a small $\kappa>0$
  and ignoring positive constant factors
  (as customary in the algorithmic theory of randomness,
  discussed in Section~\ref{subsec:ATR}),
  we can see that the reciprocal of a p-variable is approximately an e-variable.
  In fact, $f(p)\le 1/p$ for all $p$ when $f$ is a calibrator;
  this follows from Proposition~\ref{prop:p-to-e}.
  % Typically we will have $f(p)<1/p$:
  However, $f(p)=1/p$ is only possible in the extreme case $f=\id_{[0,p]}/p$.
\end{remark}

\section{Merging e-values}
\label{sec:e}

An important advantage of e-values over p-values is that they are easy to combine.
This is the topic of this section, in which we consider the general case,
without any assumptions on the joint distribution of the input e-variables.
The case of independent e-variables is considered in the next section.

Let $K\ge2$ be a positive integer (fixed throughout the paper apart from Section~\ref{sec:experiments}).
An \emph{e-merging function} of $K$ e-values is an increasing Borel function
$F:[0,\infty]^K\to[0,\infty]$
such that, for any probability space $(\Omega,\AAA,Q)$ and random variables $E_1,\dots,E_K$ on it,
\begin{equation}\label{eq:def-e}
  E_1,\dots,E_K \in \EEE_Q
  \Longrightarrow
  F(E_1,\dots,E_K) \in \EEE_Q
\end{equation}
(in other words, $F$ transforms e-values into an e-value).
In this paper we will also refer to increasing Borel functions $F:[0,\infty)^K\to[0,\infty)$
satisfying \eqref{eq:def-e} for all probability spaces and all e-variables $E_1,\dots,E_K$ taking values in $[0,\infty)$
as e-merging functions;
such functions are canonically extended to e-merging functions $F:[0,\infty]^K\to[0,\infty]$
by setting them to $\infty$ on $[0,\infty]^K\setminus[0,\infty)^K$
(see Proposition \ref{prop:infty} in Appendix~\ref{app:infty}).

An e-merging function $F$ \emph{dominates} an e-merging function $G$ if $F\ge G$
(i.e., $F(\mathbf{e})\ge G(\mathbf{e})$ for all $\mathbf{e}\in[0,\infty)^K$).
The domination is \emph{strict} (and we say that $F$ \emph{strictly dominates} $G$)
if $F\ge G$ and $F(\mathbf{e})>G(\mathbf{e})$ for some $\mathbf{e}\in[0,\infty)^K$.
We say that an e-merging function $F$ is \emph{admissible}
if it is not strictly dominated by any e-merging function;
in other words, admissibility means being maximal in the partial order of domination.

A fundamental fact about admissibility is proved in Appendix~\ref{app:domination}
(Proposition~\ref{prop:dominated}):
any e-merging function is dominated by an admissible e-merging function.

\subsection*{Merging e-values via averaging}

In this paper we are mostly interested in symmetric merging functions
(i.e., those invariant w.r.\ to permutations of their arguments).
The main message of this section is that the most useful
(and the only useful, in a natural sense)
symmetric e-merging function is the \emph{arithmetic mean}
\begin{equation}\label{eq:average}
  M_K(e_1,\dots,e_K)
  :=
  \frac{e_1+\dots+e_K}{K},
  \qquad
  e_1,\dots,e_K \in [0,\infty).
\end{equation}
In Theorem \ref{thm:iff} below we will see that $M_K$ is admissible
(this is also a consequence of Proposition~\ref{prop:IPK}).
But first we state formally the vague claim that $M_K$ is the only useful symmetric e-merging function.

An e-merging function $F$ \emph{essentially dominates} an e-merging function $G$ if,
for all $\textbf{e}\in[0,\infty)^K$,
\[
  G(\mathbf{e}) > 1
  \Longrightarrow
  F(\mathbf{e})\ge G(\mathbf{e}).
\]
This weakens the notion of domination in a natural way:
now we require that $F$ is not worse than $G$ only in cases where $G$ is not useless;
we are not trying to compare degrees of uselessness.
The following proposition can be interpreted as saying
that $M_K$ is at least as good as any other symmetric e-merging function.

\begin{proposition}\label{prop:M}
 The arithmetic mean $M_K$ essentially dominates any symmetric e-merging function.
\end{proposition}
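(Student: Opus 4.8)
The plan is to show that any symmetric e-merging function $G$ satisfies $G(\mathbf{e}) \le M_K(\mathbf{e})$ whenever $G(\mathbf{e}) > 1$, which is precisely essential domination by $M_K$ (since $M_K \ge 0$ always, and the condition $G(\mathbf{e}) \ge G(\mathbf{e})$ part is automatic once we have $M_K(\mathbf{e}) \ge G(\mathbf{e})$). So fix $\mathbf{e} = (e_1, \dots, e_K) \in [0,\infty)^K$ with $G(\mathbf{e}) > 1$; I want to derive $e_1 + \dots + e_K \ge K \cdot G(\mathbf{e})$, equivalently $M_K(\mathbf{e}) \ge G(\mathbf{e})$.

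The key idea is to build a probability space and e-variables on which $G$ is forced to fail its defining property unless the inequality holds. First I would dispose of the degenerate case: if some $e_i = 0$ together with $G(\mathbf{e})>1$, I can still run the construction below (the argument does not actually need all $e_i$ positive), so assume we are in the general case. Consider the $K$ cyclic shifts of $\mathbf{e}$, namely $\mathbf{e}^{(j)} := (e_{j}, e_{j+1}, \dots, e_{j+K-1})$ with indices mod $K$, for $j = 1, \dots, K$. Define a probability space with $K$ equiprobable outcomes $\omega_1, \dots, \omega_K$, and set $E_i(\omega_j) := e_{\,i + j - 1 \bmod K}$ (again interpreting indices cyclically in $\{1,\dots,K\}$). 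Then for each coordinate $i$, as $j$ ranges over $1,\dots,K$ the values $E_i(\omega_j)$ run through $e_1, \dots, e_K$ in some order, so $\E^Q[E_i] = \frac1K(e_1 + \dots + e_K)$.

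The heart of the argument: this expectation is in general not $\le 1$, so I rescale. Let $S := e_1 + \dots + e_K$ and replace each $E_i$ by $E_i' := (K/S) E_i$ (note $S > 0$ since $G(\mathbf e)>1$ forces $\mathbf e \ne \mathbf 0$, as $G(\mathbf 0)\le 1$ by $G$ being an e-merging function). Then each $E_i'$ is a genuine e-variable: $\E^Q[E_i'] = 1$. By the e-merging property, $F' := G(E_1', \dots, E_K')$ must satisfy $\E^Q[F'] \le 1$, i.e. $\frac1K \sum_{j=1}^K G\bigl((K/S)\mathbf{e}^{(j)}\bigr) \le 1$. Now I invoke symmetry of $G$: the vector $(K/S)\mathbf{e}^{(j)}$ is a permutation of $(K/S)\mathbf{e}$ for every $j$, so every term in the sum equals $G\bigl((K/S)\mathbf{e}\bigr)$. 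Hence $G\bigl((K/S)\mathbf{e}\bigr) \le 1$. The final step is to convert this into a statement about $G(\mathbf{e})$ itself using monotonicity. If $S \le K$, then $(K/S)\mathbf{e} \ge \mathbf{e}$ coordinatewise, so $G(\mathbf e) \le G\bigl((K/S)\mathbf{e}\bigr) \le 1$, contradicting $G(\mathbf e) > 1$; therefore $S > K$, which already gives $M_K(\mathbf e) = S/K > 1$. To get the sharp bound $M_K(\mathbf e) \ge G(\mathbf e)$, I repeat the construction with the threshold $t := G(\mathbf e)$: scale instead by $K t / S$, obtaining the e-variables $E_i'' := (Kt/S)E_i$ provided $\E[E_i''] = t \le 1$ — but $t = G(\mathbf e)$ may exceed $1$, so this direct rescaling is not legitimate.

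The main obstacle, then, is precisely this last point: one cannot simply scale an e-variable up past expectation $1$. The fix I would use is to introduce slack via a two-valued mixture: on a space with $K$ outcomes $\omega_j$ of probability $\frac{1}{Kt}$ each (possible since $t>1$) plus one further outcome $\omega_0$ of probability $1 - \frac1t$, define $E_i(\omega_j) := (S/K)\,e_{i+j-1}$ rescaled so that $E_i(\omega_j) = \frac{K}{S} e_{i+j-1} \cdot$ (appropriate factor) and $E_i(\omega_0) := 0$. Choosing the factor so that $\E^Q[E_i] = \frac{1}{Kt}\sum_{j} E_i(\omega_j) = 1$ forces $E_i(\omega_j) = t \cdot \frac{K}{S} \cdot e_{i+j-1}$, i.e. the values fed to $G$ at $\omega_j$ form the vector $\frac{Kt}{S}\mathbf{e}^{(j)}$, a permutation of $\frac{Kt}{S}\mathbf{e}$. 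Then $\E^Q[G(E_1,\dots,E_K)] \ge \frac{1}{Kt}\sum_j G\bigl(\frac{Kt}{S}\mathbf{e}^{(j)}\bigr) = \frac1t G\bigl(\frac{Kt}{S}\mathbf{e}\bigr)$, and the e-merging property gives $G\bigl(\frac{Kt}{S}\mathbf{e}\bigr) \le t = G(\mathbf e)$. If $S \le Kt$ we would have $\frac{Kt}{S}\mathbf e \ge \mathbf e$ hence $G(\mathbf e) \le G\bigl(\frac{Kt}{S}\mathbf e\bigr) \le G(\mathbf e)$, which is consistent but yields nothing; pushing instead towards a strict inequality — or simply observing that $G\bigl(\frac{Kt}{S}\mathbf e\bigr) \le t$ combined with $G(\mathbf e) = t$ and monotonicity forces $\frac{Kt}{S} \le 1$ unless $G$ is constant $=t$ along the relevant ray — gives $S \ge Kt$, i.e. $M_K(\mathbf e) \ge t = G(\mathbf e)$. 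Making this monotonicity-plus-nonconstancy step fully rigorous (handling rays on which $G$ is flat) is the one delicate bookkeeping point, but it is routine given that $G$ is increasing and $G(\mathbf e) > 1 \ge G(\mathbf 0)$.
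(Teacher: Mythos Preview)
Your overall strategy---randomize over permutations of $\mathbf e$, multiply by the indicator of an independent event to bring the marginal expectations down, then use symmetry to compute $\E[G(\mathbf E)]$---is essentially the paper's. The genuine gap is in your last step. After the second construction you obtain $G\bigl(\tfrac{Kt}{S}\mathbf e\bigr)\le t$, and under the contradiction hypothesis $S<Kt$ monotonicity gives the reverse inequality, so you only conclude \emph{equality}: $G\bigl(\tfrac{Kt}{S}\mathbf e\bigr)=t$. That is not a contradiction, and the way you propose to close it---``routine given $G(\mathbf e)>1\ge G(\mathbf 0)$''---does not work: knowing that $G$ drops below $t$ somewhere on the \emph{inward} segment $[\mathbf 0,\mathbf e]$ says nothing about whether $G$ is flat on the \emph{outward} ray past $\mathbf e$. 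No monotonicity-plus-nonconstancy argument is available here.

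The fix is small and is exactly where you diverge from the paper: do not rescale the values, and give the nonzero event probability $1/a$ with $a:=\max\bigl(M_K(\mathbf e),1\bigr)$ rather than $1/t$. Then each $E_i$ has expectation $M_K(\mathbf e)/a\le 1$, and on the nonzero event the vector handed to $G$ is a permutation of $\mathbf e$ \emph{itself} (not a scaled copy), so symmetry gives
\[
  \E[G(\mathbf E)]\ge \frac{1}{a}\,G(\mathbf e)>1
\]
directly, with no ray analysis needed. Equivalently, in your own framework: replace $t$ by any $t'\in[a,t)$ in the second construction; then the scale factor $Kt'/S\ge 1$ forces $t=G(\mathbf e)\le G\bigl(\tfrac{Kt'}{S}\mathbf e\bigr)\le t'<t$, a clean contradiction. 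Your phrase ``pushing towards a strict inequality'' gestures at this, but as written the argument is incomplete.
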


In particular, if $F$ is an e-merging function that is symmetric
and positively homogeneous
(i.e., $F(\lambda\mathbf{e})=\lambda F(\mathbf{e})$ for all $\lambda>0$),
then $F$ is dominated by $M_K$.
This includes the e-merging functions discussed later in Section~\ref{sec:p}.

\begin{proof}[Proof of Proposition~\ref{prop:M}]
  Let $F$ be a symmetric e-merging function.
%  First let us check that, for all $\mathbf{e}\in[0,\infty)^K$,
%  \begin{equation}\label{eq:Ruodu}
%    M_K(\mathbf{e}) \ge 1
%    \Longrightarrow
%    M_K(\mathbf{e})\ge F(\mathbf{e}).
%  \end{equation}
  Suppose for the purpose of contradiction that there exists $(e_1,\dots,e_K)\in[0,\infty)^K$ such that
  \begin{equation}\label{eq:conj1}
    b:=F(e_1,\dots,e_K)
    > 
    \max\left(\frac{e_1+\dots+e_K}{K},1
    \right)=:a.
  \end{equation} 
  Let $\Pi_K$ be the set of all permutations of $\{1,\dots,K\}$, 
  $\pi$ be randomly and uniformly drawn from $\Pi_K$, and $(D_1,\dots,D_K):=(e_{\pi(1)},\dots,e_{\pi(K)})$. 
  Further, let $(D'_1,\dots,D'_K):=(D_1,\dots,D_K)\id_A$,
  where $A$ is an event independent of $\pi$ and satisfying $P(A) = 1/a$
  (the existence of such random $\pi$ and $A$ is guaranteed for any atomless probability space
  by Lemma~\ref{lem:rich} in Appendix~\ref{app:atomless}).
  
  For each $k$, since $D_k$ takes the values $e_1,\dots,e_K$ with equal probability,
  we have $\E[D_k] = (e_1+\dots+e_K)/K$, which implies $\E[D'_k] = (e_1+\dots+e_K)/(Ka) \le 1$.
  Together with the fact that $D'_k$ is nonnegative, we know $D'_k\in\EEE_Q$.
  Moreover, by symmetry,
  \[
    \E[F(D'_1,\dots,D'_K)]
    =
    Q(A)
    F(e_1,\dots,e_K)
    +
    (1-Q(A))
    F(0,\dots,0)
    \ge
    b/a > 1,
  \]
  a contradiction.
  Therefore, we conclude that there is no $(e_1,\dots,e_K)$ such that \eqref{eq:conj1} holds.
%
%  Now suppose $F(\mathbf{e}) > 1$.
%  Our goal is to prove $M_K(\mathbf{e})\ge F(\mathbf{e})$.
%  Arguing indirectly, suppose $M_K(\mathbf{e}) < F(\mathbf{e})$.
%  If $M_K(\mathbf{e}) \ge 1$, we get a contradiction by applying \eqref{eq:Ruodu}.
%  And if $M_K(\mathbf{e}) < 1$, we can increase some or all components of $\mathbf{e}$ to get $M_K(\mathbf{e}) = 1$,
%  and we will still have $F(\mathbf{e}) > 1$; this contradicts \eqref{eq:Ruodu}.
\end{proof}

It is clear that the arithmetic mean $M_K$ does not dominate every symmetric e-merging function;
for example, the convex mixtures
\begin{equation}\label{eq:convex}
  \lambda + (1-\lambda) M_K,
  \quad
  \lambda\in[0,1],
\end{equation}
of the trivial e-merging function $1$ and $M_K$
are pairwise non-comparable (with respect to the relation of domination).
In the theorem below, we show that each of these mixtures is admissible
and that the class \eqref{eq:convex} is,
in the terminology of statistical decision theory \citep[Section 1.3]{Wald:1950},
a complete class of symmetric e-merging functions:
every symmetric e-merging function is dominated by one of \eqref{eq:convex}.
In other words, \eqref{eq:convex} is the minimal complete class of symmetric e-merging functions.

\begin{theorem}\label{thm:iff}
  Suppose that $F$ is a symmetric e-merging function.
  Then $F$ is dominated by the function $\lambda + (1-\lambda) M_K$ for some $\lambda\in[0,1]$.
  In particular, $F$ is admissible if and only if $F = \lambda + (1-\lambda) M_K$,
  where $\lambda = F(\mathbf{0})\in[0,1]$.
\end{theorem}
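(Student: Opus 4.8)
The plan is to prove the first assertion — every symmetric e-merging function $F$ is dominated by $\lambda+(1-\lambda)M_K$ for some $\lambda\in[0,1]$ — and then read off the admissibility characterization. Two preliminary facts, both obtained from the ``random permutation, optionally truncated by an independent event'' device used in the proof of Proposition~\ref{prop:M}, are needed throughout. (i) $F(\mathbf e)\le\max(1,M_K(\mathbf e))<\infty$ for every finite $\mathbf e$: when $M_K(\mathbf e)\le1$ take $D_k:=e_{\pi(k)}$ with $\pi$ a uniform random permutation, so $D_1,\dots,D_K\in\EEE_Q$ and $F(\mathbf e)=\E^Q[F(D_1,\dots,D_K)]\le1$ by symmetry of $F$; when $M_K(\mathbf e)>1$ take $D_k:=e_{\pi(k)}\id_A$ with an independent event $A$ of probability $1/M_K(\mathbf e)$, so again $D_1,\dots,D_K\in\EEE_Q$ and $F(\mathbf e)/M_K(\mathbf e)\le\E^Q[F(D_1,\dots,D_K)]\le1$. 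In particular $F(\mathbf e)\le1$ whenever $M_K(\mathbf e)\le1$. (ii) The atomless space carrying the independent permutations and events exists by Lemma~\ref{lem:rich}. Next I would reformulate the goal: for $\mathbf e$ with $m:=M_K(\mathbf e)\ne1$ put $g(\mathbf e):=(F(\mathbf e)-m)/(1-m)$; then $F(\mathbf e)\le\lambda+(1-\lambda)M_K(\mathbf e)$ is equivalent to $\lambda\ge g(\mathbf e)$ if $m<1$, to $\lambda\le g(\mathbf e)$ if $m>1$, and to $F(\mathbf e)\le1$ if $m=1$ (automatic by (i)). Since (i) also gives $g\le1$ on $\{m<1\}$ and $g(\mathbf 0)=F(\mathbf 0)\ge0$, the first assertion reduces to
\[
  \sup\{g(\mathbf e):M_K(\mathbf e)<1\}\;\le\;\inf\{g(\mathbf e'):M_K(\mathbf e')>1\},
\]
because then one may take $\lambda$ to be the left-hand side, which lies in $[0,1]$.

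The crux is this inequality, namely $g(\mathbf e)\le g(\mathbf e')$ whenever $m:=M_K(\mathbf e)<1<M_K(\mathbf e')=:m'$, and I would prove it by the mixing argument of Proposition~\ref{prop:M}. On the atomless space take independent uniform random permutations $\pi,\pi'$ of $\{1,\dots,K\}$ and an independent event $A$ with $Q(A)=\alpha:=(m'-1)/(m'-m)\in(0,1)$, and set $D_k:=e_{\pi(k)}\id_A+e'_{\pi'(k)}\id_{A^\complement}$ for $k=1,\dots,K$. Then $\E^Q[D_k]=\alpha m+(1-\alpha)m'=1$, so each $D_k\in\EEE_Q$; by symmetry of $F$, $F(D_1,\dots,D_K)$ equals $F(\mathbf e)$ on $A$ and $F(\mathbf e')$ on $A^\complement$, so $\E^Q[F(D_1,\dots,D_K)]=\alpha F(\mathbf e)+(1-\alpha)F(\mathbf e')$, and the e-merging property forces this to be $\le1$. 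Clearing the positive denominator gives $(m'-1)F(\mathbf e)+(1-m)F(\mathbf e')\le m'-m$, which rearranges exactly into $(F(\mathbf e)-m)(m'-1)\le(m'-F(\mathbf e'))(1-m)$, i.e.\ $g(\mathbf e)\le g(\mathbf e')$. This establishes the first assertion.

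For the characterization, observe that each $G_\lambda:=\lambda+(1-\lambda)M_K$ ($\lambda\in[0,1]$) is itself a symmetric e-merging function (a convex mixture of the e-merging functions $1$ and $M_K$). If $F$ is admissible, the first assertion gives $F\le G_\lambda$ for some $\lambda$, so $F=G_\lambda$, and then $\lambda=F(\mathbf 0)$. Conversely, suppose an e-merging function $F'$ — not necessarily symmetric — strictly dominates $G_\lambda$; its symmetrization $\mathbf e\mapsto\frac{1}{K!}\sum_\pi F'(e_{\pi(1)},\dots,e_{\pi(K)})$ (over all $K!$ permutations $\pi$) is again an e-merging function, since permutations of e-variables are e-variables, is symmetric, and still strictly dominates $G_\lambda$: at a point $\mathbf e_0$ with $F'(\mathbf e_0)>G_\lambda(\mathbf e_0)$ each summand $F'(e_{\pi(1)},\dots,e_{\pi(K)})$ is $\ge G_\lambda(\mathbf e_0)$ (as $F'\ge G_\lambda$ and $G_\lambda$ is symmetric), with strict inequality for $\pi=$ identity. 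Applying the first assertion to this symmetrization yields $G_\lambda\le G_\mu$ for some $\mu$; comparing values at $\mathbf 0$ and at $(2,\dots,2)$ forces $\mu=\lambda$, so the symmetrization is squeezed between $G_\lambda$ and $G_\lambda$, contradicting strict domination. Hence every $G_\lambda$ is admissible.

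The step I expect to be the real obstacle is the mixing argument: choosing the weight $\alpha$ so that every marginal $D_k$ is an e-variable \emph{with expectation exactly $1$} (any heavier truncation wastes budget and weakens the conclusion), and recognizing that the constraint it produces is precisely the comparison $g(\mathbf e)\le g(\mathbf e')$ that makes the interval of feasible $\lambda$ nonempty. The $g$-reformulation, the preliminary bound~(i), and the symmetrization argument should all be routine once that step is in place.
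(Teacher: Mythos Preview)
Your proof is correct and takes a genuinely different, more elementary route than the paper's.

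The paper (Appendix~\ref{app:domination}) proceeds indirectly: it first develops machinery on upper semicontinuous versions (Proposition~\ref{prop:usc}), increasing versions (Proposition~\ref{prop:inc}), and the existence of an admissible dominator for any e-merging function (Proposition~\ref{prop:dominated}); it then characterizes admissible symmetric $F$ via the special case $F(\mathbf 0)=0$ (Proposition~\ref{prop:M-0}), whose proof constructs a strictly dominating function by bumping $F$ at the origin and invokes the semicontinuity and measurable-selection results via Proposition~\ref{prop:gap}. Only at the end is the domination claim for general $F$ deduced, by combining the admissible-dominator existence with the characterization. Your approach short-circuits all of this: the $g$-reformulation turns the domination claim directly into $\sup_{m<1}g\le\inf_{m'>1}g$, and your two-point mixture (permuted $\mathbf e$ on $A$, permuted $\mathbf e'$ on $A^\complement$, with $Q(A)=(m'-1)/(m'-m)$ so that each marginal has mean exactly $1$) yields exactly this inequality in one step---no semicontinuity, no measurable selection, no Proposition~\ref{prop:dominated}. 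Your symmetrization-and-squeeze argument for the ``if'' direction of admissibility is likewise self-contained, whereas the paper invokes Proposition~\ref{prop:IPK}. What the paper's route buys is the independently useful byproduct Proposition~\ref{prop:dominated} (not restricted to symmetric $F$); what yours buys is a short, self-contained proof of the theorem using only the randomized-permutation device already present in the proof of Proposition~\ref{prop:M}.
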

The proof of Theorem \ref{thm:iff} is put in Appendix~\ref{app:domination}
as it requires several other technical results in the appendix.
Finally, we note that, for $\lambda \ne 1$,
the functions in the class \eqref{eq:convex} carry the same statistical information.

\section{Merging independent e-values}
\label{sec:ie}

In this section we consider merging functions for independent e-values.
% remember that in Section~\ref{sec:e} we fixed an atomless probability space $(\Omega,\AAA,Q)$.
An \emph{ie-merging function} of $K$ e-values is an increasing Borel function
$F:[0,\infty)^K\to[0,\infty)$
such that $F(E_1,\dots,E_K)\in\EEE_Q$ for all independent $E_1,\dots,E_K\in\EEE_Q$
in any probability space $(\Omega,\AAA,Q)$.
As for e-merging functions,
this definition is essentially equivalent to the definition involving $[0,\infty]$ rather than $[0,\infty)$
(by Proposition~\ref{prop:infty} in Appendix~\ref{app:infty},
which is still applicable in the context of merging independent e-values).
The definitions of domination, strict domination, and admissibility 
are obtained from the definitions of the previous section by replacing ``e-merging'' with ``ie-merging''.

Let $\iEEE_Q^K\subseteq\EEE_Q^K$ be the set of (component-wise) independent random vectors in $\EEE_Q^K$,
and $\mathbf{1}:=(1,\dots,1)$ be the all-1 vector in $\R^K$.
The following proposition has already been used in Section \ref{sec:e} 
(in particular, it implies that the arithmetic mean $M_K$ is an admissible e-merging function).

\begin{proposition}\label{prop:IPK}
  For an increasing Borel function $F:[0,\infty)^K\to [0,\infty)$,
  if $\E[F(\mathbf{E})]=1$ for all $\mathbf{E}\in\EEE^K_Q$ with $\E[\mathbf{E}]=\mathbf{1}$
  (resp., for all $\mathbf{E}\in\iEEE^K_Q$ with $\E[\mathbf{E}]=\mathbf{1}$),
  % if $\E[F(\mathbf{E})]=1$ for all $\mathbf{E}\in\DDD^K_Q$
  % (resp.\ for all $\mathbf{E}\in i\DDD^K_Q$),
  then $F$ is an admissible e-merging function
  (resp., an admissible ie-merging function).
\end{proposition}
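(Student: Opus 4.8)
The plan is to show two things: first, that $F$ is an e-merging function (resp.\ ie-merging function) under the stated hypothesis, and second, that it is admissible. The first part is the easy direction. Suppose $E_1,\dots,E_K\in\EEE_Q$ (resp.\ and independent). If some $E_k$ is identically $0$ or has $\E[E_k]=0$, we can perturb or directly handle it; the key observation is that by replacing each $E_k$ with $E_k/\E[E_k]$ when $\E[E_k]>0$ (and noting that scaling each argument down only decreases $F$ since $F$ is increasing, and that independence is preserved), it suffices to treat the case $\E[\mathbf E]=\mathbf 1$, where the hypothesis gives $\E[F(\mathbf E)]=1\le 1$ directly. The case $\E[E_k]=0$ means $E_k=0$ a.s., and monotonicity of $F$ lets us bound $F(\mathbf E)$ by the value with that coordinate increased to a genuine e-variable; a short limiting argument (or, more cleanly, invoking that $\E[E_k]\le 1$ always and $F$ increasing means $F(E_1,\dots,E_K)\le F(E_1/\E[E_1],\dots)$ coordinatewise wherever the denominators are positive) closes this. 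So $F$ transforms e-values into an e-value.

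For admissibility, I would argue by contradiction: suppose an e-merging function $G$ (resp.\ ie-merging function) strictly dominates $F$, so $G\ge F$ everywhere on $[0,\infty)^K$ and $G(\mathbf e^\star)>F(\mathbf e^\star)$ for some point $\mathbf e^\star$. The strategy is to construct a random vector $\mathbf E\in\EEE_Q^K$ (resp.\ $\iEEE_Q^K$) with $\E[\mathbf E]=\mathbf 1$ that places positive probability on $\mathbf e^\star$, and then derive $\E[G(\mathbf E)]>\E[F(\mathbf E)]=1$, contradicting that $G$ is an e-merging function. Concretely, for each coordinate $k$ let $E_k$ take the value $e_k^\star$ with probability $p_k$ and the value $(1-p_k e_k^\star)/(1-p_k)$ with probability $1-p_k$, choosing $p_k\in(0,1)$ small enough that the second value is nonnegative; this makes $\E[E_k]=1$. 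Taking the $E_k$ jointly independent (which in particular gives a valid construction for the ie-merging case, and a fortiori for the e-merging case) yields $\mathbf E\in\iEEE_Q^K$ with $P(\mathbf E=\mathbf e^\star)=\prod_k p_k>0$. Then
\[
  \E[G(\mathbf E)] - \E[F(\mathbf E)]
  = \E\bigl[(G-F)(\mathbf E)\bigr]
  \ge (G-F)(\mathbf e^\star)\, P(\mathbf E=\mathbf e^\star)
  > 0,
\]
using $G-F\ge 0$ everywhere for the inequality, so $\E[G(\mathbf E)]>1$, which is the desired contradiction.

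The main obstacle I anticipate is handling the reduction to the case $\E[\mathbf E]=\mathbf 1$ cleanly, especially the boundary behavior when some $\E[E_k]$ equals $0$ or when the target point $\mathbf e^\star$ has a coordinate that is awkwardly large (so that no choice of $p_k$ makes $(1-p_k e_k^\star)/(1-p_k)$ nonnegative --- but in fact $e_k^\star<\infty$ and $p_k\to 0$ forces $(1-p_k e_k^\star)/(1-p_k)\to 1>0$, so small $p_k$ always works). The other point requiring a little care is that the construction must respect independence to cover the ie-merging assertion; since we build the coordinates as independent two-point variables, this is automatic, and the same construction serves both claims because independent e-variables are in particular e-variables. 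A remark worth including is that this proposition is exactly what makes $M_K$ admissible: $M_K$ satisfies $\E[M_K(\mathbf E)]=\frac1K\sum_k\E[E_k]=1$ whenever $\E[\mathbf E]=\mathbf 1$, so Proposition~\ref{prop:IPK} applies with no further work.
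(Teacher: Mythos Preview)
Your proposal is correct and follows essentially the same approach as the paper: construct an independent vector $\mathbf{E}\in\iEEE_Q^K$ with $\E[\mathbf{E}]=\mathbf{1}$ placing positive mass at the point $\mathbf{e}^\star$ where $G>F$, and use $\E[F(\mathbf{E})]=1$ together with $G\ge F$ to force $\E[G(\mathbf{E})]>1$. The paper simply calls the first part (that $F$ is an e-merging function) ``obvious'' rather than spelling out your scaling argument, and your phrase ``scaling each argument down'' is backwards (dividing by $\E[E_k]\le 1$ scales \emph{up}), but the inequality $F(\mathbf{E})\le F(E_1/\E[E_1],\dots,E_K/\E[E_K])$ you write afterward is the right one.
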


\begin{proof}
  It is obvious that $F$ is an e-merging function (resp., ie-merging function).
  Next we show that $F$ is admissible.
  Suppose for the purpose of contradiction that there exists an ie-merging function $G$ such that $G\ge F$ and 
  $
    G(e_1,\dots,e_K)>F(e_1,\dots,e_K)
  $
  for some $(e_1,\dots,e_K)\in [0,\infty)^K$.
  Take $(E_1,\dots,E_K)\in\iEEE^K_Q$ with $\E[(E_1,\dots,E_K)]=\mathbf{1}$
  such that $Q((E_1,\dots,E_K)=(e_1,\dots,e_K))>0$.
  Such a random vector is easy to construct by considering any distribution with a positive mass on each of $e_1,\dots,e_K$.
  Then we have
  \[
    Q(G(E_1,\dots,E_K) > F(E_1,\dots,E_K)) > 0,
  \]
  which implies
  \[
    \E[G(E_1,\dots,E_K)] > \E[G(E_1,\dots,E_K)] = 1,
  \]
  contradicting the assumption that $G$ is an ie-merging function.
  Therefore, no ie-merging function strictly dominates $F$.
  Noting that an e-merging function is also an ie-merging function,
  admissibility of $F$ is guaranteed under both settings.
\end{proof}

If $E_1,\dots,E_K$ are independent e-variables, their product $E_1\dots E_K$ will also be an e-variable.
This is the analogue of Fisher's [\citeyear{Fisher:1932}] method for p-values
(according to the rough relation $e\sim1/p$ mentioned in Remark~\ref{rem:rough};
Fisher's method is discussed at the end of Section~\ref{sec:p}).
The ie-merging function
\begin{equation}\label{eq:product}
  (e_1,\dots,e_K)
  \mapsto
  e_1\dots e_K
\end{equation}
is admissible by Proposition \ref{prop:IPK}.
It will be referred to as the \emph{product} (or \emph{multiplication})
ie-merging function.
The betting interpretation of \eqref{eq:product} is obvious:
it is the result of $K$ successive bets using the e-variables $E_1,\dots,E_K$
(starting with initial capital 1 and betting the full current capital $E_1\dots E_{k-1}$ on each $E_k$).

More generally, we can see that the U-statistics
\begin{equation}\label{eq:U}
  U_n(e_1,\dots,e_K)
  :=
  \frac{1}{\binom{K}{n}}
  \sum_{\{k_1,\dots,k_n\}\subseteq\{1,\dots,K\}}
  e_{k_1} \dots e_{k_n},
  \quad
  n\in\{0,1,\dots,K\},
\end{equation}
and their convex mixtures are ie-merging functions.
Notice that this class includes product (for $n=K$),
arithmetic average $M_K$ (for $n=1$),
and constant 1 (for $n=0$).
Proposition \ref{prop:IPK} implies that the U-statistics~\eqref{eq:U}
and their convex mixtures are admissible ie-merging functions.

The betting interpretation of a U-statistic~\eqref{eq:U}
or a convex mixture of U-statistics is implied by the betting interpretation
of each component $e_{k_1} \dots e_{k_n}$.
Assuming that $k_1,\dots,k_n$ are sorted in the increasing order,
$e_{k_1} \dots e_{k_n}$ is the result of $n$ successive bets using the e-variables $E_{k_1},\dots,E_{k_n}$;
and a convex mixture of bets corresponds to investing the appropriate fractions of the initial capital into those bets.

Let us now establish a very weak counterpart of Proposition~\ref{prop:M}
for independent e-values
(on the positive side it will not require the assumption of symmetry).
An ie-merging function $F$ \emph{weakly dominates} an ie-merging function $G$ if,
for all $e_1,\dots,e_K$,
\[
  (e_1,\dots,e_K)\in[1,\infty)^K
  \Longrightarrow
  F(e_1,\dots,e_K)\ge G(e_1,\dots,e_K).
\]
In other words, we require that $F$ is not worse than $G$
if all input e-values are useful
(and this requirement is weak because, especially for a large $K$,
we are also interested in the case where some of the input e-values are useless).

\begin{proposition}  \label{prop:M-i}
  The product $(e_1,\dots,e_K)\mapsto e_1\dots e_K$ weakly dominates any ie-merging function.
\end{proposition}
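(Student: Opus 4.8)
The plan is to prove the equivalent statement that for an arbitrary ie-merging function $G$ and an arbitrary $(e_1,\dots,e_K)\in[1,\infty)^K$ one has $G(e_1,\dots,e_K)\le e_1\cdots e_K$. The idea is to feed $G$ a carefully chosen independent tuple of e-variables that ``hits'' the point $(e_1,\dots,e_K)$ with a controlled, positive probability, and then read off the bound from $\E[G]\le 1$.

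First I would fix the probability space to be a product of $K$ two-point spaces, and define, for each $k\in\{1,\dots,K\}$, the e-variable $E_k$ that equals $e_k$ with probability $1/e_k$ and $0$ with probability $1-1/e_k$. This is a legitimate e-variable precisely because $e_k\ge 1$ guarantees $1/e_k\in(0,1]$, and then $\E[E_k]=e_k\cdot(1/e_k)=1$. By construction $E_1,\dots,E_K$ are independent, so $G(E_1,\dots,E_K)\in\EEE_Q$.

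Next, since $G$ is increasing and takes values in $[0,\infty)$, we have $G(E_1,\dots,E_K)\ge G(\mathbf{0})\ge 0$ everywhere, while on the event $\{E_1=e_1,\dots,E_K=e_K\}$, whose probability is $\prod_{k=1}^K (1/e_k)=1/(e_1\cdots e_K)>0$, we have $G(E_1,\dots,E_K)=G(e_1,\dots,e_K)$. Therefore
\[
  1 \ge \E[G(E_1,\dots,E_K)] \ge \frac{G(e_1,\dots,e_K)}{e_1\cdots e_K},
\]
which rearranges to $G(e_1,\dots,e_K)\le e_1\cdots e_K$, i.e.\ the product weakly dominates $G$.

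There is essentially no obstacle in this argument; the only point requiring attention is the restriction $(e_1,\dots,e_K)\in[1,\infty)^K$, which is exactly what makes each $E_k$ above a valid e-variable. This is also why only \emph{weak} domination can hold: when some $e_k<1$ the two-point construction breaks down, and indeed the product does not dominate, for instance, the arithmetic mean $M_K$ on all of $[0,\infty)^K$.
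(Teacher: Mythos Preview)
Your proof is correct and is essentially the same as the paper's: both construct independent two-point e-variables $E_k\in\{0,e_k\}$ with $Q(E_k=e_k)=1/e_k$ and use $\E[G(E_1,\dots,E_K)]\ge G(e_1,\dots,e_K)/(e_1\cdots e_K)$ to force the bound. The only cosmetic difference is that the paper phrases it as a contradiction while you give the direct inequality.
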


\begin{proof}
  Indeed, suppose that there exists $(e_1,\dots,e_K)\in[1,\infty)^K$ such that
  \[F(e_1,\dots,e_K)>e_1\dots e_K.\]
  Let $E_1,\dots,E_K$ be independent random variables
  such that each $E_k$ for $k\in\{1,\dots,K\}$ takes values in the two-element set $\{0,e_k\}$
  and $E_k=e_k$ with probability $1/e_k$.
  Then each $E_k$ is an e-variable but
  \begin{align*}
    \E[F(E_1,\dots,E_K)]
    &\ge
    F(e_1,\dots,e_K)
    Q(E_1=e_1,\dots,E_K=e_K)\\
    &>
    e_1\dots e_K
    (1/e_1)\dots(1/e_K)
    =
    1,
  \end{align*}
  which contradicts $F$ being an ie-merging function.
\end{proof}

\begin{remark} % \label{rem:for-R3}
  A natural question is whether the convex mixtures of \eqref{eq:U} form a complete class.
  They do not:
  Proposition~\ref{prop:IPK} implies that
  \[
    f(e_1,e_2)
    :=
    \frac12
    \left(
      \frac{e_1}{1 + e_1}
      +
      \frac{e_2}{1 + e_2}
    \right)
    \left(
      1 + e_1 e_2
    \right)
  \]
  is an admissible ie-merging function,
  and it is easy to check that it is different from any convex mixture of \eqref{eq:U}.
\end{remark}

\subsection*{Testing with martingales}

The assumption of the independence of e-variables $E_1,\dots,E_K$
is not necessary for the product $E_1\dots E_K$ to be an e-variable.
Below, we say that the e-variables $E_1,\dots,E_K$ are \emph{sequential}
if  $\E[E_k\mid E_1,\dots,E_{k-1}]\le1$ almost surely for all $k\in\{1,\dots,K\}$.
Equivalently, the sequence of the partial products $(E_1\dots E_k)_{k=0,1,\dots,K}$ is a supermartingale
in the filtration generated by $E_1,\dots,E_K$
(or a \emph{test supermartingale},
in the terminology of \citet{Shafer/etal:2011}, \citet{Howard/etal:arXiv1810}, and \citet{Grunwald/etal:2019},
meaning a nonnegative supermartingale with initial value 1).
A possible interpretation of this test supermartingale
is that the e-values $e_1,e_2,\dots$ are obtained by laboratories $1,2,\dots$ in this order,
and laboratory $k$ makes sure that its result $e_k$ is a valid e-value given the previous results $e_1,\dots,e_{k-1}$.
The test supermartingale is a \emph{test martingale}
if $\E[E_k\mid E_1,\dots,E_{k-1}]=1$ almost surely for all $k$
(intuitively, it is not wasteful).

It is straightforward to check that all convex mixtures of \eqref{eq:U} (including the product function)
produce a valid e-value from sequential e-values;
we will say that they are \emph{se-merging functions}.
On the other hand, independent e-variables are sequential,
and hence se-merging functions form a subset of ie-merging functions.
In the class of se-merging functions, the convex mixtures of \eqref{eq:U} are admissible,
as they are admissible in the larger class of ie-merging functions (by Proposition \ref{prop:IPK}).
For the same reason (and by Proposition \ref{prop:M-i}),
the product function in \eqref{eq:product} weakly dominates every other se-merging function.
This gives a (weak) theoretical justification for us to use the product function
as a canonical merging method in Sections \ref{sec:multiple} and \ref{sec:experiments} for e-values as long as they are sequential.
Finally, we note that it suffices for $E_1,\dots,E_K$ to be sequential in any order for these merging methods
(such as Algorithm \ref{alg:BH-i} in Section \ref{sec:multiple})
to be valid.

\section{Application to testing multiple hypotheses}
\label{sec:multiple}

As in \citet{Vovk/Wang:2019},
we will apply results for multiple testing of a single hypothesis
(combining e-values in the context of Sections~\ref{sec:e} and~\ref{sec:ie})
to testing multiple hypotheses.
As we explain in Appendix~\ref{app:survey} (Section~\ref{subsec:MHT}),
our algorithms just spell out the application of the closure principle
\citep{Marcus/etal:1976,Goeman/Solari:2011},
but our exposition in this section will be self-contained.

Let $(\Omega,\AAA)$ be our sample space (formally, a measurable space),
and $\mathfrak{P}(\Omega)$ be the family of all probability measures on it.
A \emph{composite null hypothesis} is a set $H\subseteq\mathfrak{P}(\Omega)$ of probability measures
on the sample space.
We say that $E$ is an \emph{e-variable} w.r.\ to a composite null hypothesis $H$
if $\E^Q[E_k]\le1$ for any $Q\in H_k$.

\begin{algorithm}[bt]
  \caption{Adjusting e-values for multiple hypothesis testing}
  \label{alg:BH}
  \begin{algorithmic}[1]  % [1] means numbering each line (optional)
    \Require
      A sequence of e-values $e_1,\dots,e_K$.
    \State Find a permutation $\pi:\{1,\dots,K\}\to\{1,\dots,K\}$ such that $e_{\pi(1)}\le\dots\le e_{\pi(K)}$.
    \State Set $e_{(k)}:=e_{\pi(k)}$, $k\in\{1,\dots,K\}$ (these are the \emph{order statistics}).
    \State $S_0:=0$
    \For{$i=1,\dots,K$}
      \State $S_i := S_{i-1} + e_{(i)}$
    \EndFor
    \For{$k=1,\dots,K$}
      \State $e^*_{\pi(k)}:=e_{\pi(k)}$
      \For{$i=1,\dots,k-1$}
        \State $e := \frac{e_{\pi(k)}+S_i}{i+1}$
        \If{$e < e^*_{\pi(k)}$}
          \State $e^*_{\pi(k)} := e$
        \EndIf
      \EndFor
    \EndFor
  \end{algorithmic}
\end{algorithm}

In multiple hypothesis testing
we are given a set of composite null hypotheses $H_k$, $k=1,\dots,K$.
Suppose that, for each $k$, we are also given an e-variable $E_k$ w.r.\ to $H_k$.
% $\E^Q[E_k]\le1$ for any $Q\in H_k$.
Our multiple testing procedure is presented as Algorithm~\ref{alg:BH}.
The procedure adjusts the e-values $e_1,\dots,e_K$,
perhaps obtained in $K$ experiments (not necessarily independent),
to new e-values $e^*_1,\dots,e^*_K$;
the adjustment is downward in that $e^*_k\le e_k$ for all $k$.
% For simplicity we assume that the base e-values $e_k$ are all different.
Applying the procedure to the e-values $e_1,\dots,e_K$ produced by the e-variables $E_1,\dots,E_K$,
we obtain extended random variables $E^*_1,\dots,E^*_K$ taking values $e^*_1,\dots,e^*_K$.
The output $E^*_1,\dots,E^*_K$ of Algorithm~\ref{alg:BH} satisfies a property of validity
which we will refer to as \emph{family-wise validity} (FWV);
in Section~\ref{subsec:MHT} we will explain its analogy with the standard family-wise error rate (FWER).

A \emph{conditional e-variable} is a family of extended nonnegative random variables $E_Q$,
$Q\in\mathfrak{P}(\Omega)$,
that satisfies
\[
  \forall Q\in\mathfrak{P}(\Omega):
  \E^Q[E_Q]
  \le
  1
\]
(i.e., each $E_Q$ is in $\EEE_Q$).
We regard it as a system of bets against each potential data-generating distribution $Q$.

Extended random variables $E^*_1,\dots,E^*_K$ taking values in $[0,\infty]$
are \emph{family-wise valid} (\emph{FWV}) for testing $H_1,\dots,H_K$
if there exists a conditional e-variable $(E_Q)_{Q\in\mathfrak{P}(\Omega)}$ such that
\begin{equation}\label{eq:FWV}
  \forall k\in\{1,\dots,K\} \;
  \forall Q\in H_k:
  E_Q \ge E^*_k
\end{equation}
(where $E_Q \ge E^*_k$ means, as usual, that $E_Q(\omega) \ge E^*_k(\omega)$ for all $\omega\in\Omega$).
We can say that such $(E_Q)_{Q\in\mathfrak{P}(\Omega)}$ \emph{witnesses} the FWV property
of $E^*_1,\dots,E^*_K$.

The interpretation of family-wise validity is based on our interpretation of e-values.
Suppose we observe an outcome $\omega\in\Omega$.
If $E_Q(\omega)$ is very large, we may reject $Q$ as the data-generating distribution.
Therefore, if $E^*_k(\omega)$ is very large, we may reject the whole of $H_k$ (i.e., each $Q\in H_k$).
In betting terms, we have made at least $\$E^*_k(\omega)$ risking at most \$1 when gambling against any $Q\in H_k$.

Notice that we can rewrite \eqref{eq:FWV} as
\begin{equation*}
  \forall Q\in\mathfrak{P}(\Omega):
  \E^Q
  \left[
    \max_{k
    % \in\{1,\dots,K\}
    : Q\in H_k}
    E^*_k
  \right]
  \le
  1.
\end{equation*}
In other words, we require joint validity of the e-variables $E^*_k$.

We first state the validity of Algorithm \ref{alg:BH}
(as well as Algorithm \ref{alg:BH-i} given below),
and our justification  follows.

\begin{theorem}
  Algorithms~\ref{alg:BH} and~\ref{alg:BH-i} are family-wise valid.
\end{theorem}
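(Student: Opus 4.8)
The plan is to exhibit, for each algorithm, an explicit conditional e-variable $(E_Q)_{Q\in\mathfrak P(\Omega)}$ witnessing the FWV property of the output $E^*_1,\dots,E^*_K$, i.e.\ satisfying \eqref{eq:FWV}. Fix an outcome-generating $Q\in\mathfrak P(\Omega)$ and let $S_Q:=\{k : Q\in H_k\}$ be the indices of the hypotheses that contain $Q$. By the reformulation of \eqref{eq:FWV} given just before the theorem, what we must show is that the candidate $E_Q$ dominates $\max_{k\in S_Q}E^*_k$ pointwise and has $\E^Q[E_Q]\le 1$. The natural candidate is $E_Q:=$ (a merge of the e-variables $\{E_k : k\in S_Q\}$): for Algorithm~\ref{alg:BH} this should be the arithmetic mean $\frac{1}{|S_Q|}\sum_{k\in S_Q}E_k$, and for Algorithm~\ref{alg:BH-i} (which I have not seen but which, by analogy with Section~\ref{sec:ie}, must use the product) it should be $\prod_{k\in S_Q}E_k$. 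Since each $E_k$ is an e-variable w.r.t.\ $H_k$ and $Q\in H_k$ for $k\in S_Q$, Proposition~\ref{prop:M} / the e-merging property of $M_K$ (resp.\ the ie-merging or se-merging property of the product, together with the remark at the end of Section~\ref{sec:ie} that sequentiality in any order suffices) gives $\E^Q[E_Q]\le 1$ immediately.

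The real content is therefore the pointwise domination $E_Q(\omega)\ge E^*_k(\omega)$ for every $k\in S_Q$, and this is where the combinatorics of the algorithm enters. Unwinding Algorithm~\ref{alg:BH}: for a fixed $k$, the inner loop sets $e^*_k$ to the minimum of $e_k$ itself and the quantities $\frac{e_k+S_i}{i+1}$, where $S_i$ is the sum of the $i$ smallest among $e_1,\dots,e_K$. The first step is the combinatorial lemma that for \emph{any} subset $T\ni k$ of $\{1,\dots,K\}$ of size $m:=|T|$, the average $\frac{1}{m}\sum_{j\in T}e_j$ is at least $e^*_k$. To see this, note $\frac{1}{m}\sum_{j\in T}e_j\ge\frac{1}{m}\bigl(e_k+(\text{sum of the }m-1\text{ smallest of }\{e_j:j\ne k\})\bigr)$, and the sum of the $m-1$ smallest of $\{e_j : j\ne k\}$ is at least $S_{m-1}$ minus a correction if $e_k$ is itself among the $m-1$ overall smallest — here one has to be a little careful, but in all cases this lower bound is $\ge\frac{e_k+S_{i}}{i+1}$ for the appropriate $i\le m-1$, which the algorithm has considered, hence $\ge e^*_k$. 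Applying this with $T=S_Q$ yields $E_Q(\omega)=\frac{1}{|S_Q|}\sum_{j\in S_Q}e_j\ge e^*_k=E^*_k(\omega)$ for each $k\in S_Q$, as required. For Algorithm~\ref{alg:BH-i} the analogous combinatorial fact should be that $\prod_{j\in T}e_j\ge e^*_k$ for every $T\ni k$, which is even easier since all $e_j\ge$ the relevant thresholds and one is taking products of a superset of the factors the algorithm minimized over (modulo the precise form of that algorithm).

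The step I expect to be the main obstacle is the bookkeeping in the combinatorial lemma above: precisely matching the quantity $\min_i\frac{e_k+S_i}{i+1}$ computed by the algorithm against $\min_{T\ni k}\frac{1}{|T|}\sum_{j\in T}e_j$, and in particular handling the boundary issue of whether $e_k$ is or is not among the $i$ smallest values (which affects whether $S_i$ double-counts $e_k$). One clean way to organize this is to observe that for fixed cardinality $m$, the subset $T\ni k$ minimizing $\frac{1}{m}\sum_{j\in T}e_j$ consists of $e_k$ together with the $m-1$ smallest of the remaining values, and then to check that this minimum equals $\frac{e_k+S_{m-1}'}{m}$ where $S'_{m-1}$ is the sum of the $m-1$ smallest of $\{e_j : j\ne k\}$; one then verifies that ranging $i$ from $0$ to $k-1$ in the algorithm (applied to the sorted sequence) sweeps out exactly these values, so the algorithm's $e^*_k$ is the minimum over all $m$. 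Once this identity is in hand, the rest is a two-line application of the merging results from Sections~\ref{sec:e} and~\ref{sec:ie}. I would also remark that this argument is exactly the closure principle specialized to the merging functions $M_K$ and the product, which is the conceptual explanation promised in the text.
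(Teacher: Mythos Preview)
Your approach is correct and coincides with the paper's: it defines the witnessing conditional e-variable as $E_Q=\frac{1}{|I_Q|}\sum_{k\in I_Q}E_k$ (respectively $\prod_{k\in I_Q}E_k$), defines $E^*_k=\min_{I\ni k}\frac{1}{|I|}\sum_{i\in I}E_i$ (respectively $\min_{I\ni k}\prod_{i\in I}E_i$), and then checks that the algorithm implements this minimum, exactly as you outline; the paper also identifies this as the closure principle.

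Two small corrections to your bookkeeping sketch. First, the sum of the $m-1$ smallest of $\{e_j:j\ne k\}$ is at least $S_{m-1}$ (not ``$S_{m-1}$ minus a correction''): if $e_k$ is among the $m-1$ overall smallest, excluding it replaces $e_k$ by the larger $e_{(m)}$. Second, since the algorithm works with the sorted index $\pi(k)$ and lets $i$ range only over $0,\dots,k-1$, it sweeps out the minimizers only for subset sizes $m\le k$; for $m>k$ the minimizing set of size $m$ containing $\pi(k)$ is $\{\pi(1),\dots,\pi(m)\}$ with average $S_m/m$, and one needs the easy observation that $e_{(m)}\ge e_{(k)}\ge S_k/k$ forces $S_m/m\ge S_k/k$, so these larger $m$ never achieve the minimum. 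With these two points in hand your combinatorial lemma goes through cleanly.
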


Let us check that the output $E^*_1,\dots,E^*_K$ of Algorithm~\ref{alg:BH} is FWV.
% family-wise valid
For $I\subseteq\{1,\dots,K\}$,
the composite hypothesis $H_I$ is defined by
\begin{equation}\label{eq:H}
  H_I
  :=
  \left(
    \bigcap_{k\in I} H_k
  \right)
  \bigcap
  \left(
    \bigcap_{k\in\{1,\dots,K\}\setminus I} H^\complement_k
  \right),
\end{equation}
where $H^\complement_k$ is the complement of $H_k$.
The conditional e-variable witnessing that $E^*_1,\dots,E^*_K$ are FWV % family-wise valid
is the arithmetic mean
\begin{equation}\label{eq:Q}
  E_Q
  :=
  \frac{1}{\left|I_Q\right|}
  \sum_{k\in I_Q}
  E_k,
\end{equation}
where $I_Q:=\{k\mid Q\in H_k\}$ and $E_Q$ is defined arbitrarily (say, as 1) when $I_Q=\emptyset$.
The optimal adjusted e-variables $E'_k$ can be defined as
\begin{equation}\label{eq:E-optimal}
  E'_k
  :=
  \min_{Q\in H_k}
  E_Q
  \ge
  \min_{I\subseteq\{1,\dots,K\}:k\in I}
  \frac{1}{\left|I\right|}
  \sum_{i\in I}
  E_i,
\end{equation}
but for computational efficiency we use the conservative definition
\begin{equation}\label{eq:E*}
  E^*_k
  :=
  \min_{I\subseteq\{1,\dots,K\}:k\in I}
  \frac{1}{\left|I\right|}
  \sum_{i\in I}
  E_i.
\end{equation}

\begin{remark}\label{rem:Shaffer}
  The inequality ``$\ge$'' in \eqref{eq:E-optimal} holds as the equality ``$=$''
  if all the intersections~\eqref{eq:H} are non-empty.
  If some of these intersections are empty, we can have a strict inequality.
  Algorithm~\ref{alg:BH} implements the definition~\eqref{eq:E*}.
  Therefore, it is valid regardless of whether some of the intersections~\eqref{eq:H} are empty;
  however, if they are, it may be possible to improve the adjusted e-values.
  According to Holm's [\citeyear{Holm:1979}] terminology, we allow ``free combinations''.
  \citet{Shaffer:1986} pioneered methods that take account of the logical relations between the base hypotheses $H_k$.
\end{remark}

To obtain Algorithm~\ref{alg:BH}, we rewrite the definitions \eqref{eq:E*}
% for $k\in\{1,\dots,K\}$
as
\begin{align*}
  E^*_{\pi(k)}
  &=
  \min_{i\in\{0,\dots,k-1\}}
  \frac{E_{\pi(k)}+E_{(1)}+\dots+E_{(i)}}{i+1}\\
  &=
  \min_{i\in\{1,\dots,k-1\}}
  \frac{E_{\pi(k)}+E_{(1)}+\dots+E_{(i)}}{i+1}
\end{align*}
for $k\in\{1,\dots,K\}$,
where $\pi$ is the ordering permutation and $E_{(j)}=E_{\pi(j)}$ is the $j$th order statistic among $E_1,\dots,E_K$,
as in Algorithm~\ref{alg:BH}.
In lines 3--5 of Algorithm~\ref{alg:BH} we precompute the sums
\[
  S_i
  :=
  e_{(1)}+\dots+e_{(i)},
  \qquad
  i=1,\dots,K,
\]
in lines 8--9 we compute
\[
  e_{k,i}
  :=
  \frac{e_{\pi(k)}+e_{(1)}+\dots+e_{(i)}}{i+1}
\]
for $i=1,\dots,k-1$,
and as result of executing lines 6--11 we will have
\[
  e^*_{\pi(k)}
  =
  \min_{i\in\{1,\dots,k-1\}}
  e_{k,i}
  =
  \min_{i\in\{1,\dots,k-1\}}
  \frac{e_{\pi(k)}+e_{(1)}+\dots+e_{(i)}}{i+1},
\]
which shows that Algorithm~\ref{alg:BH} is an implementation of \eqref{eq:E*}.

The computational complexity of Algorithm~\ref{alg:BH} is $O(K^2)$.

\begin{algorithm}[bt]
  \caption{Adjusting sequential e-values for multiple hypothesis testing}
  \label{alg:BH-i}
  \begin{algorithmic}[1]  % [1] means numbering each line (optional)
    \Require
      A sequence of e-values $e_1,\dots,e_K$.
    % \State Let $e_{(k)}$ be the order statistics, as in Algorithm~\ref{alg:BH}.
    \State Let $a$ be the product of all $e_{k}<1$, $k=1,\dots,K$ (and $a:=1$ if there are no such $k$).
    \For{$k=1,\dots,K$}
      \State $e^*_k:=a e_k$
      % \State $e^*_k:=a\max(e_k,1)$
    \EndFor
  \end{algorithmic}
\end{algorithm}

In the case of sequential  e-variables,
we have Algorithm~\ref{alg:BH-i}.
This algorithm assumes that,
under any $Q\in\mathfrak{P}(\Omega)$,
the base e-variables $E_k$, $k\in I_Q$, are sequential
(remember that $I_Q$ is defined by \eqref{eq:Q} and that independence implies being sequential).
The conditional e-variable witnessing that the output of Algorithm~\ref{alg:BH-i} is FWV
is the one given by the product ie-merging function,
\[
  E_Q
  :=
  \prod_{k\in I_Q}
  E_k,
\]
where the adjusted e-variables are defined by
\begin{equation}\label{eq:E*-i}
  E^*_k
  :=
  \min_{I\subseteq\{1,\dots,K\}:k\in I}
  \prod_{i\in I}
  E_i.
\end{equation}
A remark similar to Remark~\ref{rem:Shaffer} can also be made about Algorithm~\ref{alg:BH-i}.
The computational complexity of Algorithm~\ref{alg:BH-i} is $O(K)$
(unusually, the algorithm does not require sorting the base e-values).

\section{Merging p-values and comparisons}
\label{sec:p}

Merging p-values is a much more difficult topic than merging e-values,
but it is very well explored.
First we review merging p-values without any assumptions,
and then we move on to merging independent p-values.

A \emph{p-merging function} of $K$ p-values is an increasing Borel function $F:[0,1]^K\to[0,1]$
such that $F(P_1,\dots,P_K)\in\PPP_Q$ whenever $P_1,\dots,P_K\in\PPP_Q$.

For merging p-values without the assumption of independence,
we will concentrate on two natural families of p-merging functions.
The older family is the one introduced by \citet{Ruger:1978},
and the newer one was introduced in our paper \citet{Vovk/Wang:2019}.
R\"uger's family is parameterized by $k\in\{1,\dots,K\}$,
and its $k$th element is the function
(shown by \citet{Ruger:1978} to be a p-merging function)
\begin{equation}\label{eq:Ruger}
  (p_1,\dots,p_K)
  \mapsto
  \frac{K}{k}
  p_{(k)}
  \wedge
  1,
\end{equation}
where $p_{(k)}:=p_{\pi(k)}$ and $\pi$ is a permutation of $\{1,\dots,K\}$
ordering the p-values in the ascending order: $p_{\pi(1)}\le\dots\le p_{\pi(K)}$.
The other family \citep{Vovk/Wang:2019},
which we will refer to as the \emph{$M$-family},
is parameterized by $r\in[-\infty,\infty]$,
and its element with index $r$ has the form $a_{r,K}M_{r,K}\wedge1$,
where
\begin{equation}\label{eq:merge1}
  M_{r,K}(p_1,\dots,p_K)
  :=
  \left(
    \frac{p_1^r + \dots + p_K^r }{K}
  \right)^{1/r}
\end{equation}
and $a_{r,K}\ge1$ is a suitable constant.
We also define $M_{r,K}$ for $r\in\{0,\infty,-\infty\}$ as the limiting cases of \eqref{eq:merge1},
which correspond to the geometric average, the maximum, and the minimum, respectively. 

The initial and final elements of both families coincide:
the initial element is the Bonferroni p-merging function
\begin{equation}\label{eq:Bonferroni}
  (p_1,\dots,p_K)
  \mapsto
  K\min(p_1,\dots,p_K)
  \wedge
  1,
\end{equation}
and the final element is the maximum p-merging function
\begin{equation*} % \label{eq:max}
  (p_1,\dots,p_K)
  \mapsto
  \max(p_1,\dots,p_K).
\end{equation*}

Similarly to the case of e-merging functions,
we say that a p-merging function $F$ \emph{dominates} a p-merging function $G$
if $F\le G$.
The domination is \emph{strict} if, in addition, $F(\mathbf{p})<G(\mathbf{p})$
for at least one $\mathbf{p}\in[0,1]^K$.
We say that a p-merging function $F$ is \emph{admissible}
if it is not strictly dominated by any p-merging function~$G$.

The domination structure of p-merging functions
is much richer than that of e-merging functions.
The maximum p-merging function is clearly inadmissible
(e.g., $(p_1,\dots,p_K) \mapsto \max(p_1,\dots,p_K)$
is strictly dominated by $(p_1,\dots,p_K) \mapsto p_1$)
while the Bonferroni p-merging function is admissible,
as the following proposition shows.

\begin{proposition} % \label{prop:Bonferroni}
  The Bonferroni p-merging function \eqref{eq:Bonferroni} is admissible.
\end{proposition}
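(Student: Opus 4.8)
The plan is to show that the Bonferroni merging function $B(\mathbf p) := K\min(p_1,\dots,p_K)\wedge 1$ cannot be strictly dominated. Suppose a p-merging function $G$ satisfies $G\le B$ everywhere, and suppose $G(\mathbf p^\circ) < B(\mathbf p^\circ)$ for some point $\mathbf p^\circ\in[0,1]^K$. I want to derive a contradiction with the requirement that $G$ transforms p-variables into p-variables. The natural strategy is to build, for a suitable target level $\epsilon$, a collection of p-variables $P_1,\dots,P_K$ such that $\{G(P_1,\dots,P_K)\le\epsilon\}$ has $Q$-probability strictly greater than $\epsilon$.

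First I would dispose of the trivial part of the domain: if $B(\mathbf p^\circ)=1$, then strict domination at $\mathbf p^\circ$ is impossible unless $G(\mathbf p^\circ)<1$; so the interesting case is $B(\mathbf p^\circ)=K\min_k p^\circ_k<1$, i.e.\ all coordinates satisfy $p_k^\circ < 1/K$ and $\min_k p_k^\circ = c/K$ for some $c\in(0,1)$. Write $\epsilon$ for a level slightly below $c$; concretely I would aim to work at the exact level of the smallest coordinate. The key construction: on $([0,1],\text{Borel},\text{Leb})$ partition $[0,1]$ (or $[0,1/K)$) into $K$ disjoint subintervals $A_1,\dots,A_K$ each of length $1/K$, and define $P_k$ so that on $A_j$ it is a rescaled uniform that makes $P_k$ marginally uniform on $[0,1]$, while being arranged so that exactly one coordinate is ``small'' at a time. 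The classical hard-case distribution for Bonferroni is: with probability $1/K$ each, exactly coordinate $k$ is uniform on $[0,1/K)$ (hence below any $\epsilon<1/K$ with the right probability) and the others are pinned to values making $\mathbf P$ equal to (a scaled copy of) $\mathbf p^\circ$ on that event. Then $B(\mathbf P)$ attains its extreme (equals $\epsilon\cdot K\cdot$ something) with probability exactly $\epsilon$ on the union of these events, and since $G\le B$ with strict inequality on the relevant configuration, $Q(G(\mathbf P)\le\epsilon')$ can be pushed above $\epsilon'$ for a suitable $\epsilon'$. So the main steps are: (i) reduce to $\min_k p_k^\circ<1/K$; (ii) by monotonicity of $G$, reduce to the case where $\mathbf p^\circ$ is the constant vector $(c/K,\dots,c/K)$ or is dominated coordinatewise by such a vector on which $G$ is still strictly below $B$; (iii) construct the $K$-atom mixture of p-variables realising each cyclic shift of this configuration; (iv) compute $Q(G(\mathbf P)\le \delta)$ for $\delta$ just above $G(\mathbf p^\circ)$ and show it exceeds $\delta$, contradicting that $G(\mathbf P)\in\PPP_Q$.

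The main obstacle I anticipate is step (ii)–(iii): turning the single point $\mathbf p^\circ$ of strict domination into a genuine violation of the p-variable property. Strict inequality at one point is fragile — $G$ could be only infinitesimally smaller there, and a single point has Lebesgue measure zero, so I cannot simply read off a probability. The fix is to use monotonicity of $G$ twice: since $G$ is increasing and $G(\mathbf p^\circ)<B(\mathbf p^\circ)=K\min_k p^\circ_k$, pick $\delta$ with $G(\mathbf p^\circ)<\delta< K\min_k p^\circ_k$; by monotonicity $G\le\delta$ on the whole box $[0,\mathbf p^\circ]:=\prod_k[0,p_k^\circ]$, which has positive measure. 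Then I need the mixture construction to place probability strictly greater than $\delta$ on an event contained in $\{\mathbf P\le \mathbf p^\circ\text{ coordinatewise}\}$ — this is where the ``one small coordinate at a time'' trick earns its keep: I can allocate, for each $k$, an event of probability slightly more than $\delta/K$ on which $P_k$ ranges over $[0,p^\circ_k]$ while all other $P_j$ sit at small fixed values $\le p_j^\circ$, keeping each $P_j$ marginally a valid p-variable by spreading its mass appropriately over the complement. Summing over $k$ gives total probability just above $\delta$ on $\{G(\mathbf P)\le\delta\}$, the desired contradiction. I would double-check that the marginal of each $P_k$ is stochastically dominated by (or equal to) uniform — this is the routine bookkeeping — and that the construction needs only an atomless probability space, which is harmless by the conventions in the paper.

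Alternatively, and perhaps more cleanly, I would phrase step (iii) via the known fact (essentially Rüger's argument, or a union-bound tightness example) that for every $\epsilon<1/K$ there exist p-variables $P_1,\dots,P_K$ with $Q(\forall k:\ P_k\le \epsilon/ (\text{const}))$ arranged so that the event $\{\min_k P_k \le \epsilon\}$ has probability exactly $K\epsilon$ and on a sub-event of probability exceeding $\delta$ all coordinates lie in $[0,\mathbf p^\circ]$; then invoke $G\le\delta$ there. The remaining steps are short: combine (i), (ii) and this construction to get $Q(G(\mathbf P)\le\delta)>\delta$, contradicting $G(\mathbf P)\in\PPP_Q$, hence no strict domination exists and $B$ is admissible. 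The only genuinely delicate point, to reiterate, is ensuring the positive-measure ``slack box'' $[0,\mathbf p^\circ]$ can be hit with probability $>\delta$ while all $K$ marginals remain valid p-variables; everything else is Markov/union-bound-level routine.
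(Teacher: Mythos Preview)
Your overall strategy is right, and you correctly locate the crux: manufacturing p-variables $P_1,\dots,P_K$ for which $Q(G(\mathbf P)\le\delta)>\delta$. But the construction you propose cannot work as stated. You want to place $\mathbf P$ in the box $[0,\mathbf p^\circ]$ with probability exceeding $\delta$, where $G(\mathbf p^\circ)<\delta<K\min_k p_k^\circ$. That is impossible in general: for any valid p-variables,
\[
  Q\bigl(\mathbf P\in[0,\mathbf p^\circ]\bigr)\;\le\;Q\bigl(P_j\le p_j^\circ\bigr)\;\le\;p_j^\circ
  \quad\text{for every }j,
\]
so the box can be hit with probability at most $\min_j p_j^\circ$, which is typically much smaller than $\delta$ (you only know $\delta<K\min_j p_j^\circ$, not $\delta<\min_j p_j^\circ$). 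The ``delicate point'' you flag at the end is not merely delicate; it is an obstruction.

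The paper's proof avoids this by \emph{not} trying to put all coordinates in the box simultaneously on the bulk of the probability. After reducing to the diagonal point via $G(p,\dots,p)\le G(\mathbf p^\circ)<Kp$ with $p:=\min_k p_k^\circ$, choose $\epsilon\in(0,p)$ with $G(p,\dots,p)<K(p-\epsilon)$. Take disjoint events $A_1,\dots,A_K,B$ with $Q(A_k)=p-\epsilon$ and $Q(B)=\epsilon$, and set $U_k:=p-\epsilon$ on $A_k$, $U_k:=p$ on $B$, and $U_k:=1$ elsewhere. On each $A_k$ only \emph{one} coordinate is small, and there you use merely $G\le B$ to get $G(\mathbf U)\le K(p-\epsilon)$; the strict inequality is invoked only on the small extra event $B$, where all $U_k=p$ and $G(p,\dots,p)<K(p-\epsilon)$. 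Summing, $Q(G(\mathbf U)\le K(p-\epsilon))\ge K(p-\epsilon)+\epsilon>K(p-\epsilon)$. The two-tier structure---bulk events exploiting $G\le B$, plus a small extra event exploiting the strict gap---is the missing idea in your plan. (Two small side issues: the reduction to $B(\mathbf p^\circ)<1$ needs the monotonicity argument the paper gives, and $B(\mathbf p^\circ)<1$ only forces $\min_k p_k^\circ<1/K$, not all coordinates.)
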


\begin{proof} % [Proof of Proposition~\ref{prop:Bonferroni}]
  Denote by $M_B$ the Bonferroni p-merging function \eqref{eq:Bonferroni}.
  Suppose the statement of the proposition is false
  and fix a p-merging function $F$ that strictly dominates $M_B$.
  If $F=M_B$ whenever $M_B<1$, then $F=M_B$ also when $M_B=1$, since $F$ is increasing.
  Hence for some point $(p_1,\dots,p_K)\in[0,1]^K$,
  \[
    F(p_1,\dots,p_K) < M_B(p_1,\dots,p_K) < 1.
  \]
  Fix such $(p_1,\dots,p_K)$ and set $p:=\min(p_1,\dots,p_K)$;
  we know that $K p < 1$.
  Since
  \[
    F(p,\dots,p) \le F(p_1,\dots,p_K)<M_B(p_1,\dots,p_K) = K p,
  \]
  we can take $\epsilon\in(0,p)$ such that $F(p,\dots,p)< K(p-\epsilon)$.
  Let $A_1,\dots,A_K,B$ be disjoint events such that $Q(A_k)=p-\epsilon$ for all $k$ and $Q(B)=\epsilon$
  (their existence is guaranteed by the inequality $K p < 1$).
  Define random variables
  \[
    U_k
    :=
    \begin{cases}
      p-\epsilon & \text{if $A_k$ happens}\\
      p & \text{if $B$ happens}\\
      1 & \text{otherwise},
    \end{cases}
    % (p-\epsilon)\id_{A_k} + p \id_B + \id_{(A_k\cup B)^\complement},
    % \id_{A_k} U + p \id_B + \id_{(A_k\cup B)^\complement}
  \]
  $k=1,\dots,K$.
  It is straightforward to check that $U_1,\dots,U_K \in\PPP_Q$.
  By writing $F:=F(U_1,\dots,U_K)$ and $M_B:=M_B(U_1,\dots,U_K)$, 
  we have
  \begin{align*}
    Q(F \le K(p-\epsilon))
    &=
    Q(M_B \le K(p-\epsilon)) + Q(F \le K(p-\epsilon)< M_B)\\
    &\ge
    Q(\min(U_1,\dots,U_K) \le p-\epsilon) + Q(U_1=\dots=U_k=p)\\
    &=
    Q
    \left(
      \bigcup_{k=1}^K A_k
    \right)
    +
    Q(B)
    =
    \sum_{k=1}^K Q(A_k) + \epsilon\\
    &=
    K(p-\epsilon)+\epsilon
    >
    K(p-\epsilon).
  \end{align*}
  Therefore, $F$ is not a p-merging function,
  which gives us the desired contradiction.
\end{proof}

The general domination structure of p-merging functions appears to be very complicated,
and is the subject of \citet{Vovk/etal:2020}.

\subsection*{Connections to e-merging functions}

The domination structure of the class of e-merging functions is very simple,
according to Theorem~\ref{thm:iff}.
It makes it very easy to understand what the e-merging analogues
of R\"uger's family and the $M$-family are;
when stating the analogues we will use the rough relation $1/e\sim p$
between e-values and p-values (see Remark \ref{rem:rough}).
Let us say that an e-merging function $F$ is \emph{precise} if $c F$ is not an e-merging function for any $c>1$.

For a sequence $e_1,\dots,e_K$,
let $e_{[k]}:=e_{\pi(k)}$ be the order statistics numbered from  the largest to the smallest;
here $\pi$ is a permutation of $\{1,\dots,K\}$
ordering $e_k$ in the descending order: $e_{\pi(1)}\ge\dots\ge e_{\pi(K)}$.
Let us check that the R\"uger-type function $(e_1,\dots,e_K)\mapsto(k/K)e_{[k]}$ is a precise e-merging function.
It is an e-merging function since it is dominated by the arithmetic mean:
indeed, the condition of domination
\begin{equation}\label{eq:domination}
  \frac{k}{K} e_{[k]}
  \le
  \frac{e_1+\dots+e_K}{K},
\end{equation}
can be rewritten as
\[
  k e_{[k]}
  \le
  e_1+\dots+e_K
\]
and so is obvious.
As sometimes we have a strict inequality, the e-merging function is inadmissible
(remember that we assume $K\ge2$).
The e-merging function is precise because \eqref{eq:domination} holds as equality
when the $k$ largest $e_i$, $i\in\{1,\dots,K\}$, are all equal and greater than 1
and all the other $e_i$ are 0.

In the case of the $M$-family,
let us check that the function
\begin{equation}\label{eq:M}
  F
  :=
  (K^{1/r-1}\wedge1) M_{r,K}
\end{equation}
is a precise e-merging function,
for any $r\in[-\infty,\infty]$.
For $r\le1$, $M_{r,K}$ is increasing in $r$
\citep[Theorem~16]{Hardy/etal:1952},
and so $F = M_{r,K}$ is dominated by the arithmetic mean $M_K$;
therefore, it is an e-merging function.
For $r>1$ we can rewrite the function $F=K^{1/r-1}M_{r,K}$ as
\begin{equation*}
  F(e_1,\dots,e_K)
  =
  K^{1/r-1} M_{r,K}(e_1,\dots,e_K)
  =
  K^{-1}
  \left(
    e_1^r + \dots + e_K^r
  \right)^{1/r},
\end{equation*}
and we know that the last expression is a decreasing function of $r$
\citep[Theorem~19]{Hardy/etal:1952};
therefore, $F$ is also dominated by $M_K$ and so is a merging function.
The e-merging function $F$ is precise (for any $r$) since
\begin{align*}
  r\le 1
  &\Longrightarrow
  F(e,\dots,e) = M_K(e,\dots,e)=e\\
  r>1
  &\Longrightarrow
  F(0,\dots,0,e) = M_K(0,\dots,0,e)=e/K,
\end{align*}
and so by Proposition~\ref{prop:M} (applied to a sufficiently large $e$)
$c F$ is not an e-merging function for any $c>1$.
But $F$ is admissible if and only if $r=1$
as shown by Theorem~\ref{thm:iff}.

\begin{remark}
  The rough relation $1/e \sim p$ also sheds light on the coefficient,
  $K^{1/r-1}\wedge1 = K^{1/r-1}$ for $r>1$,
  given in \eqref{eq:M} in front of $M_{r,K}$.
  The coefficient $K^{1/r-1}$, $r>1$, in front of $M_{r,K}$ for averaging e-values
  corresponds to a coefficient of $K^{1+1/r}$, $r<-1$, in front of $M_{r,K}$ for averaging p-values.
  And indeed, by Proposition~5 of \citet{Vovk/Wang:2019},
  the asymptotically precise coefficient in front of $M_{r,K}$, $r<-1$, for averaging p-values is $\frac{r}{r+1}K^{1+1/r}$.
  The extra factor $\frac{r}{r+1}$ appears
  because the reciprocal of a p-variable is only approximately, but not exactly, an e-variable.
\end{remark}
 
\begin{remark}
  Our formulas for merging e-values are explicit
  and much simpler than the formulas for merging p-values given in \citet{Vovk/Wang:2019}, where the coefficient $a_{r,K}$ is often not analytically available. 
  Merging e-values does not involve asymptotic approximations via the theory of robust risk aggregation
  (e.g., \citet{Embrechts/etal:2015}),
  as used in that paper.
  This suggests that in some important respects e-values are easier objects to deal with than p-values.
\end{remark}

\subsection*{Merging independent p-values}
% \label{sec:ip}

In this section we will discuss ways of combining p-values $p_1,\dots,p_K$
under the assumption that the p-values are independent.

One of the oldest and most popular methods for combining p-values is Fish\-er's [\citeyear{Fisher:1932}, Section~21.1],
which we already mentioned in Section~\ref{sec:ie}.
Fisher's method is based on the product statistic $p_1\dots p_K$
(with its low values significant)
and uses the fact that $-2\ln(p_1\dots p_K)$ has the $\chi^2$ distribution with $2K$ degrees of freedom
when $p_k$ are all independent and distributed uniformly on the interval $[0,1]$;
the p-values are the tails of the $\chi^2$ distribution.

\citet{Simes:1986} proves a remarkable result for R\"uger's family \eqref{eq:Ruger}
under the assumption that the p-values are independent:
the minimum
\begin{equation}\label{eq:Simes}
  (p_1,\dots,p_K)
  \mapsto
  \min_{k\in\{1,\dots,K\}}
  \frac{K}{k}
  p_{(k)}
\end{equation}
of R\"uger's family over all $k$ turns out to be a p-merging function.
The counterpart of Simes's result still holds for e-merging functions;
moreover, now the input e-values do not have to be independent.
Namely,
\begin{equation*} % \label{eq:Simes-e}
  (e_1,\dots,e_K)
  \mapsto
  \max_{k\in\{1,\dots,K\}}
  \frac{k}{K}
  e_{[k]}
\end{equation*}
is an e-merging function.
This follows immediately from \eqref{eq:domination},
the left-hand side of which can be replaced by its maximum over $k$.
And it also follows from \eqref{eq:domination}
that there is no sense in using this counterpart;
it is better to use the arithmetic mean.

\section{Experimental results}
\label{sec:experiments}

In this section we will explore the performance of various methods of combining e-values and p-values
and multiple hypothesis testing,
both standard and introduced in this paper.
For our code, see \citet{Vovk/Wang:code}.

In order to be able to judge how significant results of testing using e-values are,
Jeffreys's [\citeyear[Appendix B]{Jeffreys:1961}] rule of thumb may be useful:
\begin{itemize}
\item
  If the resulting e-value $e$ is below 1,
  the null hypothesis is supported.
\item
  If $e\in(1,\sqrt{10})\approx(1,3.16)$,
  the evidence against the null hypothesis is not worth more than a bare mention.
\item
  If $e\in(\sqrt{10},10)\approx(3.16,10)$,
  the evidence against the null hypothesis is substantial.
\item
  If $e\in(10,10^{3/2})\approx(10,31.6)$,
  the evidence against the null hypothesis is strong.
\item
  If $e\in(10^{3/2},100)\approx(31.6,100)$,
  the evidence against the null hypothesis is very strong.
\item
  If $e>100$,
  the evidence against the null hypothesis is decisive.
\end{itemize}
% It is instructive to compare Jeffreys's scale with the standard interpretation of p-values $p$:
% \begin{itemize}
% \item
%   If $p\le0.05$, it is regarded as \emph{significant}
%   (the borderline value $0.05$ corresponds to the VS bound of $2.46$,
%   which is not worth more than a bare mention according to Jeffreys).
% \item
%   If $p\le0.01$, it is regarded as \emph{highly significant}
%   ($0.01$ corresponds to the VS bound of $7.99$,
%   which is substantial according to Jeffreys).
% \end{itemize}

Our discussions in this section assume that our main interest is in e-values,
and p-values are just a possible tool for obtaining good e-values
(which is, e.g., the case for Bayesian statisticians in their attitude towards Bayes factors and p-values;
cf.\ Section~\ref{subsec:Bayes-factors} and Appendix~\ref{app:calibrators}).
Our conclusions would have been different had our goal been to obtain good p-values.

\subsection*{Combining independent e-values and p-values}

\begin{figure}
  \begin{center}
    \includegraphics[width=0.6\textwidth]{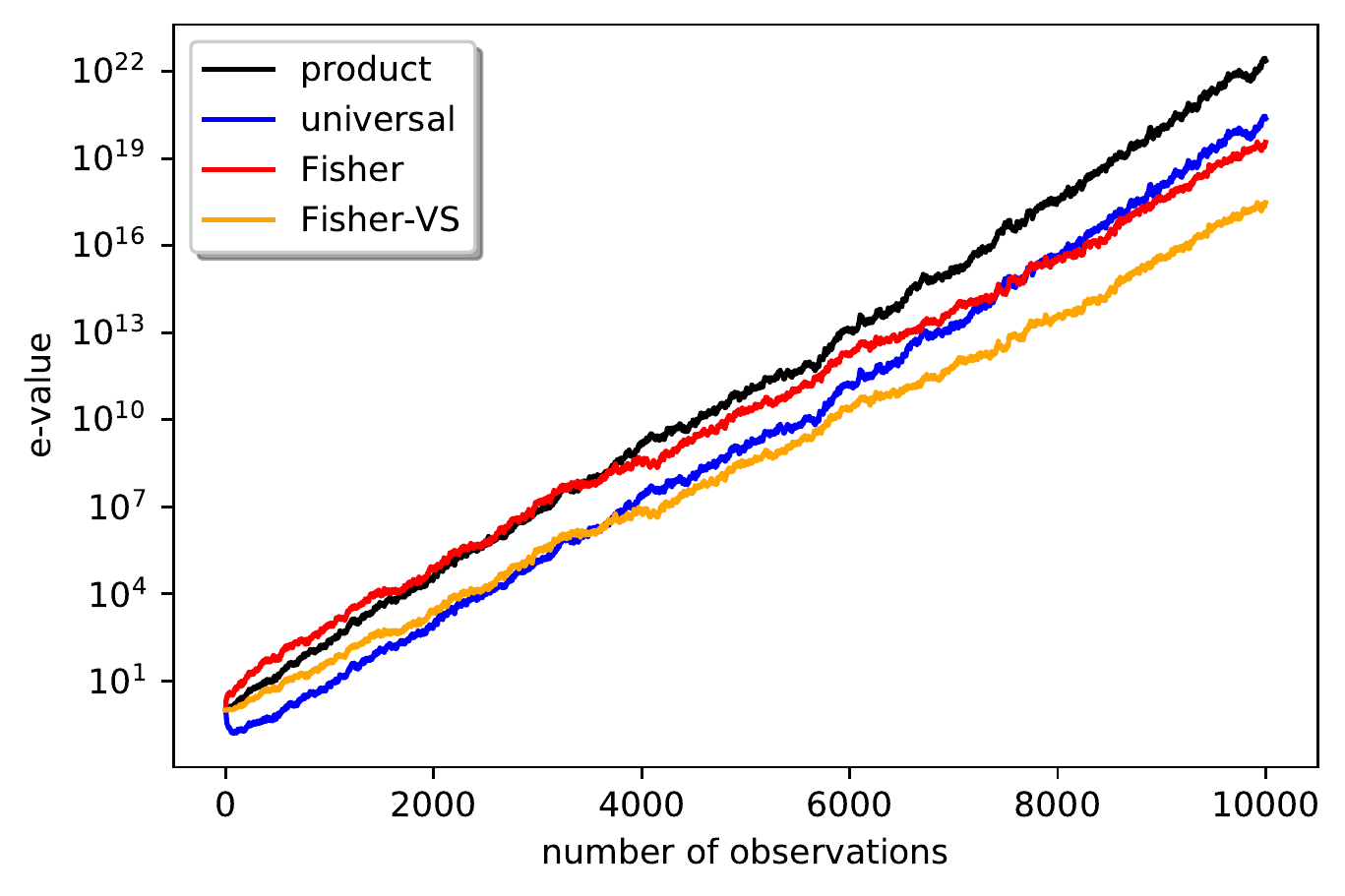}
  \end{center}
  \caption{Combining p-values using Fisher's method vs combining e-values by multiplication
    (details in text).}
    % when the alternative hypothesis is always true
  \label{fig:combining}
\end{figure}

First we explore combining independent e-values and independent p-values; see Figure~\ref{fig:combining}.
The observations are generated from the Gaussian model $N(\mu,1)$ with standard deviation 1 and unknown mean $\mu$.
The null hypothesis is $\mu=0$ and the alternative hypothesis is $\mu=\delta$;
for Figures~\ref{fig:combining} and~\ref{fig:combining_ABS} we set $\delta:=-0.1$.
The observations are IID.
Therefore, one observation does not carry much information about which hypothesis is true,
but repeated observations quickly reveal the truth (with a high probability).

For Figures~\ref{fig:combining} and \ref{fig:combining_ABS},
all data ($10{,}000$ or $1000$ observations, respectively) are generated from the alternative distribution
(there will be an example where some of the data is coming from the null distribution
in Appendix~\ref{app:extra-experiments}).
For each observation, the e-value used for testing is the likelihood ratio
\begin{equation}\label{eq:data-1}
  E(x)
  :=
  e^{-(x-\delta)^2/2} / e^{-x^2/2}
  =
  e^{x\delta-\delta^2/2}
\end{equation}
of the alternative probability density to the null probability density,
where $x$ is the observation.
It is clear that \eqref{eq:data-1} is indeed an e-variable under the null hypothesis:
its expected value is 1.
As the p-value we take
\begin{equation}\label{eq:p-1}
  P(x)
  :=
  N(x),
\end{equation}
where $N$ is the standard Gaussian distribution function;
in other words, the p-value is found using the most powerful test,
namely the likelihood ratio test given by the Neyman--Pearson lemma.

In Figure~\ref{fig:combining} we give the results for the product e-merging function \eqref{eq:product}
% (see Proposition~\ref{prop:M-i})
and Fisher's method described in the last subsection of Section~\ref{sec:p}.
(The other methods that we consider are vastly less efficient,
and we show them in the following figure,
Figure~\ref{fig:combining_ABS}.)
% As we said, we generate $10{,}000$ observations $x_1,\dots,x_{10{,}000}$ from the alternative distribution.
Three of the values plotted in Figure~\ref{fig:combining} against each $K=1,\dots,10{,}000$ are:
\begin{itemize}
\item
  the product e-value $E(x_1)\dots E(x_K)$;
  it is shown as the black line;
\item
  the reciprocal $1/p$ of Fisher's p-value $p$ obtained by merging the first $K$ p-values $P(x_1),\dots,P(x_K)$;
  it is shown as the red line;
\item
  the VS bound applied to Fisher's p-value;
  it is shown as the orange line.
  % (it is a dotted line, as can be seen from the legend, but it does not appear dotted in the plot;
  % it is just slightly lighter).
\end{itemize}
The plot depends very much on the seed for the random number generator,
and so we report the median of all values over 100 seeds.

The line for the product method is below that for Fisher's over the first 2000 observations
but then it catches up.
If our goal is to have an overall e-value summarizing the results of testing based on the first $K$ observations
(as we always assume in this section),
the comparison is unfair, since Fisher's p-values need to be calibrated.
A fairer (albeit still unfair) comparison is with the VS bound,
and the curve for the product method can be seen to be above the curve for the VS bound.
\emph{A fortiori},
the curve for the product method would be above the curve for any of the calibrators in the family~\eqref{eq:calibrator}.

It is important to emphasize that the natures of plots for e-values and p-values are very different.
For the red and orange lines in Figure~\ref{fig:combining},
the values shown for different $K$
% are not connected in a simple way
relate to different batches of data
and cannot be regarded as a trajectory of a natural stochastic process.
In contrast, the values shown by the black line for different $K$ are updated sequentially,
the value at $K$ being equal to the value at $K-1$ multiplied by $E(x_K)$,
and form a trajectory of
% one stochastic process (namely,
a test martingale.
Moreover, for the black line we do not need the full force of the assumption of independence of the p-values.
As we discuss at the end of Section~\ref{sec:ie},
it is sufficient to assume that $E(x_K)$ is a valid e-value given $x_1,\dots,x_{K-1}$;
the black line in Figure~\ref{fig:combining} is then still a trajectory of a test supermartingale.

What we said in the previous paragraph
can be regarded as an advantage of using e-values.
On the negative side,
computing good (or even optimal in some sense) e-values often requires more detailed knowledge.
% (this seems to be a general feature of Bayesian statistics,
% which derives stronger conclusions from stronger assumptions
% as compared with frequentist statistics).
For example, whereas computing the e-value \eqref{eq:data-1} requires the knowledge of the alternative hypothesis,
for computing the p-value \eqref{eq:p-1} it is sufficient to know that the alternative hypothesis
corresponds to $\mu<0$.
Getting $\mu$ very wrong will hurt the performance of methods based on e-values.
To get rid of the dependence on
% If we do not know
$\mu$,
we can, e.g., integrate the product e-value over $\delta\sim N(0,1)$
(taking the standard deviation of 1 is somewhat wasteful in this situation,
but we take the most standard probability measure).
This gives the ``universal'' test martingale (see e.g., \citet{Howard/etal:arXiv1810})
\begin{align}
  S_K
  &:=
  \frac{1}{\sqrt{2\pi}}
  \int_{-\infty}^{\infty}
  \exp(-\delta^2/2)
  \prod_{k=1}^K
  \exp(x_k\delta-\delta^2/2)
  \d\delta\notag\\
  &=
  \frac{1}{\sqrt{K+1}}
  \exp
  \left(
    \frac{1}{2(K+1)}
    \left(
      \sum_{k=1}^K
      x_k
    \right)^2
  \right).
  \label{eq:universal}
\end{align}
This test supermartingale is shown in blue in Figure~\ref{fig:combining}.
It is below the black line but at the end of the period it catches up even with the line for Fisher's method
(and beyond that period it overtakes Fisher's method more and more convincingly).

\begin{figure}
  \begin{center}
    \includegraphics[width=0.6\textwidth]{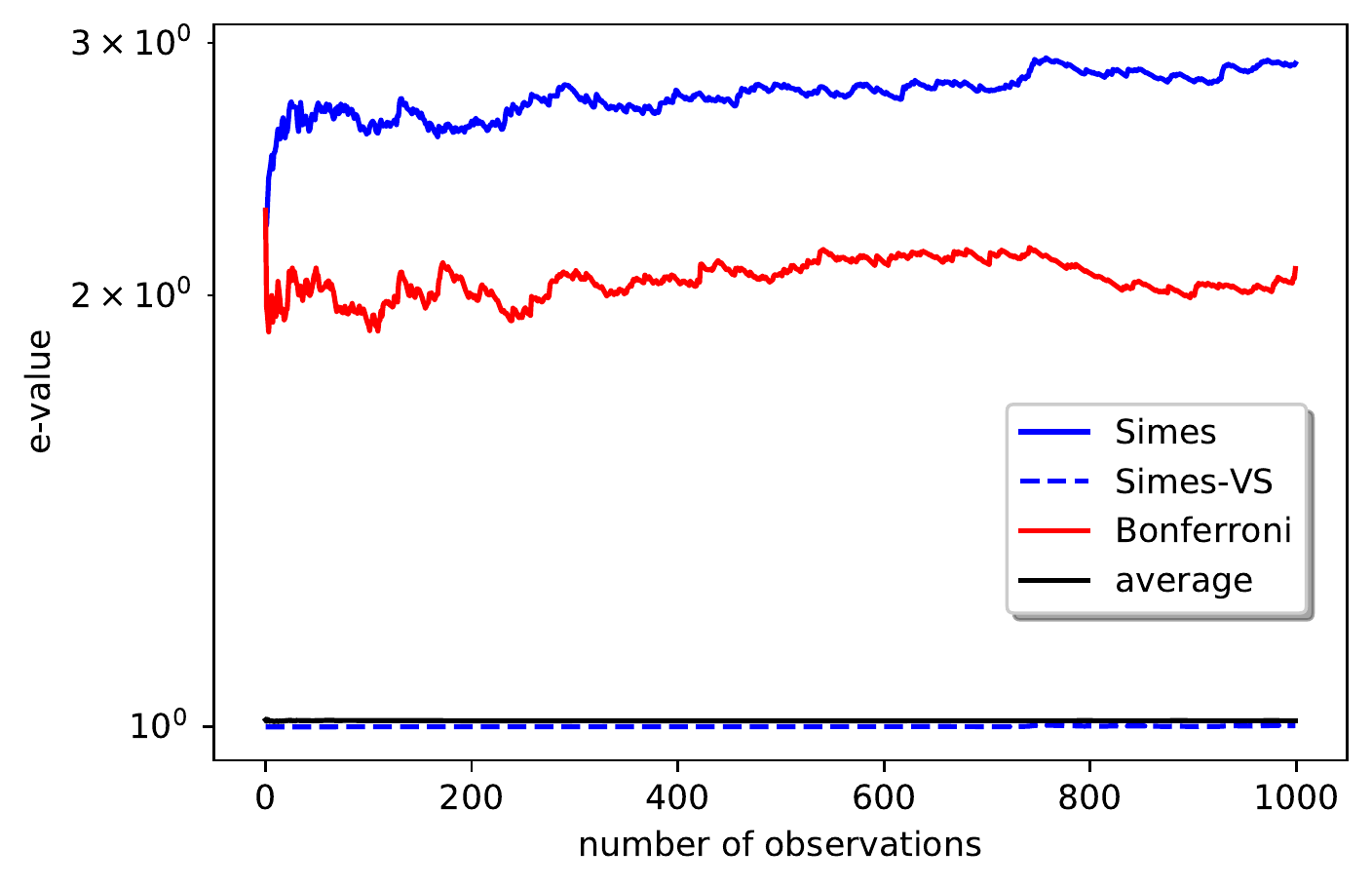}
  \end{center}
  \caption{Combining p-values using Simes's and Bonferroni's methods
    and combining e-values using averaging (details in text).}
  \label{fig:combining_ABS}
\end{figure}

Arithmetic average \eqref{eq:average} and Simes's method \eqref{eq:Simes}
have very little power in the situation of Figure~\ref{fig:combining}:
see Figure~\ref{fig:combining_ABS},
which plots the e-values produced by the averaging method,
the reciprocals $1/p$ of Simes's p-values $p$,
the VS bound for Simes's p-values,
and the reciprocals of the Bonferroni p-values
over 1000 observations,
all averaged (in the sense of median) over 1000 seeds.
They are very far from attaining statistical significance (a p-value of $5\%$ or less)
or collecting substantial evidence against the null hypothesis
(an e-value of $\sqrt{10}$ or more according to Jeffreys).

\subsection*{Multiple hypothesis testing}

\begin{figure}
  \begin{center}
    \includegraphics[width=0.8\textwidth]{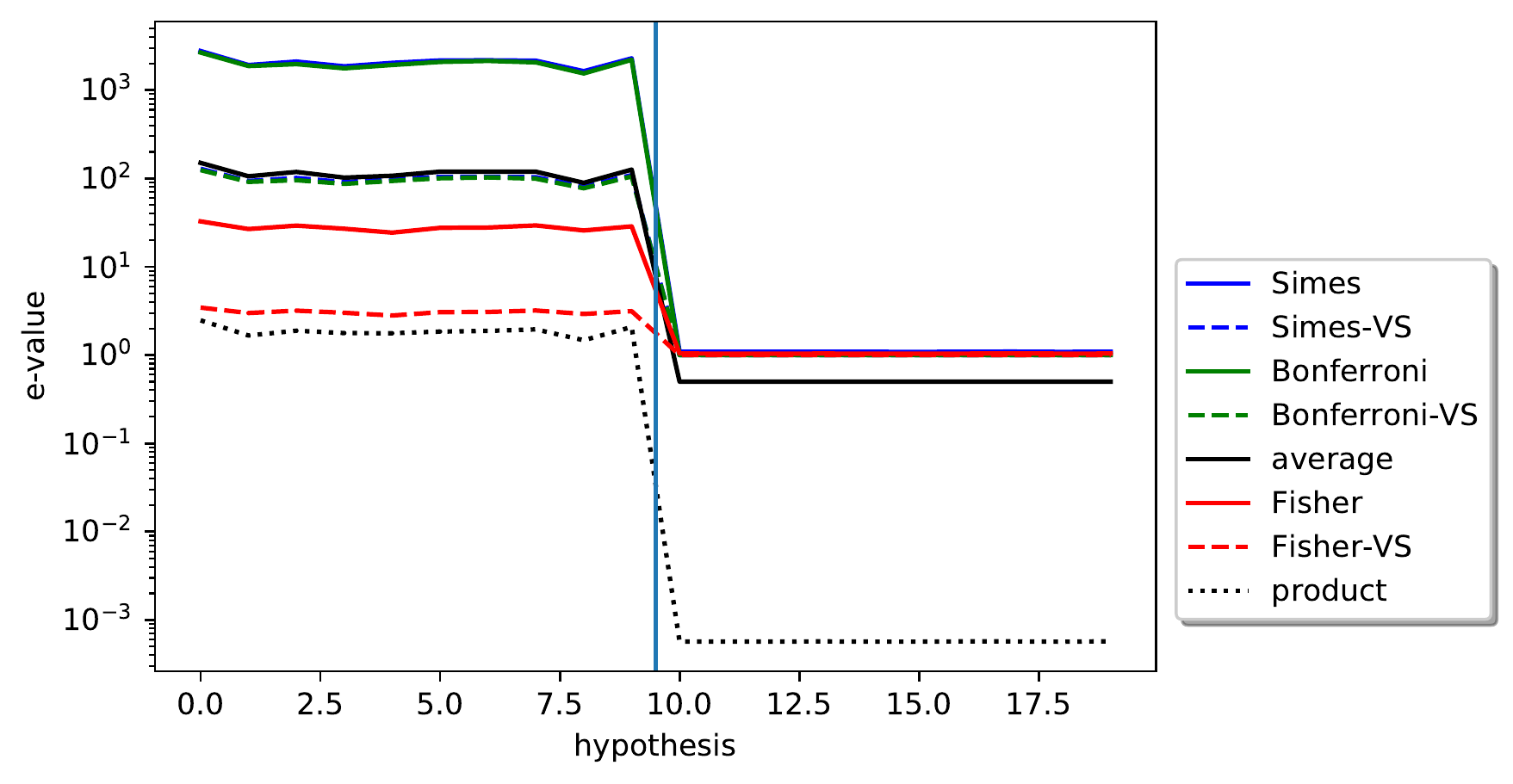}
  \end{center}
  \caption{Multiple hypothesis testing for 20 hypotheses using p-values and e-values,
    with some graphs indistinguishable (details in text).}
  \label{fig:multiple_small}
\end{figure}

Next we discuss multiple hypothesis testing.
Figure~\ref{fig:multiple_small} shows plots of adjusted e-values and adjusted p-values
resulting from various methods for small numbers of hypotheses,
including Algorithms~\ref{alg:BH} and~\ref{alg:BH-i}.
The observations are again generated from the statistical model $N(\mu,1)$.

We are testing 20 null hypotheses.
All of them are $\mu=0$, and their alternatives are $\mu=-4$.
Each null hypothesis is tested given an observation drawn either from the null or from the alternative.
The first 10 null hypotheses are false, and in fact the corresponding observations
are drawn from the alternative distribution.
The remaining 10 null hypotheses are true,
and the corresponding observations are drawn from them rather than the alternatives.
The vertical blue line at the centre of Figure~\ref{fig:multiple_small}
separates the false null hypotheses from the true ones:
null hypotheses 0 to 9 are false and 10 to 19 are true.
We can see that at least some of the methods can detect that the first 10 null hypotheses are false.

% caption:
%   for the first 10 observations the alternative hypothesis is true,
%   and for the last 10 the null hypothesis is true.
%   For e-merging (averaging, i.e., Algorithm~\ref{alg:BH})
%   and ie-merging (product, i.e., Algorithm~\ref{alg:BH-i}) we plot the resulting e-values,
%   for Simes's, Bonferroni's, and Fisher's methods we plot the reciprocals $1/p$ of the resulting p-values $p$,
%   and for their VS versions we plot the VS bounds,
%   all averaged (in the sense of median) over 1000 seeds.}

Since some of the lines are difficult to tell apart,
we will describe the plot in words.
The top two horizontal lines to the left of the vertical blue line are indistinguishable
but are those labeled as Simes and Bonferroni in the legend;
they correspond to e-values around $2\times10^3$.
The following cluster of horizontal lines to the left of the vertical blue line
(with e-values around $10^2$) are those labeled as average, Simes-VS, and Bonferroni-VS,
with average slightly higher.
To the right of the vertical blue line,
the upper horizontal lines (with e-values $10^0$) include all methods except for average and product;
the last two are visible.

% The true overall hypothesis $Q$ is that the first 10 observations are generated from $N(-4,1)$
% and the last 10 observations are generated from $N(0,1)$.
Most of the methods (all except for Bonferroni and Algorithm~\ref{alg:BH})
require the observations to be independent.
The base p-values are \eqref{eq:p-1},
and the base e-values are the likelihood ratios
\begin{equation}\label{eq:data-2}
  E(x)
  :=
  \frac12
  e^{x\delta-\delta^2/2}
  +
  \frac12
\end{equation}
(cf.\ \eqref{eq:data-1}) of the ``true'' probability density to the null probability density,
where the former assumes that the null or alternative distribution for each observation
is decided by coin tossing.
Therefore, the knowledge encoded in the ``true'' distribution is that
half of the observations are generated from the alternative distribution,
but it is not known that these observations are in the first half.
We set $\delta:=-4$ in \eqref{eq:data-2},
keeping in mind that accurate prior knowledge is essential for the efficiency of methods based on e-values. 

% We make the signal strong by setting $\delta:=-4$ in \eqref{eq:data-2},
% since we will have a fairly large number of null hypotheses in our next experiment.

A standard way of producing multiple testing procedures
is applying the closure principle described in Appendix~\ref{app:survey}
and already implicitly applied in Section~\ref{sec:multiple}
to methods of merging e-values.
In Figure~\ref{fig:multiple_small} we report the results for the closures of five methods,
three of them producing p-values (Simes's, Bonferroni's, and Fisher's)
and two producing e-values (average and product);
see Section~\ref{sec:multiple} for self-contained descriptions
of the last two methods (Algorithms~\ref{alg:BH} and~\ref{alg:BH-i}).
For the methods producing p-values we show the reciprocals $1/p$ of the resulting p-values $p$
(as solid lines)
and the corresponding VS bounds (as dashed lines).
For the closure of Simes's method we follow the appendix of \citet{Wright:1992},
the closure of Bonferroni's method is described in \citet{Holm:1979}
(albeit not in terms of adjusted p-values),
and for the closure of Fisher's method we use
% a modification,
% described in detail in Appendix~\ref{app:FACT}, of
Dobriban's [\citeyear{Dobriban:2020}] FACT (FAst Closed Testing) procedure.
To make the plot more regular, all values are averaged
(in the sense of median)
over 1000 seeds of the Numpy random number generator.

According to Figure~\ref{fig:multiple_small},
the performance of Simes's and Bonferroni's methods is very similar,
% and the corresponding solid and dashed lines are almost indistinguishable,
despite Bonferroni's method not depending on the assumption of independence of the p-values.
The e-merging method of averaging (i.e., Algorithm~\ref{alg:BH})
produces better e-values than those obtained by calibrating the closures of Simes's and Bonferroni's methods;
% even though the line corresponding to the former (black solid line)
% is below the line corresponding to the latter (blue solid line):
remember that the line corresponding to Algorithm~\ref{alg:BH} should be compared
with the VS versions (blue and green dashed, which almost coincide) of the lines
corresponding to the closures of Simes's and Bonferroni's methods,
and even that comparison is unfair and works in favour of those two methods
(since the VS bound is
% just an optimistic bound,
not a valid calibrator).
% The closure of Fisher's method is much worse than Algorithm~\ref{alg:BH}.
% Algorithm~\ref{alg:BH-i} (black dotted line) performs even worse in this context.
% We test only a small number of hypotheses (10 false and 10 true) in Figure~\ref{fig:multiple_small},
The other algorithms perform poorly.
% and for larger numbers of hypotheses Algorithm~\ref{alg:BH-i} becomes so poor
% that differences between the other methods become difficult to see.

\begin{figure}
  \begin{center}
    \includegraphics[width=0.6\textwidth]{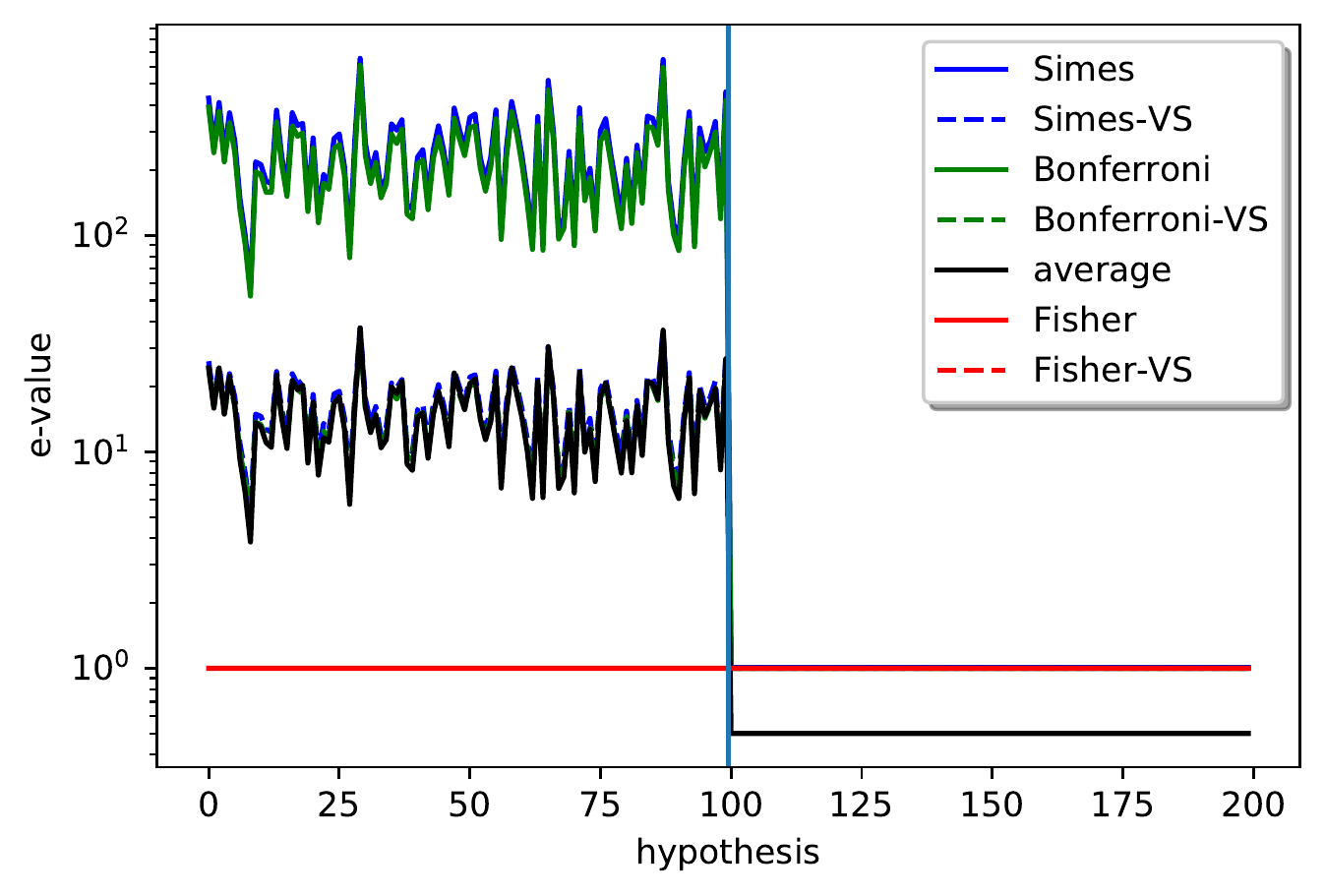}
  \end{center}
  \caption{The analogue of Figure~\ref{fig:multiple_small} without the product method,
    with 200 observations, and with some graphs indistinguishable (details in text).}
  \label{fig:multiple_big}
\end{figure}

Figure~\ref{fig:multiple_big} is an analogue of Figure~\ref{fig:multiple_small}
that does not show results for merging by multiplication
(for large numbers of hypotheses its results are so poor that, when shown,
differences between the other methods become difficult to see).
% (which does not work well in this context).
% As explained earlier,
% this allows us to experiment with much larger numbers of hypotheses.
To get more regular and comparable graphs,
we use averaging (in the sense of median) over 100 seeds.

Since some of the graphs coincide, or almost coincide,
we will again describe the plot in words
(referring to graphs that are straight or almost straight as lines).
To the left of the vertical blue line
(separating the false null hypotheses 0--99 from the true null hypotheses 100--199)
we have three groups of graphs:
the top graphs (with e-values around $2\times10^2$) are those labeled as Simes and Bonferroni in the legend,
the middle graphs (with e-values around $10^1$) are those labeled as average, Simes-VS, and Bonferroni-VS,
and the bottom lines (with e-values around $10^0$) are those labeled as Fisher and Fisher-VS.
To the right of the vertical blue line,
we have two groups of lines:
the upper lines (with e-values $10^0$) include all methods except for average,
which is visible.

Now the graph for the averaging method (Algorithm~\ref{alg:BH})
is very close to (barely distinguishable from)
the graphs for the VS versions of the closures of Simes's and Bonferroni's methods,
which is a very good result (in terms of the quality of e-values that we achieve):
the VS bound is a bound on what can be achieved whereas the averaging method produces a bona fide e-value.
  The two lines (solid and dotted) for Fisher's method are indistinguishable from the horizontal axis;
  the method does not scale up in our experiments
  (which is a known phenomenon in the context of p-values:
  see, e.g., \citet[Section 1]{Westfall:2011}).
  And the four blue and green lines (solid and dotted) for Simes's and Bonferroni's methods are not visible to the right of 100
  since they are covered by the lines for Fisher's method.
  The behaviour of the lines for Simes's, Bonferroni's, and Fisher's methods to the right of 100 demonstrates
  that they do not produce valid e-values:
  for validity,
  we have to pay by getting e-values below 1 when the null hypothesis is true
  in order to be able to get large e-values when the null hypothesis is false
  (which is the case for the averaging method, represented by the black line).
  Most of these remarks are also applicable to  Figure~\ref{fig:multiple_small}.

A key advantage of the averaging and Bonferroni's methods over Simes's and Fisher's
is that they are valid regardless of whether the base e-values or p-values are independent.

\section{Conclusion}
\label{sec:conclusion}

This paper systematically explores the notion of an e-value,
which can be regarded as a betting counterpart of p-values
that is much more closely related to Bayes factors and likelihood ratios.
We argue that e-values often are more mathematically convenient than p-values
and lead to simpler results.
In particular, they are easier to combine:
the average of e-values is an e-value, and the product of independent e-values is an e-value.
We apply e-values in two areas,
multiple testing of a single hypothesis and testing multiple hypotheses,
and obtain promising experimental results.
One of our experimental findings is that,
for testing multiple hypotheses,
the performance of the most natural method based on e-values
almost attains the Vovk--Sellke bound for the closure of Simes's method,
despite that bound being overoptimistic and not producing bona fide e-values.

\subsection*{Acknowledgments}

The authors thank Aaditya Ramdas, Alexander Schied, and Glenn Shafer for helpful suggestions.
Thoughtful comments by the associate editor and four reviewers of the journal version
have led to numerous improvements in presentation and substance.

V.~Vovk's research has been partially supported by Amazon, Astra Zeneca, and Stena Line.
R.~Wang is supported by  the Natural Sciences and Engineering Research Council of Canada 
(RGPIN-2018-03823, RGPAS-2018-522590).

\appendix
\numberwithin{equation}{section}

\section{Comparisons with existing literature}
\label{app:survey}

\subsection{Bayes factors}
\label{subsec:Bayes-factors}

Historically, the use of p-values versus e-values reflects the conventional division of statistics
into frequentist and Bayesian
(although a sizable fraction of people interested in the foundations of statistics,
including the authors of this paper,
are neither frequentists nor Bayesians).
P-values are a hallmark of frequentist statistics,
but Bayesians often regard p-values as misleading,
preferring the use of Bayes factors
(which can be combined with prior probabilities to obtain posterior probabilities).
In the case of simple statistical hypotheses,
a Bayes factor is the likelihood ratio of an alternative hypothesis to the null hypothesis
(or vice versa, as in \citet{Shafer/etal:2011}).
From the betting point of view of this paper,
the key property of the Bayes factor is that it is an e-variable.

For composite hypotheses, Bayes factors and e-values diverge.
For example, a possible general definition of a Bayes factor is as follows
\citep[Section~1.2]{Kamary/etal:arXiv1412}.
Let $(f^0_{\theta}\mid\theta\in\Theta_0)$ and $(f^1_{\theta}\mid\theta\in\Theta_1)$
be two statistical models on the same sample space $\Omega$,
which is a measurable space, $(\Omega,\AAA)$, with a fixed measure $P$,
and $\Theta_1$ and $\Theta_2$ are measurable spaces.
Each $f^n_{\theta}(\omega)$, $n\in\{0,1\}$, is a probability density as function of $\omega$
and a measurable function of $\theta\in\Theta_n$.
The corresponding families of probability measures are
$(f^0_{\theta}P)_{\theta\in\Theta_0}$ and $(f^1_{\theta}P)_{\theta\in\Theta_1}$,
where $f P$ is defined as the probability measure $(f P)(A):=\int_A f\d P$, $A\in\AAA$.
Make them Bayesian models by fixing prior probability distributions $\mu_0$ and $\mu_1$
on $\Theta_0$ and $\Theta_1$, respectively.
This way we obtain Bayesian analogues of the null and alternative hypotheses, respectively.
The corresponding \emph{Bayes factor} is
\begin{equation}\label{eq:B}
  B(\omega)
  :=
  \frac
    {\int_{\Theta_1}f^1_{\theta}(\omega)\mu_1(\dd\theta)}
    {\int_{\Theta_0}f^0_{\theta}(\omega)\mu_0(\dd\theta)},
  \quad
  \omega\in\Omega.
\end{equation}
If $\Theta_0$ is a singleton,
then $B$ is an e-variable for the probability measure $Q:=f^0 P$.
In general, however, this is no longer true.
Remember that, according to our definition in Section~\ref{sec:multiple},
for $B$ to be an e-variable w.r.\ to the null hypothesis $\Theta_0$
it needs to satisfy $\int B f^0_{\theta} \d P\le1$ for all $\theta\in\Theta_0$.
However, \eqref{eq:B} only guarantees this property ``on average'',
$\int B f^0_{\theta} \d P\mu_0(\dd\theta)\le1$.
Therefore, for a composite null hypothesis a Bayes factor does not need to be an e-value
w.r.\ to that null hypothesis
(it is an e-value w.r.\ to its average).

The literature on Bayes factors is vast;
we only mention the fundamental book by \citet{Jeffreys:1961},
the influential review by \citet{Kass/Raftery:1995},
and the historical investigation by \citet{Etz/Wagenmakers:2017}.
Jeffreys's scale that we used in Section~\ref{sec:experiments}
was introduced in the context of Bayes factors,
but of course it is also applicable to e-values
in view of the significant overlap between the two notions.
\citet[Section 3.2]{Kass/Raftery:1995} simplify Jeffreys's scale
by merging the ``strong'' and ``very strong'' categories into one,
which they call ``strong''.

\subsection{Algorithmic theory of randomness}
\label{subsec:ATR}

One area where both p-values and e-values have been used for a long time
is the algorithmic theory of randomness
(see, e.g., \citet{Shen/etal:2017}),
which originated in Kolmogorov's work on the algorithmic foundations of probability and information
\citep{Kolmogorov:1965,Kolmogorov:1968}.
\citet{Martin-Lof:1966} introduced an algorithmic version of p-values,
and then \citet{Levin:1976} introduced an algorithmic version of e-values.
In the algorithmic theory of randomness
people are often interested in low-accuracy results,
and then p-values and e-values can be regarded as slight variations of each other:
if $e$ is an e-value, $1/e$ will be a p-value;
and vice versa, if $p$ is a p-value, $1/p$ will be an approximate e-value.
We discussed this approximation in detail in the main paper;
see, e.g., Remark~\ref{rem:rough}.

\subsection{Standard methods of multiple hypothesis testing}
\label{subsec:MHT}
% FWV, FWER, closure, and FDR

Let us check what the notion of family-wise validity becomes
when p-variables are used instead of e-variables.
Now we have a procedure that,
given p-variables $P_k$ for testing $H_k$, $k\in\{1,\dots,K\}$,
produces random variables $P^*_1,\dots,P^*_K$ taking values in $[0,1]$.
A \emph{conditional p-variable} is a family of p-variables $P_Q$,
$Q\in\mathfrak{P}(\Omega)$.
The procedure's output $P^*_1,\dots,P^*_K$ is \emph{family-wise valid} (\emph{FWV})
% for the given $P_1,\dots,P_K$
if there exists a conditional p-variable $(P_Q)_{Q\in\mathfrak{P}(\Omega)}$ such that
\begin{equation}\label{eq:p-FWV}
  \forall k\in\{1,\dots,K\} \;
  \forall Q\in H_k:
  P_Q \le P^*_k.
\end{equation}
In this case we can see that, for any $Q\in\mathfrak{P}(\Omega)$ and any $\epsilon\in(0,1)$,
\begin{equation}\label{eq:FWER}
  Q(\exists k\in\{1,\dots,K\}: Q\in H_k \text{ and } P^*_k\le\epsilon)
  \le
  Q(P_Q\le\epsilon)
  \le
  \epsilon.
\end{equation}
The left-most expression in \eqref{eq:FWER} is known as the \emph{family-wise error rate}
(the standard abbreviation is FWER)
of the procedure that rejects $H_k$ when $P^*_k\le\epsilon$.
The inequality between the extreme terms of \eqref{eq:FWER} can be expressed
as $P^*_k$ being \emph{family-wise adjusted p-values}.
(See, e.g., \citet[Section 3.2]{Efron:2010}.)

On the other hand, we can check that any procedure satisfying \eqref{eq:FWER}
will satisfy \eqref{eq:p-FWV} for some conditional p-variable $(P_Q)$:
indeed, we can set
\[
  P_Q
  :=
  \min_{Q\in H_k}
  P^*_k.
\]

\begin{remark} % \label{rem:AE}
  Notice that calibrators maintain the FWV property.
  Namely, if p-variables $P^*_1,\dots,P^*_K$ are FWV
  and $f$ is a calibrator,
  the e-variables $f(P^*_1),\dots,f(P^*_K)$ are FWV.
  This follows immediately from the definitions~\eqref{eq:FWV} and~\eqref{eq:p-FWV}.
  And in the opposite direction, if e-variables $E^*_1,\dots,E^*_K$ are FWV
  and $g$ is an e-to-p-calibrator,
  the p-variables $g(E^*_1)\wedge1,\dots,g(E^*_K)\wedge1$ are FWV.
  % We are grateful to the Associate Editor for this remark.
\end{remark}

As we mentioned in Section~\ref{sec:multiple},
Algorithms~\ref{alg:BH} and~\ref{alg:BH-i} can be obtained from the e-merging function \eqref{eq:average}
by applying the closure principle.
In our description of this principle we will follow \citet[Section 3.3]{Efron:2010}.
Suppose, for some $\epsilon>0$ and all $I\subseteq\{1,\dots,K\}$,
we have a level-$\epsilon$ test function $\phi_I:\Omega\to\{0,1\}$:
\[
  \forall Q\in\cap_{i\in I}H_i:
  \E^Q[\phi_I]
  \le
  \epsilon;
\]
$\phi_I=1$ means that the combined null hypothesis $\cap_{i\in I}H_i$ is rejected.
(Such a collection of ``local tests'', for all $I$ and $\epsilon$,
is just a different representation of p-merging functions.)
The principle then recommends the simultaneous test function
\[
  \Phi_J
  :=
  \min_{I\supseteq J}
  \phi_I,
  \qquad
  J\subseteq\{1,\dots,K\};
\]
this simultaneous test function rejects $J$
if $\phi$ rejects all $I$ such that $J\subseteq I\subseteq\{1,\dots,K\}$.
If $P_1,\dots,P_K$ are p-variables, $f$ is a symmetric p-merging function, and $\phi$ is defined by
\[
  \phi_I = 1
  \Longleftrightarrow
  f(P_i,i\in I) \le \epsilon
\]
(which is clearly a level-$\epsilon$ test function),
we have
\[
  \Phi_J = 1
  \Longleftrightarrow
  \max_{I\supseteq J}
  f(P_i,i\in I)
  \le
  \epsilon
\]
(omitting the dependence of $\phi$ and $\Phi$ on $\epsilon$).
This corresponds to the simultaneous p-variable
\begin{equation}\label{eq:J}
  P_J
  :=
  \max_{I\supseteq J}
  f(P_i,i\in I).
\end{equation}
In this paper we are only interested in the case where $J$ is a singleton
(analogues for general $J$ are considered in \citet{Vovk/Wang:arXiv1912,Vovk/Wang:arXiv2003},
to be discussed later).
This gives us the adjusted p-values
\[
  P^*_k = P_{\{k\}}
  :=
  \max_{I\ni k}
  f(P_i,i\in I).
\]
The corresponding formula for the adjusted e-values is
\[
  E^*_k
  :=
  \min_{I\ni k}
  f(E_i,i\in I).
\]
This coincides with
\begin{itemize}
\item
  \eqref{eq:E*} when $f$ is taken to be arithmetic average
  (which is implemented in Algorithm~\ref{alg:BH}),
\item
  and \eqref{eq:E*-i} when $f$ is taken to be product
  (which is implemented in Algorithm~\ref{alg:BH-i}).
\end{itemize}

When the $J$ in \eqref{eq:J} is allowed not to be a singleton and the p-values are replaced by e-values,
we obtain the possibility of controlling false discovery proportion.
This appears to us an interesting program of research;
the ease of merging e-functions open up new possibilities.
First steps in this directions are done in \citet{Vovk/Wang:arXiv1912}
and (under the assumption of independence) in \citet{Vovk/Wang:arXiv2003}.

\subsubsection*{Empirical Bayes methods}

Several simple but informative models for multiple hypothesis testing
have been proposed in the framework of empirical Bayes methods.
Perhaps the simplest model \citep[Chapter 2]{Efron:2010},
known as the \emph{two-groups model},
is where we are given a sequence of real values $z_1,\dots,z_N$,
each of which is generated either from the null probability density function $f_0$
or from the alternative probability density function $f_1$,
w.r.\ to Lebesgue measure.
Each value is generated from $f_0$ with probability $\pi_0$
and from $f_1$ with probability $\pi_1$,
where $\pi_0+\pi_1=1$.
This gives the overall probability density function $f:=\pi_0 f_0+\pi_1 f_1$.

From the Bayesian point of view, the most relevant value for multiple hypothesis testing
is the conditional probability $\fdr(z):=\pi_0 f_0(z)/f(z)$ that an observed value $z$
has been generated from the null probability density function $f_0$;
it is knows as the \emph{local false discovery rate}.
The most natural e-value in this context is the likelihood ratio $e:=f_1(z)/f_0(z)$,
and the local false discovery rate can be written in the form $\fdr(z)=\pi_0/(\pi_0+\pi_1 e)$.
\citet[Section~5.1]{Efron:2010} refers to the ratio $f_1(z)/f_0(z)$ as ``Bayes factor'';
as discussed in Section~\ref{subsec:Bayes-factors},
in this case the notions of e-values and Bayes factors happen to coincide.

A conventional threshold for reporting ``interesting'' cases $z_i$ is $\fdr(z_i)\le0.2$,
where in practice the true $\fdr(z_i)$ is replaced by its empirical estimate
\citep[Section~5.1]{Efron:2010}.
In terms of the likelihood ratio $e$,
the criterion $\fdr(z_i)\le0.2$ can be rewritten as $e\ge4\pi_0/\pi_1$
\citep[Exercise~5.1]{Efron:2010};
of course, the Bayesian decision depends on the ratio of the prior probabilities
of the two hypotheses.
When $\pi_0\ge0.1$ (which is a common case),
we have $e\ge4\pi_0/\pi_1\ge36$
\citep[(5.9)]{Efron:2010},
and so in large-scale hypothesis testing
we need at least very strong evidence on Jeffreys's scale (Section~\ref{sec:experiments})
to declare a case interesting.

The two-groups model is highly idealized;
e.g., all non-null $z$ are assumed to be coming from the same distribution, $f_1$.
In the empirical Bayesian approach the values $z_1,\dots,z_N$ are assumed to satisfy some independence-type conditions
(e.g., \citet{Storey/Tibshirani:2003} assume what they call weak dependence),
in order to be able to estimate relevant quantities and functions, such as $f$, from the data.
In general, this approach makes different assumptions and arrives at different conclusions
as compared with our approach.

\subsection{Test martingales in statistics}
% \label{subsec:martingales}

% If $e_1,e_2,\dots$ are sequential e-values,
% the cumulative products
% \[
%   S_K
%   :=
%   e_1\dots e_K
% \]
% are known as \emph{test martingales}.
% Typical e-values: likelihood ratios after $n$ steps.
% They are conditional expectations,
% which justifies the term ``e-values''.
This paper only scratches the surface of the huge topic of test martingales
and their use in statistics.
Martingales were introduced by \citet{Ville:1939} and popularized by \citet{Doob:1953};
see \citet{Mazliak/Shafer:2009} for their fascinating history,
including their applications in statistics.
Recent research includes exponential line-crossing inequalities \citep{Howard/etal:arXiv1808},
nonparametric confidence sequences \citep{Howard/etal:arXiv1810},
and universal inference \citep{Wasserman/etal:2020}.

\section{History and other classes of calibrators}
\label{app:calibrators}

The question of calibration of p-values into Bayes factors
has a long history in Bayesian statistics.
The idea was first raised by \citet[Section 4.2]{Berger/Delampady:1987}
(who, however, referred to the idea as ``ridiculous'';
since then the idea has been embraced by the Bayesian community).
The class of calibrators \eqref{eq:calibrator}
% $p\mapsto\kappa p^{\kappa-1}$
was proposed in \citet{Vovk:1993} and rediscovered in \citet{Sellke/etal:2001}.
A simple characterization of the class of all calibrators was first obtained in \citet{Shafer/etal:2011}.
A popular Bayesian point of view is that p-values tend to be misleading
and need to be transformed into e-values (in the form of Bayes factors) in order to make sense of them.

Recall that the calibrator  \eqref{eq:reviewer} is a mixture of \eqref{eq:calibrator},
and it is closer to $1/p$ than any of \eqref{eq:calibrator} as  $p\to 0$. 
Of course, \eqref{eq:reviewer} is not the only calibrator that is close to $1/p$.
Since
\[
  \int_0^{e^{-1-\kappa}}
  v^{-1} (-\ln v)^{-1-\kappa}
  \d v
  =
  \frac{1}{\kappa(1+\kappa)^\kappa},
\]
where $\kappa\in(0,\infty)$,
each function
\begin{equation}\label{eq:calibrator-2}
  H_{\kappa}(p)
  :=
  \begin{cases}
    \infty & \text{if $p=0$}\\
    \kappa (1+\kappa)^\kappa p^{-1} (-\ln p)^{-1-\kappa} & \text{if $p\in(0,\exp(-1-\kappa)]$}\\
    0 & \text{if $p\in(\exp(-1-\kappa),1]$}
  \end{cases}
\end{equation}
is a calibrator \citep{Shafer/etal:2011}.
It is instructive to compare \eqref{eq:calibrator-2} with $\kappa:=1$ and \eqref{eq:reviewer};
whereas the former benefits from the extra factor of $2$,
it kicks in only for $p\le\exp(-2)\approx0.135$.

We can generalize the calibrator~\eqref{eq:reviewer}
by replacing the uniform distribution on the interval $[0,1]$
by the distribution with density \eqref{eq:calibrator}.
Replacing $\kappa$ in \eqref{eq:reviewer} by $x$ to avoid clash of notation,
we obtain the calibrator
\[
  F_{\kappa}(p)
  :=
  \int_0^1
  x p^{x-1}
  \kappa x^{\kappa-1}
  \d x
  =
  \frac{\kappa\gamma(1+\kappa,-\ln p)}{p(-\ln p)^{1+\kappa}},
\]
where
\[
  \gamma(a,z)
  :=
  \int_0^z
  t^{a-1} \exp(-t)
  \d t
\]
is one of the incomplete gamma functions \citep[8.2.1]{DLMF}.
For $\kappa:=1$ we have
\[
  \gamma(2,-\ln p)
  =
  1 - p + p\ln p
\]
\citep[8.4.7]{DLMF},
which recovers \eqref{eq:reviewer}.
For other positive values of $\kappa$,
we can see that
\[
  F_{\kappa}(p)
  \sim
  \frac{\kappa\Gamma(1+\kappa)}{p(-\ln p)^{1+\kappa}}
\]
as $p\to0$.
The coefficient in \eqref{eq:calibrator-2} is better,
$\Gamma(1+\kappa)<(1+\kappa)^{\kappa}$ for all $\kappa>0$,
but $F_{\kappa}$ gives an informative e-value for all $p$,
not just for $p\le\exp(-1-\kappa)$.

\section{Merging infinite e-values}
\label{app:infty}

Let us check that, despite the conceptual importance of infinite e-values,
we can dispose of them when discussing e-merging functions.

\begin{proposition}\label{prop:infty}
  For any e-merging function $F$,
  the function $F':[0,\infty]^K\to[0,\infty]$ defined by
  \[
    F'(\mathbf{e})
    :=
    \begin{cases}
      F(\mathbf{e}) & \text{if $\mathbf{e}\in[0,\infty)^K$}\\
      \infty & \text{otherwise}
    \end{cases}
  \]
  is also an e-merging function.
  Moreover, $F'$ dominates $F$.
  Neither e-merging function takes value $\infty$ on $[0,\infty)^K$.
\end{proposition}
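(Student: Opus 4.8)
The plan is to check the three assertions in turn --- that $F'$ is an e-merging function, that $F'$ dominates $F$, and that neither function attains the value $\infty$ on $[0,\infty)^K$ --- and I would prove the third one first, since it is the only part that requires an actual construction and it makes the other two transparent.

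\emph{Step 1 (finiteness on $[0,\infty)^K$).} Suppose for contradiction that $F(\mathbf{e})=\infty$ for some finite $\mathbf{e}=(e_1,\dots,e_K)$. Set $c_k:=\max(e_k,1)\ge1$; then $\mathbf{c}:=(c_1,\dots,c_K)\ge\mathbf{e}$ componentwise, so $F(\mathbf{c})=\infty$ because $F$ is increasing. On any probability space carrying $K$ independent random variables with $E_k$ equal to $c_k$ with probability $1/c_k$ and to $0$ otherwise, each $E_k$ satisfies $\E^Q[E_k]=1$, hence $E_1,\dots,E_K\in\EEE_Q$; and with probability $\prod_{k=1}^K 1/c_k>0$ we have $(E_1,\dots,E_K)=\mathbf{c}$, on which event $F(E_1,\dots,E_K)=\infty$. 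Therefore $\E^Q[F(E_1,\dots,E_K)]=\infty>1$, contradicting \eqref{eq:def-e}. So $F$ never takes the value $\infty$ on $[0,\infty)^K$, and since $F'$ agrees with $F$ there, neither does $F'$.

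\emph{Step 2 ($F'$ is an e-merging function).} First I would note that $F'$ is increasing and Borel: it coincides with the increasing Borel function $F$ on the Borel set $[0,\infty)^K$ and equals the constant $\infty$ on the complement, and monotonicity between the two pieces is immediate, since any point with a coordinate equal to $\infty$ dominates every point of $[0,\infty)^K$. For the defining inequality, the key observation is that an e-variable is almost surely finite: if $E\in\EEE_Q$ then $\E^Q[E]\le1<\infty$ forces $Q(E=\infty)=0$ (as $E\ge0$). Hence for any $E_1,\dots,E_K\in\EEE_Q$ we have $(E_1,\dots,E_K)\in[0,\infty)^K$ $Q$-almost surely, so $F'(E_1,\dots,E_K)=F(E_1,\dots,E_K)$ a.s., and therefore $\E^Q[F'(E_1,\dots,E_K)]=\E^Q[F(E_1,\dots,E_K)]\le1$.

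\emph{Step 3 (domination).} On $[0,\infty)^K$ the two functions coincide, and on the remainder of $[0,\infty]^K$ we have $F'=\infty\ge F$; hence $F'\ge F$ pointwise, which in particular gives $F'\ge F$ on $[0,\infty)^K$, i.e.\ $F'$ dominates $F$. I do not expect a genuine obstacle in any of this; the only point needing mild care is Step~1, namely verifying that the constructed $E_k$ really are e-variables and that the event $\{(E_1,\dots,E_K)=\mathbf{c}\}$ has positive probability --- which is exactly where the replacement of $\mathbf{e}$ by the coordinatewise maximum with $\mathbf{1}$ is used.
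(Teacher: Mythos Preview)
Your proof is correct and follows essentially the same approach as the paper's: both use that e-variables are almost surely finite (so $F=F'$ a.s.\ on e-variable inputs) and derive the finiteness claim by constructing two-point e-variables that hit the offending point with positive probability; your replacement of $\mathbf{e}$ by $\mathbf{e}\vee\mathbf{1}$ is a tidy way to ensure the probabilities $1/c_k$ are valid, where the paper instead allows the support $\{0,e_k\}$ to degenerate when $e_k\le1$.

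One small slip in Step~2: the sentence ``any point with a coordinate equal to $\infty$ dominates every point of $[0,\infty)^K$'' is false (e.g., $(\infty,0)\not\ge(1,1)$). The correct reason monotonicity holds across the two pieces is simply that $F'(\mathbf{e}')=\infty$ whenever $\mathbf{e}'\notin[0,\infty)^K$, so $F'(\mathbf{e})\le F'(\mathbf{e}')$ is automatic in that case, and the case $\mathbf{e}\notin[0,\infty)^K$, $\mathbf{e}'\in[0,\infty)^K$ with $\mathbf{e}\le\mathbf{e}'$ cannot occur. This does not affect your conclusion.
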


\begin{proof}
  If $E_1,\dots,E_K$ are e-variables,
  each of them is finite a.s.;
  therefore,
  \[
    F(E_1,\dots,E_K)
    =
    F'(E_1,\dots,E_K)
    \quad
    \text{a.s.},
  \]
  and $F'$ is an e-merging function whenever $F$ is.

  For the last statement, we will argue indirectly.
  Suppose $F(e_1,\dots,e_K)=\infty$
  for some $e_1,\dots,e_K\in[0,\infty)$.
  Fix such $e_1,\dots,e_K\in[0,\infty)$
  and let $E_k$, $k\in\{1,\dots,K\}$, be independent random variables
  such that $E_k$ takes values in the set $\{0,e_k\}$
  (of cardinality 2 or 1),
  takes value $e_k$ with a positive probability,
  and has expected value at most 1.
  (For the existence of such random variables, see Lemma~\ref{lem:rich} below.)
  Since $\E[F(E_1,\dots,E_K)]=\infty$,
  $F$ is not an e-merging function.
\end{proof}

As we mentioned in Section~\ref{sec:ie},
Proposition~\ref{prop:infty} continues to hold for ie-merging functions.

\section{Atomless probability spaces}
\label{app:atomless}

% We start from calibrators.
% Similarly to the case of e-values,
% without loss of generality we may fix an atomless probability space $(\Omega,\AAA,Q)$ for all discussions of p-values
% (cf.\ \citet[Section 2]{Vovk/Wang:2019}).

In several of our definitions, such as those of a calibrator or a merging function,
we have a universal quantifier over probability spaces.
Fixing a probability space in those definitions,
we may obtain wider notions.
More generally, in this appendix we will be interested in dependence of our notions
on a chosen statistical model.
We start our discussion from a well-known lemma that we have already used on a few occasions.
(Despite being well-known, the full lemma is rarely stated explicitly;
we could not find a convenient reference in literature.)
Remember that a probability space $(\Omega,\AAA,Q)$ is \emph{atomless}
if it has no \emph{atoms}, i.e., sets $A\in\AAA$
such that $P(A)>0$ and $P(B)\in\{0,P(A)\}$ for any $B\in\AAA$ such that $B\subseteq A$.

\begin{lemma}\label{lem:rich}
  The following three statements are equivalent for any probability space $(\Omega,\AAA,Q)$:
  \begin{enumerate}[(i)]
  \item
    $(\Omega,\AAA,Q)$ is atomless;
  \item
    there is a random variable on $(\Omega,\AAA,Q)$ that is uniformly distributed on $[0,1]$;
  \item
    for any Polish space $S$ and any probability measure $R$ on $S$,
    there is a random element on $(\Omega,\AAA,Q)$ with values in $S$
    that is distributed as $R$.
  \end{enumerate}
\end{lemma}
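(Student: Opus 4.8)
The plan is to prove the cycle of implications (iii)$\Rightarrow$(ii)$\Rightarrow$(i)$\Rightarrow$(iii), since (iii)$\Rightarrow$(ii) is immediate (take $S=[0,1]$ with $R$ the Lebesgue measure) and (ii)$\Rightarrow$(i) is the easy direction: if $(\Omega,\AAA,Q)$ had an atom $A$, then any random variable $U$ would be a.s.\ constant on $A$ up to null sets, so $Q(U\le t)$ would have a jump of size at least $Q(A)>0$, contradicting uniformity. The substantive work is entirely in (i)$\Rightarrow$(iii).

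For (i)$\Rightarrow$(iii) I would proceed in two stages. \emph{Stage 1: construct a uniform random variable on an atomless space.} Using exhaustion, one shows that an atomless probability space admits, for each $n$, a measurable partition into $2^n$ sets of equal measure $2^{-n}$, refining as $n$ increases; the standard device here is to repeatedly halve measure, invoking the fact that on an atomless space every set $B$ with $Q(B)>0$ contains a subset of measure exactly $Q(B)/2$ (itself proved by a Zorn's-lemma or sup argument on measures of subsets, using atomlessness to rule out jumps). Reading off the dyadic digits of which cell $\omega$ lands in at each level defines a measurable map $\Omega\to\{0,1\}^{\N}$ whose pushforward is the fair-coin product measure; composing with the binary-expansion map $\{0,1\}^{\N}\to[0,1]$ yields a uniformly distributed $U$ on $\Omega$. \emph{Stage 2: push $U$ forward to an arbitrary Polish target.} For a Polish space $S$ with Borel probability measure $R$, it is a classical fact (the Borel isomorphism theorem, or for nonatomic $R$ directly) that there is a Borel map $\psi:[0,1]\to S$ with $\psi(\mathrm{Leb})=R$; concretely, when $R$ is atomless one can use that $(S,R)$ is Borel-isomorphic to $([0,1],\mathrm{Leb})$, and when $R$ has atoms one splits off the atomic part and handles it by mapping subintervals to the atoms. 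Then $\psi\circ U$ is the desired $S$-valued random element distributed as $R$.

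The main obstacle is Stage 1: turning atomlessness into the \emph{exact} bisection property ``every positive-measure set contains a subset of exactly half its measure,'' and then iterating it measurably to get a coherent family of dyadic partitions rather than just an abstract existence statement. The cleanest route is to first prove Lyapunov-type continuity — the range of $B\mapsto Q(B\cap A)$ over measurable $B$ is an interval $[0,Q(A)]$ — by a maximality argument, and then build the partitions by transfinite/countable recursion, checking at each step that the new subdivision is compatible with the previous one. Everything after that (Stage 2 and the two easy implications) is standard descriptive-set-theory bookkeeping, for which I would simply cite \citet{Kechris:1995}.
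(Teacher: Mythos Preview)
Your proposal is correct and covers all the necessary implications. The paper's own proof is considerably terser and relies entirely on citations: it establishes (i)$\Leftrightarrow$(ii) by invoking \citet[Proposition~A.27]{Follmer/Schied:2011}, and then proves (ii)$\Rightarrow$(iii) by applying Kuratowski's isomorphism theorem \citep[Theorem~15.6]{Kechris:1995} to reduce the Polish space $S$ to one of $\R$, $\N$, or a finite set, after which the $\R$ case is again dispatched by appeal to \citet{Follmer/Schied:2011}. Your route is logically equivalent but more self-contained: you supply an explicit dyadic-partition construction for (i)$\Rightarrow$(ii) (via the Sierpi\'nski/Lyapunov range-of-measure argument) where the paper simply cites, and your Stage~2 is essentially the same Borel-isomorphism reduction the paper uses, though you handle the atomic part of $R$ by hand rather than folding it into the isomorphism theorem. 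What your approach buys is independence from the F\"ollmer--Schied reference and a clearer picture of why atomlessness suffices; what the paper's approach buys is brevity.
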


Typical examples of a Polish space in item (iii) that are useful for us in this paper
are $\R^K$ and finite sets.

\begin{proof}
  The equivalence between (i) and (ii) is stated in \citet[Proposition~A.27]{Follmer/Schied:2011}.
  It remains to prove that (ii) implies (iii).
  According to Kuratowski's isomorphism theorem \citep[Theorem 15.6]{Kechris:1995},
  $S$ is Borel isomorphic to $\R$, $\N$, or a finite set
  (the last two equipped with the discrete topology).
  % Wikipedia https://en.wikipedia.org/wiki/Standard_Borel_space
  % says that it's Kuratowski's theorem.
  The only nontrivial case is where $S$ is Borel isomorphic to $\R$,
  in which case we can assume $S=\R$.
  It remains to apply \citet[Proposition~A.27]{Follmer/Schied:2011} again.
\end{proof}

If $(\Omega,\AAA)$ is a measurable space and $\QQQ$ is a collection of probability measures on $(\Omega,\AAA)$,
we refer to $(\Omega,\AAA,\QQQ)$ as a \emph{statistical model}. 
We say that it is \emph{rich}
if there exists a random variable on $(\Omega,\AAA)$ that is uniformly distributed on $[0,1]$
under any $Q\in\QQQ$.

\begin{remark}\label{rem:U}
  Intuitively, any statistical model $(\Omega,\AAA,\QQQ)$ can be made rich
  by complementing it with a random number generator
  producing a uniform random value in $[0,1]$:
  we replace $\Omega$ by $\Omega\times[0,1]$,
  $\AAA$ by $\AAA\times\UUU$,
  and each $Q\in\QQQ$ by $Q\times U$,
  where $([0,1],\UUU,U)$ is the standard measurable space $[0,1]$ equipped with the uniform probability measure $U$.
  % A similar notion is introduced in Definition 3.12 of \citet{SSWW19}.
  If $\QQQ=\{Q\}$ contains a single probability measure $Q$,
  being rich is equivalent to being atomless
  (by Lemma~\ref{lem:rich}).
\end{remark}

For a statistical model $(\Omega,\AAA,\QQQ)$,
an \emph{e-variable} is a random variable $E:\Omega\to[0,\infty]$ satisfying
\[
  \sup_{Q\in\QQQ}
  \E^Q[E]
  \le
  1.
\]
(as in Section~\ref{sec:multiple}).
As before,
the values taken by e-variables are \emph{e-values},
and the set of e-variables is denoted by $\EEE_{\QQQ}$.

An \emph{e-merging function for $(\Omega,\AAA,\QQQ)$}
is an increasing Borel function $F:[0,\infty]^K\to[0,\infty]$
such that, % $F\ne0$ and
for all $E_1,\dots,E_K$,
\begin{equation*} % \label{eq:def-e-2}
  (E_1,\dots,E_K)\in\EEE_{\QQQ}^K
  \Longrightarrow
  F(E_1,\dots,E_K) \in \EEE_{\QQQ}.
\end{equation*}
This definition requires that $K$ e-values for $(\Omega,\AAA,\QQQ)$
be transformed into an e-value for $(\Omega,\AAA,\QQQ)$.
Without loss of generality (as in Appendix~\ref{app:infty}),
we replace $[0,\infty]$ by $[0,\infty)$.

\begin{proposition}\label{prop:model-space}
  Let $F: [0,\infty)^K \to [0,\infty)$ be an increasing  Borel function.
  The following statements are equivalent:
  \begin{enumerate}[(i)]
  \item $F$ is an e-merging function for some rich statistical model;
  \item $F$ is an e-merging function for all statistical models;
  \item $F$ is an e-merging function.
  \end{enumerate} 
\end{proposition}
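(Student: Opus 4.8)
The plan is to establish the cycle of implications (iii) $\Rightarrow$ (ii) $\Rightarrow$ (i) $\Rightarrow$ (iii); only the last of these requires real work, the first two being essentially bookkeeping.

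For (iii) $\Rightarrow$ (ii): let $(\Omega,\AAA,\QQQ)$ be any statistical model and $E_1,\dots,E_K\in\EEE_\QQQ$. For each fixed $Q\in\QQQ$ we have $\E^Q[E_k]\le1$, so $E_1,\dots,E_K\in\EEE_Q$, and since $F$ is an e-merging function in the sense of Section~\ref{sec:e}, $\E^Q[F(E_1,\dots,E_K)]\le1$. Taking the supremum over $Q\in\QQQ$ gives $F(E_1,\dots,E_K)\in\EEE_\QQQ$, so $F$ is an e-merging function for $(\Omega,\AAA,\QQQ)$. For (ii) $\Rightarrow$ (i) it suffices to exhibit one rich statistical model: the unit interval $[0,1]$ with its Borel $\sigma$-algebra and the single probability measure given by Lebesgue measure is rich, the identity map being the required uniform random variable.

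The substantial implication is (i) $\Rightarrow$ (iii). Fix a rich statistical model $(\Omega,\AAA,\QQQ)$ for which $F$ is an e-merging function, a random variable $W$ on $(\Omega,\AAA)$ that is uniformly distributed on $[0,1]$ under every $Q\in\QQQ$, and some $Q_0\in\QQQ$. Let $(\Omega',\AAA',Q')$ be an arbitrary probability space and $E'_1,\dots,E'_K\in\EEE_{Q'}$ arbitrary e-variables valued in $[0,\infty)$, and let $R$ be the joint distribution of $(E'_1,\dots,E'_K)$, a probability measure on the Polish space $[0,\infty)^K$. Applying the implication (ii) $\Rightarrow$ (iii) of Lemma~\ref{lem:rich} to $[0,1]$ equipped with Lebesgue measure, we obtain a Borel map $g:[0,1]\to[0,\infty)^K$ pushing Lebesgue measure forward to $R$. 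Set $(E_1,\dots,E_K):=g(W)$.

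The crux — and the step I expect to need the most care — is that, because $W$ has the same (uniform) law under \emph{every} $Q\in\QQQ$, the vector $g(W)$ has law $R$ under every $Q\in\QQQ$ simultaneously; hence $\E^Q[E_k]=\E^{Q'}[E'_k]\le1$ for all $Q\in\QQQ$ and all $k$, so $(E_1,\dots,E_K)\in\EEE_\QQQ^K$. By (i), $F(E_1,\dots,E_K)\in\EEE_\QQQ$, and in particular $\E^{Q_0}[F(g(W))]\le1$. Finally, under $Q_0$ the random variable $F(g(W))=(F\circ g)(W)$ has the same law as $F(E'_1,\dots,E'_K)$ under $Q'$ (both being the pushforward of $R$ under $F$), so $\E^{Q'}[F(E'_1,\dots,E'_K)]=\E^{Q_0}[F(g(W))]\le1$; since $F$ is a nonnegative Borel function, $F(E'_1,\dots,E'_K)$ is an e-variable on $(\Omega',\AAA',Q')$. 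As the probability space and the e-variables were arbitrary, $F$ is an e-merging function, which is (iii), closing the cycle.
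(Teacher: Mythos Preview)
Your proof is correct and follows essentially the same approach as the paper: both hinge on realizing an arbitrary joint distribution of e-variables inside the rich model via the uniform random variable $W$ (so that the transferred e-variables are valid under \emph{every} $Q\in\QQQ$ simultaneously), and then transferring the expectation bound back. The only cosmetic difference is organizational: the paper runs the cycle as (ii)$\Rightarrow$(iii)$\Rightarrow$(i)$\Rightarrow$(ii), proving the substantive step (i)$\Rightarrow$(ii) by first enriching an arbitrary model via a product with $[0,1]$ and invoking an auxiliary equality of suprema between rich models, whereas you go straight from (i) to (iii) without the product construction --- arguably a bit more direct.
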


\begin{proof}
  Let us first check that, for any two rich statistical models
  $(\Omega,\AAA,\QQQ)$ and $(\Omega',\AAA',\QQQ')$,
  we always have
  \begin{equation}\label{eq:equal}
    \sup
    \left\{
      \E^Q[F(\mathbf{E})]
      \mid
      Q\in\QQQ, \enspace \mathbf{E}\in\EEE_{\QQQ}^K
    \right\}
    =
    \sup
    \left\{
      \E^{Q'}[F(\mathbf{E}')]
      \mid
      Q'\in\QQQ', \enspace \mathbf{E}'\in\EEE_{\QQQ'}^K
    \right\}.
  \end{equation}
  Suppose
  \[
    \sup
    \left\{
      \E^Q[F(\mathbf{E})] \mid Q\in\QQQ, \enspace \mathbf{E}\in\EEE_{\QQQ}^K
    \right\}
    >
    c
  \]
  for some constant $c$.
  Then there exist $\mathbf{E}\in\EEE^K_{\QQQ}$ and $Q\in\QQQ$ such that $\E^Q[F(\mathbf{E})] > c$.
  Take a random vector $\mathbf{E}'=(E_1',\dots,E_K')$ on $(\Omega',\AAA')$
  such that $\mathbf{E}'$ is distributed under each $Q'\in\QQQ'$ identically to the distribution of $\mathbf{E}$ under $Q$.
  This is possible as $\QQQ'$ is rich
  (by Lemma~\ref{lem:rich} applied to the probability space $([0,1],\UUU,U)$, $U$ being the uniform probability measure).
  By construction, $\mathbf{E}'\in\EEE_{\QQQ'}^K$ and $E^{Q'}[F(\mathbf{E}')] > c$ for all $Q'\in\QQQ'$.
  This shows
  \[
    \sup
    \left\{
      \E^Q[F(\mathbf{E})]
      \mid
      Q\in\QQQ, \enspace \mathbf{E}\in\EEE_{\QQQ}^K
    \right\}
    \le
    \sup
    \left\{
      \E^{Q'}[F(\mathbf{E}')]
      \mid
      Q'\in\QQQ', \enspace \mathbf{E}'\in\EEE_{\QQQ'}^K
    \right\},
  \]
  and we obtain equality by symmetry.

  The implications $\text{(ii)}\Rightarrow\text{(iii)}$ and $\text{(iii)}\Rightarrow\text{(i)}$ are obvious
  (remember that, by definition an e-merging function
  is an e-merging function for all singleton statistical models).
  To check $\text{(i)}\Rightarrow\text{(ii)}$, suppose $F$ is an e-merging function for some rich statistical model.
  Consider any statistical model.
  Its product with the uniform probability measure on $[0,1]$ will be a rich statistical model
  (cf.\ Remark \ref{rem:U}).
  It follows from \eqref{eq:equal} that $F$ will be an e-merging function for the product.
  Therefore, it will be an e-merging function for the original statistical model.
\end{proof}

\begin{remark}
  The assumption of being rich is essential
  in item (i) of Proposition~\ref{prop:model-space}.
  For instance, if we take $\QQQ := \{\delta_\omega\mid \omega\in\Omega\}$,
  where $\delta_{\omega}$ is the point-mass at $\omega$,
  then $\EEE_{\QQQ}$ is the set of all random variables taking values in $[0,1]$.
  In this case, the maximum of e-variables is still an e-variable,
  but the maximum function is not a valid e-merging function
  as seen from Theorem~\ref{thm:iff}.
\end{remark}

An \emph{ie-merging function for $(\Omega,\AAA,\QQQ)$}
is an increasing Borel function $F:[0,\infty]^K\to[0,\infty]$
such that, for all $E_1,\dots,E_K\in\EEE_{\QQQ}$
that are independent under any $Q\in\QQQ$,
we have $F(E_1,\dots,E_K) \in \EEE_{\QQQ}$.
The proof of Proposition~\ref{prop:model-space} also works
for ie-merging functions.

\begin{proposition}\label{prop:model-space-i}
  Proposition~\ref{prop:model-space} remains true
  if all entries of ``e-merging function'' are replaced by ``ie-merging function''.
\end{proposition}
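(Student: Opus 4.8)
The plan is to run the proof of Proposition~\ref{prop:model-space} essentially verbatim, carrying the extra independence constraint along at each step. For a statistical model $(\Omega,\AAA,\QQQ)$, write $\iEEE_{\QQQ}^K$ for the set of $K$-tuples in $\EEE_{\QQQ}^K$ that are component-wise independent under every $Q\in\QQQ$. The first step is to establish the ie-analogue of \eqref{eq:equal}: for any two rich statistical models $(\Omega,\AAA,\QQQ)$ and $(\Omega',\AAA',\QQQ')$,
\[
  \sup\left\{\E^Q[F(\mathbf{E})]\mid Q\in\QQQ,\;\mathbf{E}\in\iEEE_{\QQQ}^K\right\}
  =
  \sup\left\{\E^{Q'}[F(\mathbf{E}')]\mid Q'\in\QQQ',\;\mathbf{E}'\in\iEEE_{\QQQ'}^K\right\}.
\]

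To prove this I would repeat the transfer argument from the proof of Proposition~\ref{prop:model-space}: if the left-hand supremum exceeds a constant $c$, pick $\mathbf{E}\in\iEEE_{\QQQ}^K$ and $Q\in\QQQ$ with $\E^Q[F(\mathbf{E})]>c$, and use richness of $\QQQ'$ (via Lemma~\ref{lem:rich} applied to $([0,1],\UUU,U)$, realizing the prescribed law on the Polish space $[0,\infty)^K$) to produce a random vector $\mathbf{E}'$ on $(\Omega',\AAA')$ whose law under every $Q'\in\QQQ'$ coincides with the law of $\mathbf{E}$ under $Q$. The only point not already contained in that proof is that $\mathbf{E}'$ is again component-wise independent under every $Q'$; but this is automatic, since component-wise independence of a random vector is a property of its joint law, and the common law of $\mathbf{E}'$ under each $Q'$ equals the law of $\mathbf{E}$ under $Q$, which is a product measure. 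Hence $\mathbf{E}'\in\iEEE_{\QQQ'}^K$ and $\E^{Q'}[F(\mathbf{E}')]>c$ for all $Q'$, giving one inequality, and the other follows by symmetry.

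With this identity in hand, the three implications are obtained exactly as in Proposition~\ref{prop:model-space}. The implications (ii)$\Rightarrow$(iii) and (iii)$\Rightarrow$(i) are trivial (an ie-merging function is by definition an ie-merging function for every singleton model). For (i)$\Rightarrow$(ii): if $F$ is an ie-merging function for some rich model, take an arbitrary statistical model and form its product with $([0,1],\UUU,U)$ as in Remark~\ref{rem:U}; the product is rich, and applying the displayed identity between that product and the given rich model shows that the corresponding supremum for the product is also $\le 1$, i.e.\ $F$ is an ie-merging function for the product. Finally, any $\mathbf{E}\in\iEEE_{\QQQ}^K$ on the original model, regarded as functions on $\Omega\times[0,1]$ that do not depend on the second coordinate, remains component-wise independent under each $Q\times U$ and keeps the same joint law, so $\E^Q[F(\mathbf{E})]=\E^{Q\times U}[F(\mathbf{E})]\le 1$; hence $F$ is an ie-merging function for $(\Omega,\AAA,\QQQ)$.

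I do not expect a real obstacle here: the only thing that genuinely needs to be checked, rather than quoted from the proof of Proposition~\ref{prop:model-space}, is that the distribution-matching construction and the passage to the product model both preserve component-wise independence. Both are immediate once one recalls that independence of a random vector is determined by its joint distribution, that Lemma~\ref{lem:rich} realizes any prescribed Borel law (in particular a product law) on a rich model, and that adjoining an independent uniform coordinate does not alter the joint law of variables that ignore it.
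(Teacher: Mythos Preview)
Your proposal is correct and follows essentially the same approach as the paper: the paper's proof consists of two sentences stating that the changes to the proof of Proposition~\ref{prop:model-space} are minimal, with the only point to note being that the components of the transferred vector $\mathbf{E}'$ remain independent under every $Q'\in\QQQ'$. You have spelled this out in more detail (including the observation that independence is a property of the joint law and that passing to the product model preserves it), but the argument is the same.
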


\begin{proof}
  The changes to the proof of Proposition~\ref{prop:model-space} are minimal.
  In \eqref{eq:equal}, the components of $\mathbf{E}$ and $\mathbf{E}'$ should be assumed to be independent
  under any probability measure in $\QQQ$ and $\QQQ'$, respectively.
  The components of the vector $\mathbf{E}'$ constructed from $\mathbf{E}$ and $Q$ will be independent
  under any $Q'\in\QQQ'$.
\end{proof}

Proposition~\ref{prop:model-space} shows that in the definition of an e-merging function
it suffices to require that \eqref{eq:def-e} hold for a fixed atomless probability space $(\Omega,\AAA,Q)$.
Proposition~\ref{prop:model-space-i} extends this observation to the definition of an ie-merging function.

We can state similar propositions in the case of calibrators.
A \emph{p-variable for a statistical model $(\Omega,\AAA,\QQQ)$}
is a random variable $P:\Omega\to[0,\infty)$ satisfying
\[
  \forall\epsilon\in(0,1) \:
  \forall Q\in\QQQ:
  Q(P\le\epsilon)
  \le
  \epsilon.
\]
The set of p-variables for $(\Omega,\AAA,\QQQ)$ is denoted by $\PPP_{\QQQ}$.
A decreasing function $f:[0,1]\to[0,\infty]$ is a \emph{calibrator for $(\Omega,\AAA,\QQQ)$}
if, for any p-variable $P\in\PPP_{\QQQ}$, $f(P)\in\EEE_{\QQQ}$.

\begin{proposition}\label{prop:model-calibrator}
  Let $f: [0,1] \to [0,\infty]$ be a decreasing  Borel function.
  The following statements are equivalent:
  \begin{enumerate}[(i)]
  \item $f$ is a calibrator for some rich statistical model;
  \item $f$ is a calibrator for all statistical models;
  \item $f$ is a calibrator.
  \end{enumerate} 
\end{proposition}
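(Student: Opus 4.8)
The plan is to run the same three-step cycle (iii)$\Rightarrow$(ii)$\Rightarrow$(i)$\Rightarrow$(iii) used in Proposition~\ref{prop:model-space}, but the calibrator case is lighter because Proposition~\ref{prop:p-to-e} already supplies an intrinsic characterization ($\int_0^1 f\le1$) that I can latch onto.

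For (iii)$\Rightarrow$(ii): let $(\Omega,\AAA,\QQQ)$ be an arbitrary statistical model and $P\in\PPP_{\QQQ}$. For each $Q\in\QQQ$, $P$ is a p-variable on the probability space $(\Omega,\AAA,Q)$, so, $f$ being a calibrator, $f(P)\in\EEE_Q$, i.e.\ $\E^Q[f(P)]\le1$. Taking the supremum over $Q\in\QQQ$ gives $f(P)\in\EEE_{\QQQ}$, so $f$ is a calibrator for $(\Omega,\AAA,\QQQ)$. The implication (ii)$\Rightarrow$(i) is immediate, since a rich statistical model is in particular a statistical model.

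The only step with real content is (i)$\Rightarrow$(iii). Suppose $f$ is a calibrator for some rich statistical model $(\Omega,\AAA,\QQQ)$. By richness there is a random variable $U$ on $(\Omega,\AAA)$ that is uniformly distributed on $[0,1]$ under \emph{every} $Q\in\QQQ$; hence $Q(U\le\epsilon)=\epsilon$ for all $\epsilon\in(0,1)$ and all $Q$, so $U\in\PPP_{\QQQ}$. Since $f$ is a calibrator for this model, $f(U)\in\EEE_{\QQQ}$, so $\E^Q[f(U)]\le1$ for (any, hence some) $Q\in\QQQ$. But $\E^Q[f(U)]=\int_0^1 f(u)\,\dd u$ irrespective of $Q$, so $\int_0^1 f\le1$, and Proposition~\ref{prop:p-to-e} then yields that $f$ is a calibrator. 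This closes the cycle.

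I do not expect a serious obstacle; the two points needing care are (a) that richness provides a single random variable that is uniform simultaneously under all $Q\in\QQQ$, so that it genuinely lies in $\PPP_{\QQQ}$, and (b) the extended-real-valued bookkeeping when $f(0)=\infty$ or $\int_0^1 f=\infty$: since $\{U=0\}$ is $Q$-null, the identity $\E^Q[f(U)]=\int_0^1 f$ still holds as an equality of values in $[0,\infty]$, and non-integrability simply forces $\E^Q[f(U)]=\infty>1$, contradicting (i). (If a proof parallel to that of Proposition~\ref{prop:model-space} is preferred, one can instead show that $\sup\{\E^Q[f(P)]\mid Q\in\QQQ,\ P\in\PPP_{\QQQ}\}$ is the same for all rich models, transporting the $Q$-law of $P$ to a random variable distributed that way under every $Q'\in\QQQ'$; but invoking Proposition~\ref{prop:p-to-e} is shorter.)
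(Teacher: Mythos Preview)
Your argument is correct. The paper does not actually give a proof of this proposition; it is stated immediately after Proposition~\ref{prop:model-space-i} with the implicit understanding that the same transport argument (showing that $\sup\{\E^Q[f(P)]\mid Q\in\QQQ,\ P\in\PPP_{\QQQ}\}$ is constant across rich models) goes through. You sketch that route in your final parenthetical, but your main line---using richness to produce a single uniform $U\in\PPP_{\QQQ}$, reading off $\int_0^1 f\le1$, and invoking Proposition~\ref{prop:p-to-e}---is a legitimate and cleaner shortcut specific to the calibrator case. The transport argument is more uniform (it does not rely on the explicit characterization $\int_0^1 f\le1$ and so generalizes mechanically to e-to-p calibrators and merging functions), whereas your route exploits that calibrators already have an intrinsic criterion independent of any ambient model.
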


We refrain from stating the obvious analogue of Proposition~\ref{prop:model-calibrator}
for e-to-p calibrators.

\section{Domination structure of the class of e-merging functions}
\label{app:domination}

In this appendix we completely describe the domination structure of the symmetric e-merging functions,
showing that \eqref{eq:convex} is the minimal complete class of symmetric e-merging functions.
We start, however, with establishing some fundamental facts about e-merging functions.

First, we note that for an increasing Borel function $F:[0,\infty)^K\to[0,\infty]$,
its upper semicontinuous version $F^*$ is given by
\begin{align}\label{eq:usc}
  F^*(\mathbf{e})
  =
  \lim_{\epsilon\downarrow 0}
  F(\mathbf{e} + \epsilon\mathbf 1),
  \quad
  \mathbf{e}\in[0,\infty)^K;
\end{align}
remember that $\mathbf{1}:=(1,\dots,1)$.
Clearly, $F^*$ is increasing, is upper semicontinuous (by a simple compactness argument),
and satisfies $F^*\ge F$.

On the other hand, for an upper semicontinuous (and so automatically Borel) function $F:[0,\infty)^K\to[0,\infty]$,
its increasing version $\widetilde F$ is given by
\begin{equation}\label{eq:inc}
  \widetilde F (\mathbf{e})
  =
  \sup_{\mathbf{e}' \le \mathbf{e}} F(\mathbf{e}'),
  \quad
  \mathbf{e}\in[0,\infty)^K,
\end{equation}
where $\le$ is component-wise inequality.
Clearly, $\widetilde F$ is increasing, upper semicontinuous, and $\widetilde F \ge F$.
Notice that the supremum in \eqref{eq:inc} is attained
(as the supremum of an upper semicontinuous function on a compact set),
and so we can replace $\sup$ by $\max$.

\begin{proposition}\label{prop:usc}
  If $F$ is an e-merging function, then its upper semicontinuous version $F^*$ in \eqref{eq:usc} is also an e-merging function.
\end{proposition}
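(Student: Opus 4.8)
The plan is to verify directly that $F^*$ satisfies the defining expectation property of an e-merging function, since $F^*$ is already known to be increasing and upper semicontinuous (hence Borel), and it is finite-valued because $F^*(\mathbf e)\le F(\mathbf e+\mathbf 1)<\infty$ for every $\mathbf e\in[0,\infty)^K$. So, fixing a probability space $(\Omega,\AAA,Q)$ and e-variables $E_1,\dots,E_K\in\EEE_Q$, it remains to show $\E^Q[F^*(\mathbf E)]\le 1$, where $\mathbf E:=(E_1,\dots,E_K)$. I would exploit the pointwise identity $F^*(\mathbf E)=\lim_{\delta\downarrow 0}F(\mathbf E+\delta\mathbf 1)$ from \eqref{eq:usc}, which is a \emph{decreasing} limit since $F$ is increasing, first bounding $\E^Q[F(\mathbf E+\delta\mathbf 1)]$ for each fixed $\delta>0$ and then letting $\delta\downarrow 0$.

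For the key step I would pass to an enlarged probability space carrying a random variable $U$ that is uniform on $[0,1]$ and independent of $\mathbf E$ --- e.g.\ replace $(\Omega,\AAA,Q)$ by its product with $([0,1],\mathcal B,\mathrm{Leb})$ and let $U$ be the second coordinate; this changes neither $\E[F^*(\mathbf E)]$ nor the e-variable status of each $E_k$. Fix $\delta>0$ and set $\mathbf E^\delta:=(\mathbf E+\delta\mathbf 1)/(1+\delta)$; each component has expectation at most $(1+\delta)/(1+\delta)=1$, so $\mathbf E^\delta\in\EEE_Q^K$. Let $B:=\{U\le 1/(1+\delta)\}$, independent of $\mathbf E$ with $Q(B)=1/(1+\delta)$, and put $\mathbf E':=(1+\delta)\mathbf E^\delta\,\mathbf 1_B$. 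Then $\E[E_k']=(1+\delta)\E[E_k^\delta]\,Q(B)=\E[E_k^\delta]\le1$, so $\mathbf E'\in\EEE_Q^K$, and since $F$ is an e-merging function, $\E[F(\mathbf E')]\le 1$. On $B$ we have $\mathbf E'=\mathbf E+\delta\mathbf 1$ while off $B$ we have $\mathbf E'=\mathbf 0$; using that $B$ is independent of the $\mathbf E$-measurable variable $F(\mathbf E+\delta\mathbf 1)$,
\[
  1\ge\E[F(\mathbf E')]
  =\frac{1}{1+\delta}\,\E[F(\mathbf E+\delta\mathbf 1)]+\frac{\delta}{1+\delta}\,F(\mathbf 0),
\]
whence $\E[F(\mathbf E+\delta\mathbf 1)]\le(1+\delta)-\delta F(\mathbf 0)\le 1+\delta$, using $F(\mathbf 0)\ge 0$.

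To finish, take $\delta=1/n$: the variables $F(\mathbf E+\frac1n\mathbf 1)$ decrease pointwise to $F^*(\mathbf E)$ and are all dominated by $F(\mathbf E+\mathbf 1)$, which is integrable since the previous bound gives $\E[F(\mathbf E+\mathbf 1)]\le 2$. Dominated (equivalently, monotone) convergence then yields $\E[F^*(\mathbf E)]=\lim_{n\to\infty}\E[F(\mathbf E+\frac1n\mathbf 1)]\le\lim_{n\to\infty}(1+\frac1n)=1$; as this holds on the enlarged space it also holds on the original one, and $(\Omega,\AAA,Q)$ and $\mathbf E$ were arbitrary.

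The main obstacle is the step that manufactures a vector of e-variables out of $\mathbf E+\delta\mathbf 1$: one cannot merely rescale, because an e-merging function need not be positively homogeneous, which is why I use the ``mixing'' device of multiplying the rescaled $\mathbf E^\delta$ by $(1+\delta)\mathbf 1_B$ for an independent event $B$ --- the same idea that drives the proof of Proposition~\ref{prop:M}. The only technical caveat is that the given space may carry no randomness independent of $\mathbf E$ (the $E_k$ could exhaust it); enlarging the space --- or, equivalently, invoking Proposition~\ref{prop:model-space} --- removes this difficulty without affecting any of the quantities involved.
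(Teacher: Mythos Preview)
Your proof is correct and follows essentially the same route as the paper's: both construct the e-variable vector $(\mathbf E+\delta\mathbf 1)\,\id_B$ for an independent event $B$ of suitable probability, apply the e-merging property of $F$, rearrange to bound $\E[F(\mathbf E+\delta\mathbf 1)]$, and let $\delta\downarrow 0$. The only cosmetic differences are your parameterization $Q(B)=1/(1+\delta)$ versus the paper's $Q(A_\epsilon)=1-\epsilon$ (these are equivalent under $\epsilon=\delta/(1+\delta)$), your explicit enlargement of the probability space (which the paper leaves implicit), and your use of dominated convergence where the paper invokes Fatou --- though in fact, since $F^*(\mathbf E)\le F(\mathbf E+\delta\mathbf 1)$ for every $\delta>0$, the bare inequality $\E[F^*(\mathbf E)]\le 1+\delta$ already suffices without any limit theorem.
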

\begin{proof}
  Take $\mathbf{E}\in\EEE^K_Q$.
  For every rational $\epsilon \in (0,1)$, let $A_\epsilon$ be an event independent of $\mathbf{E}$ with $Q(A_\epsilon)=1-\epsilon$,
  and $\mathbf{E}_{\epsilon} = (\mathbf{E}+\epsilon\mathbf 1)\id_{A_\epsilon}$
  (of course, here we use the convention that $\mathbf{E}_\epsilon=\mathbf{0}:=(0,\dots,0)$
  if the event $A_{\epsilon}$ does not occur).
  For each $\epsilon$, $\E[\mathbf{E}_\epsilon] \le (1-\epsilon) (\mathbf 1+\epsilon \mathbf{1}) \le \mathbf{1}$. 
  Therefore, $\mathbf{E}_\epsilon \in \EEE^K_Q$  and hence
  \begin{align*}
    1
    \ge
    \E[F(\mathbf{E}_\epsilon)]
    =
    (1-\epsilon)
    \E\left[F(\mathbf{E}+\epsilon\mathbf{1})\right]
    +
    \epsilon F(\mathbf{0}),
  \end{align*}
  which implies
  \[
    \E
    \left[
      F(\mathbf{E}+\epsilon\mathbf{1})
    \right]
    \le
    \frac{1 - \epsilon F(\mathbf{0})}{1-\epsilon}.
  \]
  Fatou's lemma yields
  \begin{align*}
    \E[F^*(\mathbf{E})]
    =
    \E
    \left[
      \lim_{\epsilon \downarrow 0}
      F(\mathbf{E}+\epsilon\mathbf{1})
    \right]
    \le
    \lim_{\epsilon \downarrow 0}
    \E
    \left[
      F(\mathbf{E}+\epsilon\mathbf{1})
    \right]
    \le
    \lim_{\epsilon \downarrow 0}
    \frac{1 - \epsilon F(\mathbf 0)}{1-\epsilon}
    =
    1.
  \end{align*}
  Therefore, $F^*$ is an e-merging function. 
\end{proof}

\begin{corollary}\label{cor:usc}
  An admissible e-merging function is always upper semicontinuous. 
\end{corollary}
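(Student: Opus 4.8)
The plan is to read the corollary off Proposition~\ref{prop:usc} together with the elementary properties of the operation $F\mapsto F^*$ recorded just after \eqref{eq:usc}. So let $F$ be an admissible e-merging function, and form its upper semicontinuous version $F^*$ as in \eqref{eq:usc}. From the discussion preceding Proposition~\ref{prop:usc} we already know that $F^*$ is increasing, upper semicontinuous, and satisfies $F^*\ge F$ pointwise on $[0,\infty)^K$; and Proposition~\ref{prop:usc} tells us that $F^*$ is again an e-merging function.

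Now I would argue by contradiction. Suppose $F$ is not upper semicontinuous. Then $F\ne F^*$, so there exists some $\mathbf{e}\in[0,\infty)^K$ with $F^*(\mathbf{e})>F(\mathbf{e})$. Together with $F^*\ge F$ everywhere, this is exactly the statement that $F^*$ strictly dominates $F$ in the sense of Section~\ref{sec:e}. Since $F^*$ is an e-merging function, this contradicts the assumed admissibility of $F$. Hence $F=F^*$, i.e., $F$ is upper semicontinuous, which is the claim.

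I do not expect any real obstacle: the substantive work is entirely inside Proposition~\ref{prop:usc} (the Fatou-lemma estimate using the thinned perturbations $(\mathbf{E}+\epsilon\mathbf{1})\id_{A_\epsilon}$), and what remains here is purely a matter of unwinding the definitions of ``admissible'' and of ``upper semicontinuous version''. The only point worth a second glance is that admissibility in Section~\ref{sec:e} is phrased in terms of domination on $[0,\infty)^K$, which is precisely the domain on which \eqref{eq:usc} defines $F^*$ and on which the inequality $F^*\ge F$ was established, so the two conventions match and no extra care is needed at the boundary values.
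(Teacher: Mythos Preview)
Your proof is correct and follows essentially the same argument as the paper: apply Proposition~\ref{prop:usc} to obtain that $F^*$ is an e-merging function dominating $F$, then use admissibility to force $F=F^*$. The paper states this directly rather than via contradiction, but the content is identical.
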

\begin{proof} 
  Let $F$ be an admissible e-merging function. 
  Using Proposition \ref{prop:usc}, we obtain that $F^*\ge F$ is an e-merging function.
  Admissibility of $F$ forces $F=F^*$,
  implying that $F$ is upper semicontinuous.
\end{proof}

\begin{proposition}\label{prop:inc}
  If $F:[0,\infty)^K\to[0,\infty]$ is an upper semicontinuous function
  satisfying $\E[F(\mathbf{E})]\le 1$ for all $\mathbf{E}\in\EEE^K_Q$,
  then its increasing version $\widetilde F$ in \eqref{eq:inc} is an e-merging function.
\end{proposition}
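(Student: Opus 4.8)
The plan is to deduce the inequality $\E[\widetilde F(\mathbf E)]\le 1$ from the corresponding inequality for $F$ by replacing each e-variable vector $\mathbf E$ by a pointwise-smaller vector $\mathbf E'$ at which $F$ already attains the value $\widetilde F(\mathbf E)$.

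First I would record the routine facts: $\widetilde F$ is increasing and, being upper semicontinuous (as noted right after \eqref{eq:inc}), Borel; moreover the supremum in \eqref{eq:inc} is attained, so for every $\mathbf e\in[0,\infty)^K$ we have $\widetilde F(\mathbf e)=\max\{F(\mathbf e')\mid \mathbf 0\le\mathbf e'\le\mathbf e\}$, the maximum being over the compact box $[\mathbf 0,\mathbf e]=\prod_{k=1}^K[0,e_k]$. The core step is then to produce a Borel map $g:[0,\infty)^K\to[0,\infty)^K$ with $g(\mathbf e)\le\mathbf e$ componentwise and $F(g(\mathbf e))=\widetilde F(\mathbf e)$ for every $\mathbf e$. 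The constraint correspondence $\mathbf e\mapsto[\mathbf 0,\mathbf e]$ is compact-valued and (both upper and lower) hemicontinuous, and $F$ is upper semicontinuous, so a maximum-theorem argument (Berge) shows that the argmax correspondence $\mathbf e\mapsto\{\mathbf e'\in[\mathbf 0,\mathbf e]\mid F(\mathbf e')=\widetilde F(\mathbf e)\}$ is nonempty, compact-valued, and upper hemicontinuous, hence Borel-measurable; a measurable selection theorem (e.g.\ \citet{Rieder:1978}) then supplies the desired $g$.

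With $g$ in hand, fix any probability space $(\Omega,\AAA,Q)$ and any $\mathbf E=(E_1,\dots,E_K)\in\EEE_Q^K$. Since e-variables are finite almost surely, $\mathbf E\in[0,\infty)^K$ a.s., and I set $\mathbf E':=g(\mathbf E)=(E_1',\dots,E_K')$ (defined arbitrarily, say as $\mathbf 0$, on the null set where some $E_k=\infty$). Then $0\le E_k'\le E_k$ a.s., so $\E^Q[E_k']\le\E^Q[E_k]\le1$, i.e.\ $\mathbf E'\in\EEE_Q^K$; applying the hypothesis to $\mathbf E'$ gives $\E^Q[\widetilde F(\mathbf E)]=\E^Q[F(\mathbf E')]\le1$. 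Since $\widetilde F$ is increasing and Borel and this holds for every $Q$ and every $\mathbf E\in\EEE_Q^K$, $\widetilde F$ is an e-merging function (and $\widetilde F\ge F$, as already observed). The only delicate point is the selection step: one must verify hemicontinuity of the box-valued correspondence and invoke versions of the maximum theorem and of a measurable selection theorem that tolerate the $[0,\infty]$-valued (possibly $+\infty$) objective $F$; everything else is immediate. One could alternatively avoid this machinery by approximating $F$ on compact boxes by continuous functions, but the selection route is shorter.
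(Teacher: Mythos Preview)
Your proposal is correct and follows essentially the same approach as the paper: construct a Borel selector $g$ with $g(\mathbf e)\le\mathbf e$ and $F(g(\mathbf e))=\widetilde F(\mathbf e)$ via a measurable selection theorem (both you and the paper cite \citet{Rieder:1978}), then apply the hypothesis to $g(\mathbf E)\in\EEE_Q^K$. The only real difference is that the paper first restricts to e-variable vectors supported in a compact box $[0,M]^K$, verifies Rieder's conditions there directly (compactness of the relevant level and projection sets), and then passes to unbounded $\mathbf E$ by monotone convergence; this sidesteps the hemicontinuity and $[0,\infty]$-valued-objective issues you flag as the delicate point in your global (non-compact) construction.
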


\begin{proof}
  Take any $\mathbf{E}\in \mathcal{E}_Q^K$ supported in $[0,M]^K$ for some $M>0$.
  Define
  \[
    u(\mathbf{x},\mathbf{y})
    :=
    F(\mathbf{y})
    \text{ and }
    D
    :=
    \left\{
      (\mathbf{x},\mathbf{y})\in[0,M]^{K}\times[0,M]^{K} \mid \mathbf{y}\le\mathbf{x}
    \right\};
  \]
  as a closed subset of a compact set, $D$ is compact.
  Since $F$ is upper semi-continuous,
  the sets
  \[
    U_c := \{(\mathbf{x},\mathbf{y})\in D \mid F(\mathbf{y})\ge c\}
    \text{ and }
    U_c(\mathbf{x}) := \{\mathbf{y} \mid (\mathbf{x},\mathbf{y}) \in U_c\}
  \]
  are all compact (and therefore, Borel).
  Moreover, for each compact subset $\mathcal{K}$ of $[0,M]^K$, 
  the set
  \[
    \left\{
      \mathbf{x} \in [0,M]^K \mid \exists\mathbf{y}: (\mathbf{x},\mathbf{y}) \in U_c \;\&\;\mathbf{y}\in\mathcal{K}
    \right\}
  \]
  is compact (and therefore, Borel).
  These conditions justify the use of Theorem~4.1 of \citet{Rieder:1978},
  which gives the existence of a Borel function $g:[0,M]^K \to [0,M]^K$
  such that $F(g(\mathbf e)) = \widetilde F (\mathbf e )$ and $g(\mathbf e)\le \mathbf e$ for each $\mathbf e\in [0,M]^K$.
  Hence, $g(\mathbf{E}) \in \mathcal{E}_Q^K$, and we have
  \[
    \E[\widetilde F (\mathbf{E})]
    =
    \E[F(g(\mathbf{E}))]
    \le
    1.
  \]
  An unbounded $\mathbf{E} \in \mathcal{E}_Q^K$ can be approximated
  by an increasing sequence of bounded random vectors in $\mathcal E_Q^K$,
  and the monotone convergence theorem implies $\E[\widetilde F (\mathbf{E})] \le 1$.
\end{proof}

\begin{proposition}\label{prop:gap}
  An admissible e-merging function is not strictly dominated by any Borel function $G$
  satisfying $\E[G(\mathbf{E})]\le 1$ for all $\mathbf{E}\in\EEE^K_Q$.
\end{proposition}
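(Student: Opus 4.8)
The plan is to argue by contradiction. Suppose $F$ is an admissible e-merging function that is strictly dominated by a Borel function $G$ with $\E[G(\mathbf{E})]\le1$ for all $\mathbf{E}\in\EEE^K_Q$; I will construct a genuine e-merging function that also strictly dominates $F$, contradicting admissibility. First I would record what we already know: since $F$ is admissible it is increasing (by definition) and upper semicontinuous (Corollary~\ref{cor:usc}), and it is finite on $[0,\infty)^K$ (Proposition~\ref{prop:infty}); moreover, by strict domination there is a point $\mathbf{e}_0\in[0,\infty)^K$ with $b:=G(\mathbf{e}_0)>F(\mathbf{e}_0)=:a$, where $a<\infty$.

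The key step is to modify $F$ at the single point $\mathbf{e}_0$: define $G_0:[0,\infty)^K\to[0,\infty]$ by $G_0(\mathbf{e}_0):=b$ and $G_0(\mathbf{e}):=F(\mathbf{e})$ for $\mathbf{e}\ne\mathbf{e}_0$. Then $G_0$ is Borel (it alters the Borel function $F$ only on the Borel set $\{\mathbf{e}_0\}$), and $G_0\le G$ pointwise (off $\mathbf{e}_0$ we have $G_0=F\le G$ because $G$ dominates $F$, and at $\mathbf{e}_0$ there is equality), so $\E[G_0(\mathbf{E})]\le\E[G(\mathbf{E})]\le1$ for every $\mathbf{E}\in\EEE^K_Q$. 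Crucially, $G_0$ is still upper semicontinuous: on the complement of $\mathbf{e}_0$ it coincides locally with the u.s.c.\ function $F$, while at $\mathbf{e}_0$ one has $\limsup_{\mathbf{e}\to\mathbf{e}_0,\,\mathbf{e}\ne\mathbf{e}_0}F(\mathbf{e})\le F(\mathbf{e}_0)=a<b=G_0(\mathbf{e}_0)$; equivalently, each super-level set $\{G_0\ge t\}$ equals either $\{F\ge t\}$ or $\{F\ge t\}\cup\{\mathbf{e}_0\}$, both of which are closed.

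Now Proposition~\ref{prop:inc} applies to $G_0$ and yields that its increasing version $\widetilde{G_0}$, defined by \eqref{eq:inc}, is an e-merging function. Since $F$ is increasing, $\widetilde{G_0}(\mathbf{e})=\sup_{\mathbf{e}'\le\mathbf{e}}G_0(\mathbf{e}')\ge\sup_{\mathbf{e}'\le\mathbf{e}}F(\mathbf{e}')=F(\mathbf{e})$ for all $\mathbf{e}$, and $\widetilde{G_0}(\mathbf{e}_0)\ge G_0(\mathbf{e}_0)=b>a=F(\mathbf{e}_0)$; hence $\widetilde{G_0}$ strictly dominates $F$, the desired contradiction. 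The one point that needs care is the verification that $G_0$ remains upper semicontinuous, i.e.\ the observation that bumping a u.s.c.\ increasing function up at a single point preserves upper semicontinuity (while of course it destroys monotonicity, which is precisely why Proposition~\ref{prop:inc} is then needed to repair it); the remaining items — Borel measurability, the expectation bound, and the domination comparison — are routine.
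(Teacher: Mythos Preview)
Your proof is correct and follows essentially the same route as the paper's: modify $F$ at the single point $\mathbf{e}_0$ to obtain an upper semicontinuous function $G_0$ (the paper calls it $H$) with the required expectation bound, then apply Proposition~\ref{prop:inc} to produce an e-merging function $\widetilde{G_0}$ that strictly dominates $F$. You supply more detail than the paper on why $G_0$ remains upper semicontinuous (via super-level sets), which the paper leaves as ``by construction''.
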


\begin{proof}
  Suppose that an admissible e-merging function $F$ is strictly dominated
  by a Borel function $G$ satisfying $\E[G(\mathbf{E})]\le 1$ for all $\mathbf{E}\in\EEE^K_Q$.
  Take a point $\mathbf{e} \in [0,\infty)^K$ such that $G(\mathbf{e})>F(\mathbf{e})$.
  Define a function $H$ by $H(\mathbf{e}) := G(\mathbf{e})$ and $H:=F$ elsewhere.
  By Corollary \ref{cor:usc}, we know that $F$ is upper semicontinuous, and so is $H$ by construction.
  Clearly, $\E[H(\mathbf{E})]\le \E[G(\mathbf{E})] \le 1$ for all $\mathbf{E}\in\EEE^K_Q$.
  Using Proposition \ref{prop:inc}, we obtain that $\widetilde H$ is an e-merging function.
  It remains to notice that $\widetilde H$ strictly dominates $F$.
\end{proof}

\begin{proposition}\label{prop:dominated}
  Any e-merging function is dominated by an admissible e-merging function.
\end{proposition}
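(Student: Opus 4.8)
The plan is to obtain the admissible dominator via Zorn's lemma. Fix an e-merging function $F_0$ and let $\mathcal{P}$ be the set of all e-merging functions dominating $F_0$, partially ordered by domination (so $G\le G'$ in this order means $G'\ge G$ pointwise). Then $\mathcal{P}$ is nonempty ($F_0\in\mathcal{P}$), and a maximal element $G$ of $\mathcal{P}$ is automatically admissible: any e-merging function strictly dominating $G$ would also dominate $F_0$, hence lie in $\mathcal{P}$, contradicting maximality. So the whole task reduces to verifying the hypothesis of Zorn's lemma: every chain $\mathcal{C}\subseteq\mathcal{P}$ has an upper bound in $\mathcal{P}$.

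To bound a chain $\mathcal{C}$, first I would reduce to a countable subfamily. Fix a countable dense set $D\subseteq[0,\infty)^K$ (rational points, say) and for each $\mathbf{q}\in D$ pick a sequence in $\mathcal{C}$ whose values at $\mathbf{q}$ converge to $\sup_{H\in\mathcal{C}}H(\mathbf{q})$; enumerate the union of these (countably many) sequences as $K_1,K_2,\dots$ and set $G_m:=\max(K_1,\dots,K_m)$, which lies in $\mathcal{C}$ because $K_1,\dots,K_m$ are totally ordered. Then $(G_m)$ is an increasing sequence in $\mathcal{C}$, and $\bar G:=\sup_m G_m$ is increasing, Borel, and satisfies $\E^Q[\bar G(\mathbf{E})]=\lim_m\E^Q[G_m(\mathbf{E})]\le 1$ for all $\mathbf{E}\in\EEE_Q^K$ by monotone convergence; a point-mass e-variable argument (as in the last part of the proof of Proposition~\ref{prop:infty}) shows $\bar G$ is finite on $[0,\infty)^K$, so $\bar G$ is an e-merging function dominating $F_0$. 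By construction $\bar G\ge H$ on $D$ for every $H\in\mathcal{C}$.

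The hard part is that $\bar G$ need not dominate the elements of $\mathcal{C}$ at points outside $D$ — a jump of some $H\in\mathcal{C}$ at such a point can be missed. I would repair this by passing to the upper semicontinuous version $\bar G^*:=\lim_{\epsilon\downarrow 0}\bar G(\cdot+\epsilon\mathbf 1)$, which is again an e-merging function by Proposition~\ref{prop:usc} and still dominates $F_0$. For any $H\in\mathcal{C}$ and $\mathbf{e}\in[0,\infty)^K$: for each $\epsilon>0$ choose $\mathbf{q}\in D$ with $\mathbf{e}\le\mathbf{q}\le\mathbf{e}+\epsilon\mathbf 1$, so that $\bar G(\mathbf{e}+\epsilon\mathbf 1)\ge\bar G(\mathbf{q})\ge H(\mathbf{q})\ge H(\mathbf{e})$ (using monotonicity of $\bar G$ and of $H$); letting $\epsilon\downarrow 0$ gives $\bar G^*(\mathbf{e})\ge H(\mathbf{e})$. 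Hence $\bar G^*$ is an upper bound for $\mathcal{C}$ in $\mathcal{P}$. Zorn's lemma then produces a maximal element of $\mathcal{P}$, which is the desired admissible e-merging function dominating $F_0$. The only real subtlety in the whole argument is this tension between measurability — which forces the countable reduction plus monotone convergence — and the pointwise domination requirement, which forces the upper semicontinuous regularization so that the constructed bound recovers each chain element's value at every point, not merely on a dense set.
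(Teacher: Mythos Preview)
Your proof is correct, but it takes a genuinely different route from the paper's. You invoke Zorn's lemma and put all the work into bounding chains: reduce to a countable cofinal family via a dense set, take a monotone limit $\bar G$, then repair pointwise domination by passing to the upper semicontinuous version $\bar G^*$ (Proposition~\ref{prop:usc}). The paper instead avoids Zorn entirely by fixing a reference probability measure $R$ with strictly positive density and mean~$1$ on $[0,\infty)$, and using the functional $G\mapsto\int G\,\dd R^K$ to drive an iterative greedy construction: at stage $i$ pick $F_i\ge F_{i-1}$ with $\int F_i\,\dd R^K$ within $2^{-i}$ of the supremum over all e-merging dominators of $F_{i-1}$. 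The limit $G:=\lim_i F_i$ is e-merging by monotone convergence, and its upper semicontinuous version $G^*$ is shown admissible directly: a strict dominator $H$ would exceed $G^*$ on a hypercube of positive $R^K$-measure, forcing $\int H\,\dd R^K>\int G\,\dd R^K$, contradicting the near-optimality built into the sequence.

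Both arguments lean on Proposition~\ref{prop:usc} at the crucial moment. The paper's approach is more constructive (it needs only dependent choice rather than full Zorn) and yields admissibility via a concrete numerical invariant; your approach is cleaner in that it separates the abstract existence question from any particular functional, at the cost of the stronger axiom. Your handling of the measurability-versus-pointwise-domination tension is exactly right and mirrors the role that upper semicontinuity plays in the paper's argument as well.
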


\begin{proof}
  Let $R$ be any probability measure with positive density on $[0,\infty)$ with mean $1$.
  Fix an e-merging function $F$.
  By definition, $\int F\d R^K \le 1$,
  and such an inequality holds for any e-merging function.
  Set $F_0:=F$ and let
  \begin{equation}\label{eq:c}
    c_i
    :=
    \sup_{G:G\ge F_{i-1}}
    \int  G \d R^K \le 1,
  \end{equation}
  where $i:=1$ and $G$ ranges over all  e-merging functions dominating $F_{i-1}$.
  Let $F_i$ be an e-merging function satisfying
  \begin{equation}\label{eq:f}
    F_i\ge F_{i-1}
    \quad\text{ and }\quad
    \int F_i\d R^K
    \ge c_{i}-2^{-i},
  \end{equation}
  where $i:=1$.
  Continue setting \eqref{eq:c} and choosing $F_i$ to satisfy \eqref{eq:f}
  for $i=2,3,\dots$.
  Set $G:=\lim_{i\to\infty}F_i$.
  It is clear that $G$ is an e-merging (by the monotone convergence theorem) function dominating $F$
  and that $\int G\d R = \int H\d R$
  for any  e-merging function $H$ dominating $G$.

  By Proposition \ref{prop:usc}, the upper semicontinuous version $G^*$ of $G$ is also an e-merging function.  
  Let us check that $G^*$ is admissible.
  Suppose that there exists an e-merging function $H$ such that $H\ge G^*$ and $H\ne G^*$.
  Fix such an $H$ and an $\mathbf{e}\in [0,\infty)^K$ satisfying $H(\mathbf{e})>G^*(\mathbf{e})$. 
  Since $G^*$ is upper semicontinuous and $H$ is increasing,
  there exists $\epsilon>0$ such that $H>G^*$ on the hypercube
  $[\mathbf{e},\mathbf{e}+\epsilon\mathbf{1}]\subseteq[0,\infty)^K$,
  which has a positive $R^K$-measure.
  This gives
  \[
    \int G\d R^K\le \int G^*\d R^K<\int H\d R^K,
  \]
  a contradiction.
\end{proof}

The key component of the statement of completeness of \eqref{eq:convex} is the following proposition.

\begin{proposition}\label{prop:M-0}
  Suppose that $F$ is a symmetric e-merging function satisfying $F(\mathbf{0})=0$.
  Then $F$ is admissible if and only if it is the arithmetic mean.
\end{proposition}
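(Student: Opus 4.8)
The plan is as follows. The direction ``$F=M_K\Rightarrow F$ admissible'' will be immediate from Proposition~\ref{prop:IPK}: by linearity $\E[M_K(\mathbf{E})]=\frac1K\sum_k\E[E_k]=1$ for every $\mathbf{E}\in\EEE_Q^K$ with $\E[\mathbf{E}]=\mathbf{1}$, so $M_K$ is an admissible e-merging function (and trivially $M_K(\mathbf{0})=0$). For the converse my strategy is to show that \emph{every} symmetric e-merging function $F$ is dominated by a mixture $\lambda+(1-\lambda)M_K$, so that admissibility forces equality and then $F(\mathbf{0})=0$ forces $\lambda=0$.

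The crux is the following inequality, valid for any symmetric e-merging function $F$ and all $\mathbf{e},\mathbf{e}'\in[0,\infty)^K$ with $M_K(\mathbf{e})<1<M_K(\mathbf{e}')$:
\[
  \frac{F(\mathbf{e})-M_K(\mathbf{e})}{1-M_K(\mathbf{e})}
  \le
  \frac{M_K(\mathbf{e}')-F(\mathbf{e}')}{M_K(\mathbf{e}')-1}.
\]
To prove it I would set $m:=M_K(\mathbf{e})$, $m':=M_K(\mathbf{e}')$, $q:=(m'-1)/(m'-m)\in(0,1)$, and (on a sufficiently rich probability space, using Lemma~\ref{lem:rich}) let $\mathbf{E}$ take the value of $\mathbf{e}$ with its coordinates permuted by an independent uniform random permutation with probability $q$, and that of $\mathbf{e}'$ permuted likewise with probability $1-q$. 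Each $E_k$ then has mean $qm+(1-q)m'=1$, so $\mathbf{E}\in\EEE_Q^K$; symmetry of $F$ gives $\E[F(\mathbf{E})]=qF(\mathbf{e})+(1-q)F(\mathbf{e}')$, and the e-merging inequality $\E[F(\mathbf{E})]\le1=qm+(1-q)m'$ rearranges, via $q/(1-q)=(m'-1)/(1-m)$, into the claim.

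To finish, I would invoke Proposition~\ref{prop:M} (so $M_K$ essentially dominates $F$, whence $F\le\max(1,M_K)$ everywhere; in particular $F\le1$ on $\{M_K\le1\}$ and $F\le M_K$ on $\{M_K>1\}$) and set
\[
  \lambda:=\max\Big(0,\ \sup\big\{\tfrac{F(\mathbf{e})-M_K(\mathbf{e})}{1-M_K(\mathbf{e})}:M_K(\mathbf{e})<1\big\}\Big)\in[0,1],
\]
the bound $\lambda\le1$ coming from $F\le1$ on $\{M_K<1\}$. One then verifies $\lambda+(1-\lambda)M_K\ge F$ pointwise by cases: on $\{M_K<1\}$ this rearranges to the defining inequality of $\lambda$; on $\{M_K=1\}$ it is $1\ge F$; and on $\{M_K>1\}$ it reads $\lambda\le(M_K-F)/(M_K-1)$, which holds since that ratio is nonnegative and, by the crux inequality, bounds every term of the supremum defining $\lambda$. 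Since $\lambda+(1-\lambda)M_K$ is a convex mixture of the e-merging functions $1$ and $M_K$, it is an e-merging function dominating $F$; admissibility of $F$ forces $F=\lambda+(1-\lambda)M_K$, and evaluating at $\mathbf{0}$ gives $\lambda=F(\mathbf{0})=0$, i.e.\ $F=M_K$. The main obstacle is the crux inequality; everything else is bookkeeping, and this argument incidentally reproves, for symmetric $F$, the completeness half of Theorem~\ref{thm:iff}.
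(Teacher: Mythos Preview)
Your proof is correct, and it takes a genuinely different route from the paper's. The paper argues the ``only if'' direction by contradiction: assuming $F(\mathbf{e})>M_K(\mathbf{e})$ at some point with $M_K(\mathbf{e})<1$, it modifies $F$ only at $\mathbf{0}$ (setting $G(\mathbf{0}):=\delta>0$, $G:=F$ elsewhere) and then, via a somewhat intricate construction with auxiliary independent events $B,C$ and random permutations, shows $\E[G(\mathbf{E})]\le1$ for all $\mathbf{E}\in\EEE_Q^K$; this invokes Proposition~\ref{prop:gap}, which in turn rests on the upper-semicontinuity and measurable-selection machinery of Propositions~\ref{prop:usc}--\ref{prop:inc}. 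Your approach instead proves a clean two-point comparison (the ``crux inequality'') by a single mixture of two randomly permuted configurations, and from it reads off an explicit dominating mixture $\lambda+(1-\lambda)M_K$. This is more elementary---it uses only Propositions~\ref{prop:M} and~\ref{prop:IPK} and bypasses Propositions~\ref{prop:usc}--\ref{prop:gap} entirely---and, as you note, it yields the domination half of Theorem~\ref{thm:iff} directly, whereas the paper deduces Theorem~\ref{thm:iff} from Proposition~\ref{prop:M-0} (together with Proposition~\ref{prop:dominated}). The paper's approach, on the other hand, isolates precisely why $F(\mathbf{0})=0$ is the obstruction: any slack below $M_K$ on $\{M_K<1\}$ can be ``spent'' to raise $F(\mathbf{0})$.
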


\begin{proof}
  For the ``if'' statement, see Proposition \ref{prop:IPK}.
  Next we show the ``only if'' statement.
  Let $F$ be an admissible symmetric e-merging function with $F(\mathbf{0})=0$.
  As always, all expectations $\E$ below are with respect to $Q$.

  Suppose for the purpose of contradiction that there exists
  % a vector
  $(e_1,\dots,e_K)\in [0,\infty)^K$ such that
  \begin{equation*}
    F(e_1,\dots,e_K)
    >
    \frac{e_1+\dots+e_K}{K}
    \in
    [0,1)
  \end{equation*}
  (the case ``${}\in[1,\infty)$'' is excluded by Proposition~\ref{prop:M}).
  We use the same notation as in the proof of Proposition~\ref{prop:M}.
  Since $F(\mathbf{0})=0$, we know that $b>a>0$.
  Let $\delta := (b-a)/(1-a)>0$,
  and define $G:[0,\infty)^K\to[0,\infty]$ by $G(\mathbf{0}):=\delta$ and $G:=F$ otherwise.
  It suffices to show that $\E[G(\mathbf{E})]\le 1$ for all $\mathbf{E}\in\EEE^K_Q$;
  by Proposition~\ref{prop:gap} this will contradict the admissibility of $F$.
    
  Since $F$ is an e-merging function,
  for any random vector $(E_1,\dots,E_K)$ taking values in $[0,\infty)^K$
  and any non-null event $B$ independent of $(E_1,\dots,E_K)$ and $(D_1,\dots,D_K)$
  we have the implication:
  if
  \[
    (E_1,\dots,E_K)\id_B + (D_1,\dots,D_K)\id_{B^\complement} \in \EEE_Q^K,
  \]
  then 
  \[
    \E[F((E_1,\dots,E_K)\id_B + (D_1,\dots,D_K)\id_{B^\complement})]
    \le
    1.
  \]
  Write $\beta:=Q(B)$.
  The above statement shows that if
  \[
    \beta \bigvee_{k=1}^K\E[E_k] + (1-\beta) a \le 1,
  \]
  or equivalently,
  \begin{equation}\label{eq:antecedent}
    \bigvee_{k=1}^K\E[E_k]
    \le
    \frac{1-(1-\beta)a}{\beta},
  \end{equation}
  then
  \[
    \beta\E[F(E_1,\dots,E_K)]+ (1-\beta) b \le 1,
  \]
  or equivalently,
  \begin{equation}\label{eq:consequent}
    \E[F(E_1,\dots,E_K)]
    \le
    \frac{1-(1-\beta)b }{\beta}. 
  \end{equation}
  
  Next, take an arbitrary random vector $(E_1,\dots,E_K)$ such that 
  \begin{equation}\label{eq:conj5} 
    Q((E_1,\dots,E_K)\in  [0,\infty)^K\setminus \{\mathbf{0}\}) = 1.
  \end{equation}
  Further, take an arbitrary non-null event $C$ independent of $(E_1,\dots,E_K)$ such that 
  \begin{equation}\label{eq:conj6}
    \bigvee_{k=1}^K\E[E_k]
    \le
    \frac{1}{Q(C)},
  \end{equation}
  which implies $(E_1,\dots,E_K)\id_C \in \EEE_Q^K$.
  We will show that
  \[
    \E[G((E_1,\dots,E_K)\id_C)]\le 1.
  \]
  Write $\lambda := Q(C)$ and choose $\beta\in(0,1]$ such that $\beta/(1-(1-\beta)a)=\lambda$.
  From $\bigvee_{k=1}^K\E[E_k]\le 1/\lambda$ we obtain \eqref{eq:antecedent},
  which implies \eqref{eq:consequent}.
  Using \eqref{eq:consequent}, we have
  \begin{align*}
    \E&[G((E_1,\dots,E_K)\id_C)]\\
    &=
    \lambda \E[F(E_1,\dots,E_K)] + (1-\lambda)\delta \\
    &=
    \lambda \E[F(E_1,\dots,E_K)] + (1-\lambda)\frac{b-a}{1-a} \\
    &\le
    \frac{\beta}{1-(1-\beta)a} \frac{1-(1-\beta)b}{\beta} + \left(1-\frac{\beta}{1-(1-\beta)a}\right) \frac{b-a}{1-a}
    % An equality is easier for the reader to check without our help.
    % &=
    % \frac{1-(1-\beta)b }{1-(1-\beta)a} + \frac{(1-\beta)(1-a)}{1-(1-\beta)a} \times \frac{b-a}{1-a}
    =
    1.
  \end{align*}

  Finally, we note that for any $\mathbf{E}\in\EEE_Q^K$,
  if $Q(\mathbf{E} = \mathbf{0})=0$, then $\E[G(\mathbf{E})] = \E[F(\mathbf{E})] \le 1$.
  If $Q(\mathbf{E} = \mathbf{0})>0$,
  then $\mathbf{E}$ is distributed as $(E_1,\dots,E_K)\id_C$ for some event $C$
  and $(E_1,\dots,E_K)$ satisfying \eqref{eq:conj5}--\eqref{eq:conj6}.
  In either case, we have $\E[G(\mathbf{E})]\le1$.
\end{proof}

Finally, we are able to prove Theorem \ref{thm:iff} based on  Proposition~\ref{prop:M-0}.

\begin{proof}[Proof of Theorem \ref{thm:iff}]
  In view of Proposition~\ref{prop:dominated},
  it suffices to check the characterization of admissibility.
  The ``if'' statement follows from Proposition \ref{prop:IPK}.
  We next show the ``only if'' statement;
  let $F$ be admissible.
  If $F(\mathbf{0})\ge 1$, then $F\ge1$.
  The fact that $F$ is an e-merging function further forces $F=1$.
  Next, assume $F(\mathbf{0})\in[0,1)$ and let $\lambda:=F(\mathbf{0})$.
  Define another function $G:[0,\infty)^K\to[0,\infty)$ by
  \[
    G(\mathbf{e})
    :=
    \frac{F(\mathbf{e})-\lambda}{1-\lambda}.
  \]
  It is easy to see that $G$ is a  symmetric and admissible e-merging function satisfying $G(\mathbf{0})=0$.
  Therefore, using Proposition~\ref{prop:M-0}, we have $G=M_K$.
  The statement of the theorem follows.
\end{proof}

\section{A maximin view of merging}
\label{app:minimax}

This appendix is inspired by the comments by the Associate Editor
of the journal version of this paper.

\subsection{Informal maximin view}

This section is high-level and informal;
in it (and in this appendix in general) we will only discuss the case of e-merging.

In this paper, two particularly important sources of e-merging functions are:
\begin{itemize}
\item
  $\mathcal{F}_0$, the class of all increasing Borel functions $F:[0,\infty)^K \to [0,\infty)$;
\item
  $\mathcal{F}_S$, the class of all symmetric functions in $\mathcal{F}_0$.
\end{itemize}
In the rest of this appendix, $\FFF$ will stand for either $\FFF_0$ or $\FFF_S$.

Let $(\Omega,\AAA,Q)$ be an atomless probability space (cf.\ Appendix~\ref{app:atomless}).
For $\mathbf{E}\in\mathcal{E}^K_Q$,
let $\FFF_{\mathbf{E}}$ be the set of all functions $F\in\FFF$ such that
\[
  \E[F(\mathbf{E})] \le 1;
\]
intuitively, these are $\mathbf E$-specific e-merging functions.

We are looking for suitable e-merging functions to use,
which can be interpreted as the problem of finding the ``best elements'',
in some sense, of
$
  \cap (\FFF_{\mathbf{E}}: \mathbf{E}\in\mathcal{E}^K_Q)
$.
More generally,
we could specify a class $\mathcal M$ of joint models of $K$ e-variables, 
and be interested in
\begin{equation}\label{eq:maximin-crude}
  \max
  \left(
    \bigcap_{\mathbf{E}\in\mathcal{M}}
    \mathcal{F}_{\mathbf{E}}
  \right),
\end{equation}
where $\max(\cdot)$ gives the best element(s) of a set, in some sense.
In the case of admissibility, it will be literally the set of maximal elements,
but it can also be the element essentially or weakly dominating all other elements
if it exists.

The problem \eqref{eq:maximin-crude} can be said to be a maximin problem,
since the natural interpretation of $\cap$ (preceded by $\max$) is minimum.
% Clearly, the larger $\mathcal{M}$, the fewer
%% (in an informal sense)
% the best elements (if they exist at all).
For the e-merging and ie-merging functions, the informal problems are
\[
  \max
  \left(
    \bigcap_{\mathbf{E}\in\mathcal{E}^K_Q}
    \mathcal{F}_{\mathbf{E}}
  \right)
  \qquad\text{and}\qquad
  \max
  \left(
    \bigcap_{\mathbf{E}\in i\mathcal{E}^K_Q}
    \mathcal{F}_{\mathbf{E}}
  \right).
\]
In the rest of this appendix,
$\MMM$ will stand for either $\EEE^K_Q$ or $i\EEE^K_Q$.

% I have two small points
% \begin{enumerate}
% \item
%   I guess AE's ``improvement'' means that the second intersection is larger
%   so its maximal elements are presumably larger than or equal to those of the first intersection. 
% \item
%   The ``nuisance parameters'' of AE seems to refer to an unobservable object that describes $\mathcal{M}$.
%   For instance, it may be
%   \[
%     \mathcal{M} = \{M_\theta:\theta \in \Theta \}\subset \mathcal E_Q^K,
%   \]
%   where each $M_\theta$ is a particular joint model (a random vector) of e-variables.
% \end{enumerate}

\subsection{Formal maximin}
% \subsection{Duality}
% \subsubsection*{A general maximin formulation}
\label{subsec:maximin}

Our results about essential and weak domination,
namely Propositions~\ref{prop:M} and~\ref{prop:M-i},
have interesting connections with the maximin problem
\begin{equation}\label{eq:maximin}
  \sup_{F\in\FFF}
  \min_{\mathbf{E}\in\MMM}
  F(\mathbf{e})
  \id_{\{\E[F(\mathbf{E})]\le 1\}}
\end{equation}
for a fixed $\mathbf{e}\in [0,\infty)^K$.
The value~\eqref{eq:maximin} is the supremum of $F(\mathbf{e})$
over all e-merging functions (if $\MMM=\EEE^K_Q$)
or over all ie-merging functions (if $\MMM=i\EEE^K_Q$).
Intuitively, this corresponds to an overoptimistic way of merging e-values
choosing the best merging function in hindsight
(which makes \eqref{eq:maximin} somewhat similar to the VS bound).
Notice that the minimum in \eqref{eq:maximin}
(either $F(\mathbf{e})$ or 0)
is indeed attained.

Fix $\mathbf{e}\in [0,\infty)^K$.
Propositions~\ref{prop:M} and~\ref{prop:M-i} show that
\begin{align*}
  \max_{F\in\mathcal{F}_S}
  \min_{\mathbf{E}\in\mathcal{E}^K_Q}
  F(\mathbf{e})
  \id_{\{\E[F(\mathbf E)]\le 1\}}
  &=
  M_K(\mathbf{e}) \vee 1,\\
  % \label{eq:e-S}\\
  \max_{F\in\mathcal{F}_S}
  \min_{\mathbf{E}\in i\mathcal {E}^K_Q}
  F(\mathbf e)
  \id_{\{\E[F(\mathbf{E})]\le 1\}}
  &=
  P_K(\mathbf{e}\vee\mathbf{1})\\
  &=
  \max_{F\in\mathcal{F}_0}
  \min_{\mathbf{E}\in i\mathcal {E}^K_Q}
  F(\mathbf e)
  \id_{\{\E[F(\mathbf{E})]\le 1\}},
  % &=
  % P_K(\mathbf{e}\vee\mathbf{1}),
  % \label{eq:ie-S}
\end{align*}
where $M_K$ is the arithmetic mean and $P_K$ is the product function.
% Equations~\eqref{eq:e-S} and~\eqref{eq:ie-S} follow
This follows from the maximin problem having a universal optimizer for $\mathbf{e}$ large enough:
$M_K(\mathbf{e})\ge 1$ in the e-merging case and $\mathbf{e}\ge\mathbf{1}$ in the ie-merging case.

Let us check that
\[
  \max_{F\in\FFF_0}
  \min_{\mathbf{E}\in\EEE^K_Q}
  F(\mathbf e)
  \id_{\{\E[F(\mathbf{E})]\le 1\}}
  =
  \max(\mathbf{e})\vee 1.
  % \label{eq:ie-0}
\]
To show this,
% \eqref{eq:ie-0},
first notice that,
for each $k=1,\dots,K$,
$(e_1,\dots,e_K)\mapsto e_k$ is an e-merging function,
and so is $(e_1,\dots,e_K)\mapsto 1$.
This shows the $\ge$ part of the equality. 
On the other hand, for any function $F\in\FFF$,
if $F(\mathbf{e})>a:=\max(\mathbf e)\vee 1$ for some $\mathbf{e}$,
then by designing a vector $\mathbf{E}$ of e-variables with $Q(\mathbf{E}=\mathbf{e})=1/a$,
we have $\E[F(\mathbf{E})] \ge F(\mathbf{e})/a > 1$,
and hence $F$ is not an e-merging function.
This gives the $\le$ part of the equality.

\subsection{The minimax formulation}
% \label{subsec:minimax}

For a fixed $\mathbf{e}\in [0,\infty)^K$, we can also talk about the minimax problem
corresponding to the maximin problem~\eqref{eq:maximin}:
\begin{equation}\label{eq:minimax}
  \inf_{\mathbf{E}\in\MMM}
  \max_{F\in\FFF}
  F(\mathbf{e})
  \id_{\{\E[F(\mathbf{E})]\le 1\}}.
\end{equation}
As usual, we have
\begin{equation}\label{eq:minimax2}
  \inf_{\mathbf{E}\in\MMM}
  \max_{F\in\FFF}
  F(\mathbf{e})
  \id_{\{\E[F(\mathbf{E})]\le 1\}}
  \ge
  \sup_{F\in\FFF}
  \min_{\mathbf{E}\in\MMM}
  F(\mathbf{e})
  \id_{\{\E[F(\mathbf{E})]\le 1\}},
\end{equation}
but the two sides are not always equal.
% As Ruodu shows in his note

The minimax problem \eqref{eq:minimax} is usually easy to solve.
We first look at the case $\FFF=\FFF_0$.
Note that for fixed $\mathbf{E}\in\EEE^K_Q$, using the Neyman--Pearson argument, we have
\[
  \max_{F\in\FFF_0}
  F(\mathbf{e})
  \id_{\{\E[F(\mathbf{E})]\le 1\}}
  =
  \frac{1}{Q(\mathbf{E}\ge\mathbf{e})}
  \in
  [1,\infty],
\]
for which a maximizer (typically the unique maximizer) is
\[
  \mathbf{e}'
  \mapsto
  \frac
  {\id_{\{\mathbf{e}'\ge\mathbf{e}\}}}
  {Q(\mathbf{E}\ge\mathbf{e})}.
\]
Therefore, the minimax problem \eqref{eq:minimax} for $\FFF=\FFF_0$ has value
\[
  \inf_{\mathbf{E}\in\MMM}
  \max_{F\in\FFF_0}
  F(\mathbf{e})
  \id_{\{\E[F(\mathbf{E})]\le 1\}}
  =
  \inf_{\mathbf{E}\in\MMM}
  \frac{1}{Q(\mathbf{E}\ge\mathbf{e})}.
\]
Since $\MMM$ is $\mathcal E^K_Q$ or $i\mathcal E^K_Q$,
we can compute this as
\begin{align}
  \min_{\mathbf{E}\in\EEE^K_Q}
  \frac{1}{Q(\mathbf{E}\ge\mathbf{e})}
  &=
  \min_{E\in\EEE_Q}
  \frac{1}{Q(E\ge\max(\mathbf{e}))}
  =
  \max(\mathbf{e})\vee 1,
  \notag\\
  \min_{\mathbf{E}\in i\EEE^K_Q}
  \frac{1}{Q(\mathbf{E}\ge\mathbf{e})}
  &=
  \prod_{k=1}^K
  \min_{E_k\in\EEE_Q}
  \frac{1}{Q(E_k\ge e_k)}
  =
  P_K(\mathbf{e}\vee\mathbf{1}).
\label{eq:minimax3}
\end{align}
In combination with results of Section~\ref{subsec:maximin},
this shows that \eqref{eq:minimax2} holds as an equality in this case.

Next, let us look at the case $\FFF=\FFF_S$.
For $\MMM=i\EEE^K_Q$,
the coincidence of the minimax and maximin follows from the previous results,
so we assume $\MMM=\EEE^K_Q$.
% For $\mathbf e=(e_1,\dots,e_K)\in [0,\infty)^K$, 
Let 
\[
  A_{\mathbf{e}}
  :=
  \bigcup_{\pi\in\Pi_K}
  \{\mathbf{e}'\in[0,\infty)^K: \mathbf{e}'\ge\mathbf{e}_{\pi}\},
\]
where $\Pi_K$ is the set of all $K$-permutations and $\mathbf{e}_{\pi}:=(e_{\pi(1)},\dots,e_{\pi(K)})$.
Using a Neyman--Pearson argument again, we have, for a fixed $\mathbf{E}\in\EEE^K_Q$,
\[
  \max_{F\in\FFF_S}
  F(\mathbf{e})
  \id_{\{\E[F(\mathbf{E})]\le 1\}}
  =
  \frac
  {1}
  {Q(\mathbf{E}\in A_{\mathbf{e}})},\]
for which a maximizer is
\[
  \mathbf{e}'
  \mapsto
  \frac
  {\id_{\{\mathbf{e}'\in A_{\mathbf{e}}\}}}
  {Q(\mathbf{E}\in A_{\mathbf{e}})}.
\] 
Let $a:=(1/M_K(\mathbf e))\wedge 1$, and the distribution of $\mathbf{E}'$ be given by
\[
  \frac{a}{K!}
  \sum_{\pi\in\Pi_K}
  \delta_{\mathbf e_\pi}
  +
  (1-a)\delta_{\mathbf{0}},
\]
$\delta_{\omega}$ being the point-mass at $\omega$.
It is clear that $\mathbf{E}'\in\EEE^K_Q$ and $Q(\mathbf{E}'\in A_{\mathbf{e}})= a$.
It follows that
\begin{equation}\label{eq:minimax4}
  \inf_{\mathbf{E}\in\EEE^K_Q}
  \frac{1}{Q(\mathbf{E}\in A_{\mathbf{e}})}
  \le
  \frac{1}{Q(\mathbf{E}'\in A_{\mathbf{e}})}
  =
  M_K(\mathbf{e}) \vee 1.
\end{equation}
Hence, by~\eqref{eq:minimax2} and the results of Section~\ref{subsec:maximin},
the values of the maximin and the minimax again coincide
(and the inf in \eqref{eq:minimax4} is actually attained
and so can be replaced by min).

The inequalities \eqref{eq:minimax3} and \eqref{eq:minimax4}
give alternative proofs to the domination statements in Propositions~\ref{prop:M}
and (in the independent case) \ref{prop:M-i},
since our overoptimistic upper bound coincides with a valid e-merging (ie-merging in the independent case) function
when $M_K(\mathbf{e})\ge 1$ (when $\mathbf{e} \ge \mathbf{1}$ in the independent case).
For this, we do not need anything derived in the main paper or in Section~\ref{subsec:maximin} above.
% or in Section \ref{sec:minimaxeq} below.
However, the minimax approach (at least in the form presented here)
does not produce the full domination structure of symmetric e-merging functions
as given in Theorem~\ref{thm:iff}.
% It seems, however, that the minimax approach is not illuminating for the result in Theorem 3.2.

\section{Cross-merging between e-values and p-values}
\label{app:cross-merging}

In this appendix we will briefly discuss functions performing ``cross-merging'':
either merging several e-values into a p-value or several p-values into an e-value.
Formally, an \emph{e-to-p merging function} is a decreasing Borel function $F:[0,\infty]^K\to[0,1]$
such that $F(E_1,\dots,E_K)$ is a p-variable whenever $E_1,\dots,E_K$ are e-variables,
and a \emph{p-to-e merging function} is a decreasing Borel function $F:[0,1]^K\to[0,\infty]$
such that $F(P_1,\dots,P_K)$ is an e-variable whenever $P_1,\dots,P_K$ are p-variables.
The message of this appendix is that cross-merging can be performed
as composition of pure merging (applying an e-merging function or a p-merging function)
and calibration (either e-to-p calibration or p-to-e calibration);
however, in some important cases (we feel in the vast majority of cases)
pure merging is more efficient, and should be done, in the domain of e-values.

Let us start from e-to-p merging.
Given e-values $e_1,\dots,e_K$,
we can merge them into one e-value by applying the arithmetic mean,
the only essentially admissible symmetric e-merging function (Proposition~\ref{prop:M}),
and then by applying inversion $e\mapsto e^{-1}\wedge1$,
the only admissible e-to-p calibrator (Proposition~\ref{prop:e-to-p}).
This gives us the e-to-p merging function
\begin{equation}\label{eq:e-to-p-merging}
  F(e_1,\dots,e_K)
  :=
  \frac{K}{e_1+\dots+e_K}
  \wedge
  1.
\end{equation}
The following proposition shows that in this way we obtain the optimal symmetric e-to-p merging function.

\begin{proposition}
  The e-to-p merging function \eqref{eq:e-to-p-merging} dominates all symmetric e-to-p merging functions.
\end{proposition}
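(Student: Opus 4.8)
The plan is to argue by contradiction, re-using the randomisation from the proof of Proposition~\ref{prop:M} and exploiting the symmetry of the rival merging function together with the defining inequality of a p-variable. Write $F$ for the function in \eqref{eq:e-to-p-merging} (which, as observed just above, is already known to be a valid e-to-p merging function), and suppose that $G$ is a symmetric e-to-p merging function with $G(\mathbf{e})<F(\mathbf{e})$ for some $\mathbf{e}=(e_1,\dots,e_K)$.

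First I would record the easy consequences of $G(\mathbf{e})<F(\mathbf{e})$. Since $G\ge0$, this forces $F(\mathbf{e})>0$, so $s:=e_1+\dots+e_K<\infty$, and with $\bar e:=s/K$ we have $F(\mathbf{e})=(1/\bar e)\wedge1$. I would then fix any $\epsilon$ with $G(\mathbf{e})\le\epsilon<F(\mathbf{e})$; such an $\epsilon$ lies in $(0,1)$, and the slack of possibly taking $\epsilon>G(\mathbf{e})$ is there only to cover the degenerate case $G(\mathbf{e})=0$, for which we still need a genuine significance level $\epsilon\in(0,1)$.

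Next, working on an atomless probability space $(\Omega,\AAA,Q)$ — legitimate because e-to-p merging functions are defined via a universal quantifier over probability spaces, cf.\ Lemma~\ref{lem:rich} — I would build the counterexample exactly as for Proposition~\ref{prop:M}: let $\pi$ be uniformly distributed on the permutations of $\{1,\dots,K\}$, let $A$ be an event independent of $\pi$ with $Q(A)=(1/\bar e)\wedge1$, and set $(D_1,\dots,D_K):=(e_{\pi(1)},\dots,e_{\pi(K)})\id_A$. A one-line computation gives $\E[D_k]=\bar e\,Q(A)=\bar e\wedge1\le1$, so each $D_k\in\EEE_Q$; and by symmetry of $G$, on the event $A$ we have $G(D_1,\dots,D_K)=G(e_{\pi(1)},\dots,e_{\pi(K)})=G(\mathbf{e})\le\epsilon$. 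Hence
\[
  Q\bigl(G(D_1,\dots,D_K)\le\epsilon\bigr)
  \;\ge\;
  Q(A)
  \;=\;
  (1/\bar e)\wedge1
  \;=\;
  F(\mathbf{e})
  \;>\;
  \epsilon ,
\]
contradicting the fact that $G(D_1,\dots,D_K)$ is a p-variable.

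The step that needs the most care is the choice of the scaling probability $Q(A)=(1/\bar e)\wedge1$: it must simultaneously be small enough to keep all the $D_k$ in $\EEE_Q$ (i.e.\ $Q(A)\le1/\bar e$) and large enough to exceed $\epsilon$ (i.e.\ $Q(A)\ge F(\mathbf{e})$), and the two boundary cases $\bar e\le1$ and $G(\mathbf{e})=0$ have to be checked against both constraints. This is elementary but is the one place where a sloppy proof could fail; everything else is a direct transcription of the Proposition~\ref{prop:M} argument.
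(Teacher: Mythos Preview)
Your proof is correct and follows essentially the same approach as the paper's: both construct the randomly permuted and indicator-scaled vector $(e_{\pi(1)},\dots,e_{\pi(K)})\id_A$ with $Q(A)=F(\mathbf{e})$, verify it lies in $\EEE_Q^K$, and use symmetry of $G$ to force $Q(G\le G(\mathbf{e}))\ge F(\mathbf{e})>G(\mathbf{e})$. Your explicit introduction of an intermediate $\epsilon\in(0,1)$ with $G(\mathbf{e})\le\epsilon<F(\mathbf{e})$ is a small but genuine refinement over the paper, which writes the final inequality directly at level $G(\mathbf{e})$ and so, read literally, does not quite cover the degenerate case $G(\mathbf{e})=0$ (since the p-variable condition is stated only for $\epsilon\in(0,1)$).
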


\begin{proof}
  Suppose that a symmetric e-to-p merging function $G$ satisfies
  $G(\mathbf{e})<F(\mathbf{e})$ for some $\mathbf{e}=(e_1,\dots,e_K)\in[0,\infty)^K$.
  The following arguments are similar to the proof of Proposition~\ref{prop:M}.
  As before, $\Pi_K$ is the set of all permutations on $\{1,\dots,K\}$,
  $\pi$ is randomly and uniformly drawn from $\Pi_K$,
  and $(D_1,\dots,D_K):=(e_{\pi(1)},\dots,e_{\pi(K)})$.
  Further, let $(D'_1,\dots,D'_K) := (D_1,\dots,D_K) 1_A$,
  where $A$ is an event independent of $\pi$ and satisfying $Q(A) = F(\mathbf{e})$.
  For each $k$, we have $\E[D'_k]= F(\mathbf{e}) M_{K}(e_1,\dots,e_K) \le 1$,
  and hence $D'_k\in\EEE_Q$.
  By the symmetry of $G$, we have $Q(G(D'_1,\dots,D'_K) = G(\mathbf{e})) \ge Q(A) = F(\mathbf{e})$,
  and hence
  \[
    Q
    \left(
      G(D'_1,\dots,D'_K)
      \le
      G(\mathbf{e})
    \right)
    \ge
    F(\mathbf{e})
    >
    G(\mathbf{e}).
  \]
  This contradicts $G$ being an e-to-p merging function.
\end{proof}

It is interesting that \eqref{eq:e-to-p-merging}
can also be obtained by composing e-to-p calibration and improper pure p-merging.
Given e-values $e_1,\dots,e_K$ we first transform them into p-values $1/e_1,\dots,1/e_K$
(in this paragraph we allow p-values greater than 1, as in \citet{Vovk/Wang:2019}).
\citet{Wilson:2019} proposed the harmonic mean as a p-merging function.
The composition of these two transformations again gives us the e-to-p merging function \eqref{eq:e-to-p-merging}.
The problem with this argument is that,
as \citet[Wilson's second claim]{Goeman/etal:2019} point out,
% (with a reference to \citet{Vovk/Wang:2019}),
Wilson's method is in general not valid
(one obtains a valid method if the harmonic mean is multiplied by $c\ln K$
for $K>2$ and for some constant $c<\exp(1)$,
according to \citet{Vovk/Wang:2019}).
Despite the illegitimate application of the harmonic mean,
the resulting function \eqref{eq:e-to-p-merging} is still a valid e-to-p merging function.
At least in this context,
we can see that e-to-p merging should be done by first pure merging and then e-to-p calibration,
not vice versa (which would result in an extra coefficient of $c\ln K$).

Now suppose we are given p-values $p_1,\dots,p_K$,
and we would like to merge them into one e-value.
Let $\kappa\in(0,1)$.
Applying the calibrator \eqref{eq:calibrator},
we obtain e-values $\kappa p_1^{\kappa-1},\dots,\kappa p_K^{\kappa-1}$,
and since the average of e-values is an e-value,
\begin{equation}\label{eq:p-to-e-merging}
  F(p_1,\dots,p_K)
  :=
  \frac{\kappa}{K}
  \sum_{k=1}^K p_k^{\kappa-1}
\end{equation}
is a p-to-e merging function.

The following proposition will imply that all p-to-e merging functions \eqref{eq:p-to-e-merging} are admissible;
moreover, it will show, in conjunction with Proposition~\ref{prop:p-to-e},
that for any admissible (p-to-e) calibrator $g$, the function
\begin{equation*} % \label{eq:family}
  M_g(p_1,\ldots,p_K)
  :=
  \frac1K
  \sum_{k=1}^K
  g(p_k)
\end{equation*}
is an admissible p-to-e merging function.  

\begin{proposition}
  If $F:[0,1]^K\to[0,\infty]$ is an upper semicontinuous and decreasing Borel function,
  $\E[F(\mathbf{P})] = 1$ for all $\mathbf{P}\in\PPP_Q^K $ with margins uniform on $[0,1]$,
  and $F=\infty$ on $[0,1]^K\setminus(0,1]^K$,
  then $F$ is an admissible p-to-e merging function.
\end{proposition}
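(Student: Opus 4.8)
The plan is to prove two things in turn, following the pattern of Propositions~\ref{prop:p-to-e} and~\ref{prop:IPK}: that $F$ is a p-to-e merging function, and that it is admissible within that class.

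For the first part I would fix a probability space $(\Omega,\AAA,Q)$ and p-variables $P_1,\dots,P_K$, and, after adjoining an independent uniform auxiliary variable (harmless by the richness discussion of Appendix~\ref{app:atomless}, and not affecting $\E[F(P_1,\dots,P_K)]$), construct for each $k$ a random variable $U_k$, uniform on $[0,1]$, with $U_k\le P_k$ almost surely — the standard one-dimensional coupling (the distributional transform), which needs only $Q(P_k\le x)\le x$. Since $F$ is decreasing, $F(U_1,\dots,U_K)\ge F(P_1,\dots,P_K)$ pointwise; since $(U_1,\dots,U_K)$ has uniform margins the hypothesis gives $\E[F(U_1,\dots,U_K)]=1$, and therefore $\E[F(P_1,\dots,P_K)]\le1$, i.e.\ $F(P_1,\dots,P_K)\in\EEE_Q$.

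For admissibility, suppose for contradiction that some p-to-e merging function $G$ strictly dominates $F$: $G\ge F$ everywhere and $G(\mathbf p)>F(\mathbf p)$ for some $\mathbf p\in[0,1]^K$. Then $F(\mathbf p)<\infty$, so by the hypothesis that $F=\infty$ off $(0,1]^K$ we must have $\mathbf p=(p_1,\dots,p_K)$ with every $p_k>0$; fix $\delta\in(0,\min_kp_k)$ and set $O:=\prod_{k=1}^K(p_k-\delta,p_k]\subseteq(0,1]^K$, a box of positive Lebesgue measure all of whose points are $\le\mathbf p$. We may assume $G(\mathbf p)<\infty$ (otherwise monotonicity of $G$ makes $G\equiv\infty$ on $O$ and the contradiction below is immediate). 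Monotonicity of $G$ gives $G\ge G(\mathbf p)$ on $O$, while, after shrinking $\delta$, upper semicontinuity of $F$ gives $F<c:=\tfrac12(F(\mathbf p)+G(\mathbf p))$ on $O$, so $G-F>\gamma:=\tfrac12(G(\mathbf p)-F(\mathbf p))>0$ throughout $O$. The remaining ingredient is a vector $\mathbf P=(P_1,\dots,P_K)$ of p-variables with uniform margins and $Q(\mathbf P\in O)>0$: take $V$ uniform on $[0,1]$ and, for each $k$, a Borel measure-preserving bijection $\tau_k$ of $[0,1]$ sending $(0,\delta]$ onto $(p_k-\delta,p_k]$ (possible since both intervals have length $\delta$), and set $P_k:=\tau_k(V)$; then $Q(\mathbf P\in O)\ge Q(V\le\delta)=\delta>0$. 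Each $P_k$ is uniform, hence a p-variable, so $\E[G(\mathbf P)]\le1$; but $G\ge F+\gamma\,\id_O$ pointwise and $\E[F(\mathbf P)]=1$ by the hypothesis (as $\mathbf P$ has uniform margins), whence
\[
  \E[G(\mathbf P)]
  \ge
  \E[F(\mathbf P)]+\gamma\,Q(\mathbf P\in O)
  =
  1+\gamma\,Q(\mathbf P\in O)
  >
  1,
\]
a contradiction. Hence no such $G$ exists and $F$ is admissible.

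The step I expect to be delicate is this last one, for two linked reasons. First, $G$ is not assumed upper semicontinuous, so the pointwise inequality $G(\mathbf p)>F(\mathbf p)$ cannot be propagated to a neighbourhood by examining $G$; this is precisely why the hypotheses demand upper semicontinuity of $F$ (to bound $F$ from above near $\mathbf p$) and why one must work with a box $O$ lying \emph{below} $\mathbf p$ (so that monotonicity of $G$ bounds $G$ from below on $O$). Second, unlike in Proposition~\ref{prop:IPK} one cannot place an atom at $\mathbf p$, since the test vectors here are constrained to have uniform — hence atomless — margins; routing positive mass into $O$ through marginals that are each uniform, via a single underlying uniform $V$ and measure-preserving rearrangements, is the one genuinely new device. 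The existence of the $\tau_k$ and the richness/enlargement bookkeeping in the first part are routine.
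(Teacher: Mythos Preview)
Your proof is correct and follows the same overall strategy as the paper's: for the merging property, couple each $P_k$ below a uniform $U_k$ and use that $F$ is decreasing; for admissibility, combine upper semicontinuity of $F$ with monotonicity of $G$ to produce a box $O\subseteq(0,1]^K$ of positive Lebesgue measure on which $G-F$ is bounded below by a positive constant.

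The only substantive difference is the test vector used to reach a contradiction. You build a comonotone $\mathbf P$ by routing a single uniform $V$ through measure-preserving bijections $\tau_k$ so as to put mass $\delta$ on $O$. The paper instead takes $P_1,\dots,P_K$ \emph{independent} and uniform on $[0,1]$: their joint law is then Lebesgue measure on $[0,1]^K$, so $1\ge\E[G(\mathbf P)]\ge\E[F(\mathbf P)]=1$ forces $G=F$ Lebesgue-a.e., which is immediately contradicted by $G>F$ on the positive-measure box $O$. This is shorter, since independent uniforms already form a valid $\mathbf P\in\PPP_Q^K$ with uniform margins that charges every nondegenerate box; your bijection construction is correct but not needed. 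On the other hand, your version makes explicit that the hypothesis ``for all $\mathbf P$ with uniform margins'' is genuinely being used beyond the product case, which is a nice conceptual point even if the paper's choice suffices for the proof.
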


\begin{proof}
  It is obvious
  (cf.\ the proof of Proposition \ref{prop:p-to-e})
  that $F$ is a p-to-e merging function.
  To show that $F$ is admissible,
  consider another p-to-e merging function $G$ such that $G\ge F$.
  For independent $P_1,\dots,P_K$ distributed uniformly on $[0,1]$,
  \[
    1 \ge \E[G(P_1,\dots,P_K)] \ge \E[F(P_1,\dots,P_K)] = 1,
  \]
  forcing $G=F$ almost everywhere on $[0,1]^K$.
  The upper semicontinuity of $F$ and $G$ being decreasing further guarantee that $G=F$ on $(0,1]^K$;
  indeed, if $G(\mathbf{e})>F(\mathbf{e})$ for $\mathbf{e}\in(0,1]^K$,
  there exists $\epsilon>0$ such that $G>H$ on the hypercube
  $[\mathbf{e}-\epsilon\mathbf{1},\mathbf{e}]\subseteq(0,1]^K$,
  which has a positive Lebesgue measure.
  Therefore, $F$ is admissible.
\end{proof}

Let us see how we can obtain \eqref{eq:p-to-e-merging}
reversing the order in which we do calibration and pure merging.
If we first merge the p-values $p_1,\dots,p_K$ by naively (improperly) assuming that their generalized mean
\begin{equation}\label{eq:mean}
  \left(
    \frac1K
    \sum_{k=1}^K p_k^{\kappa-1}
  \right)^{\frac{1}{\kappa-1}}
\end{equation}
is a p-value
and then apply the calibrator \eqref{eq:calibrator},
we will obtain exactly the p-to-e merging function \eqref{eq:p-to-e-merging}.
As shown in \citet[Table 1]{Vovk/Wang:2019},
\eqref{eq:mean} is not a valid p-value in general
(and has to be multiplied by at least $\kappa^{1/(\kappa-1)}$ to get a valid p-value).
This lack of validity, however, does not matter in this context:
the final result \eqref{eq:p-to-e-merging} is still a valid p-to-e merging function.
This shows that, in the  context of p-to-e merging, one should  first perform p-to-e calibration and then pure merging,
not vice versa.

\begin{remark}
  We can generalize \eqref{eq:p-to-e-merging} to
  \[
    F(p_1,\dots,p_K)
    :=
    \sum_{k=1}^K
    \lambda_k
    f_k(p_k),
  \]
  where $\lambda_k\ge0$ sum to 1 and $f_k$ are calibrators.
  It is interesting that any p-to-e merging function is dominated
  by a p-to-e merging function of this form
  \citep[Theorem~4.1]{Vovk/etal:2020}.
  We can see that the classes of e-merging functions,
  e-to-p merging functions, and p-to-e merging functions admit simple explicit representations.
  But the class of p-merging functions is much more complex.
\end{remark}

  \subsection*{Cross-merging under dependency assumptions}

  In the main part of this appendix we have discussed cross-merging for arbitrarily dependent e-values and p-values.
  Cross-merging of independent p-values might be trivial
  (we can take any decreasing Borel $F:[0,1]^K\to[0,\infty]$ satisfying $\int F\d U\le1$,
  $U$ being the uniform probability measure on $[0,1]^K$),
  but cross-merging of independent e-values raises interesting problems.

  Combining the only admissible e-to-p calibrator of Proposition~\ref{prop:e-to-p}
  and the product ie-merging function \eqref{eq:product},
  we obtain the ``ie-to-p'' merging function
  \[
    (e_1,\dots,e_K)
    \mapsto
    \frac{1}{e_1\dots e_K}
    \wedge
    1;
  \]
  this is even an \emph{se-to-p merging function},
  in the sense of mapping any sequential e-values to a p-value.
  However, we can do better:
  by Ville's theorem \citep[p.~100]{Ville:1939},
  \[
    (e_1,\dots,e_K)
    \mapsto
    \min_{k=0,\dots,K}
    \frac{1}{e_1\dots e_k}
    \wedge
    1
  \]
  is also an se-to-p merging function.
  For further information,
  see \citet{Shafer/etal:2011}.

  Strengthening the assumption of $e_1,\dots,e_K$ being sequential to their independence
  opens up new possibilities for their combination;
  cf.\ \citet{Vovk/Wang:arXiv2007}.

\section{Additional experimental results}
\label{app:extra-experiments}

In our experiments in the subsection ``Combining independent e-values and p-values''
in Section~\ref{sec:experiments}
we considered the case where the alternative hypothesis was always true.
In this appendix we will report results of experiments in the situation
where it is true only part of the time.

\begin{figure}
  \begin{center}
    \includegraphics[width=0.6\textwidth]{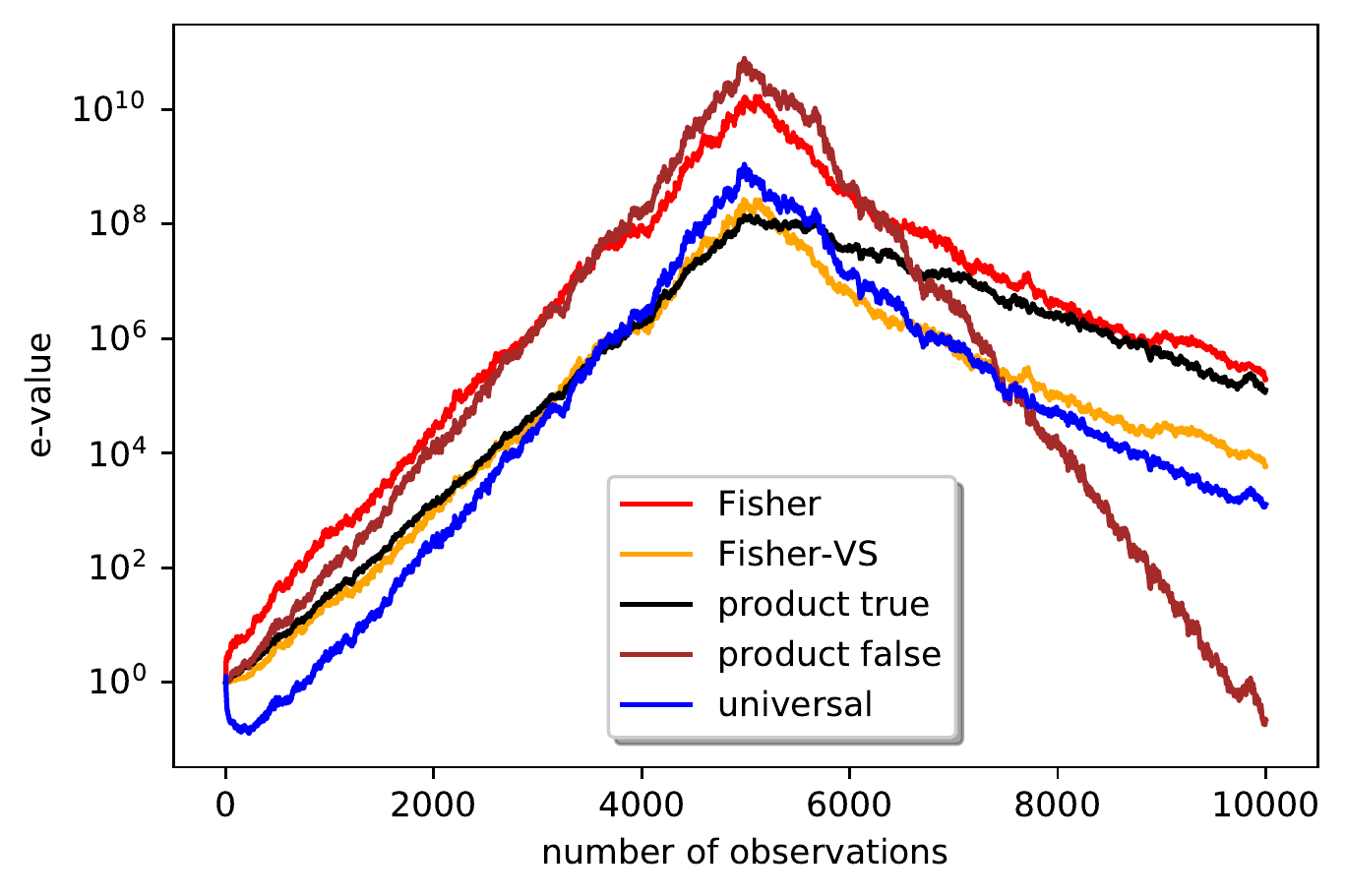}
  \end{center}
  \caption{The analogue of Figure~\ref{fig:combining}
    where the alternative hypothesis is true half of the time
    (details in text).}
  \label{fig:combining_mix}
\end{figure}

Figure~\ref{fig:combining_mix} uses a similar setting to Figure~\ref{fig:combining};
in particular,
the observations are generated from the Gaussian model $N(\mu,1)$,
the null hypothesis is $\mu=0$ and the alternative hypothesis is $\mu=-0.1$.
But now we generate only half (namely, the first half) of the data ($10{,}000$ observations overall) from the alternative distribution,
and the rest from the null distribution.
The e-variable is the likelihood ratio \eqref{eq:data-2} of the ``true'' probability density to the null probability density,
so that we assume it known that half of the observations are generated from the alternative distribution.
The results for \eqref{eq:data-2} are shown in Figure~\ref{fig:combining_mix} as the black line
(all graphs in that figure use the medians over 100 seeds).
Comparing the black line with the red line (representing Fisher's method),
we can see that their final values are approximately the same.
For the comparison to be fairer, we should compare the black line with the orange one
(representing the VS bound for Fisher's method);
the final value for the black line is significantly higher.
Despite the method of multiplication lagging behind Fisher's and the VS bound for it over the first half of the data,
it then catches up with them.

As we said in Section~\ref{subsec:Bayes-factors},
p-values are usually associated with frequentist statistics
while e-values are closely connected to Bayesian statistics.
As discussed in Section~\ref{sec:experiments},
the latter often require stronger assumptions,
which is typical of Bayesian statistics.
This can be illustrated using the two ways of generating data
that we consider in Section~\ref{sec:experiments} and in this appendix so far:
always using $N(-0.1,1)$ or first using $N(-0.1,1)$ and then $N(0,1)$.
Whereas the p-value is always computed using the same formula
(namely, \eqref{eq:p-1}),
% $N(x)$, where $x$ is the observation and $N$ is the standard Gaussian distribution function),
the e-value is computed as the likelihood ratio~\eqref{eq:data-1} or the likelihood ratio~\eqref{eq:data-2}.
The fact that more knowledge is assumed in the case of e-values
is further illustrated by the brown line in Figure~\ref{fig:combining_mix},
which is the graph for the product rule that uses the ``wrong'' likelihood ratio \eqref{eq:data-1}
in the case where the alternative hypothesis is true half of the time
(as for the other graphs in that figure).
Over the first half of the data the product rule performs very well
(as in Figure~\ref{fig:combining}),
but then it loses all evidence gathered against the null hypothesis.
Its final value is approximately 1, despite the null hypothesis being false.
The blue line corresponds to the universal test martingale~\eqref{eq:universal}
and does not have this deficiency.

  \section{FACT algorithm}
  \label{app:FACT}

  \begin{algorithm}[bt]
    \caption{FACT (FAst Closed Testing)}
    \label{alg:FACT}
    \begin{algorithmic}[1] % [1] means numbering each line (optional)
      \Require
        A sequence of p-values $p_1,\dots,p_K$.
      \State Find a permutation $\pi$ of $\{1,\dots,K\}$ such that $p_{\pi(1)}\le\dots\le p_{\pi(K)}$.
      \State Define the order statistics $p_{(k)}:=p_{\pi(k)}$, $k\in\{1,\dots,K\}$.
      \For{$i=1,\dots,K$}
        \State $P_i:=F(p_{(i)},\dots,p_{(K)})$
      \EndFor
      \For{$k=1,\dots,K$}
        \State $p^*_{\pi(k)}:=F(p_{\pi(k)})$
        \For{$i=k+1,\dots,K$}
          \State $p := F(p_{\pi(k)},p_{(i)},\dots,p_{(K)})$
          \If{$p > p^*_{\pi(k)}$}
            \State $p^*_{\pi(k)} := p$
          \EndIf
        \EndFor
        \For{$i=1,\dots,k$} % \label{ln:missing-1}
          \If{$P_i > p^*_{\pi(k)}$}
            \State $p^*_{\pi(k)} := P_i$ % \label{ln:missing-2}
          \EndIf
        \EndFor
      \EndFor
    \end{algorithmic}
  \end{algorithm}

  Algorithm~\ref{alg:FACT} is a generic procedure that turns any p-merging function $F$
  into a function performing multiple hypothesis testing.
  It is equivalent to the closed testing procedure provided the p-merging function $F$ is symmetric
  and monotonically increasing in each (equivalently, any) of its arguments.
  It is a version of Dobriban's [\citeyear{Dobriban:2020}] Algorithm~2.

  \begin{algorithm}[bt]
    \caption{FACT on top of Fisher's method}
    \label{alg:FACT-Fisher}
    \begin{algorithmic}[1] % [1] means numbering each line (optional)
      \Require
        A sequence of p-values $p_1,\dots,p_K$.
      \State Find a permutation $\pi$ of $\{1,\dots,K\}$ such that $p_{\pi(1)}\le\dots\le p_{\pi(K)}$.
      \State Define the order statistics $p_{(k)}:=p_{\pi(k)}$, $k\in\{1,\dots,K\}$.
      \State $S_{K+1}:=0$
      \For{$i=K,\dots,1$}
        \State $S_i := S_{i+1} - 2\ln p_{(i)}$
        \State $P_i:=1-F^{\chi^2}_{2(K+1-i)}(S_i)$
      \EndFor
      \For{$k=1,\dots,K$}
        \State $p^*_{\pi(k)}:=p_{\pi(k)}$\label{ln:identity}
        \For{$i=K,\dots,k+1$}
          \State $p := 1-F^{\chi^2}_{2(K+2-i)}(-2\ln p_{\pi(k)} + S_i)$
          \If{$p > p^*_{\pi(k)}$}
            \State $p^*_{\pi(k)} := p$
          \EndIf
        \EndFor
        \For{$i=1,\dots,k$}
          \If{$P_i > p^*_{\pi(k)}$}
            \State $p^*_{\pi(k)} := P_i$
          \EndIf
        \EndFor
      \EndFor
    \end{algorithmic}
  \end{algorithm}

  When specialized to Fisher's combination method,
  Algorithm~\ref{alg:FACT} becomes Algorithm~\ref{alg:FACT-Fisher},
  where $F^{\chi^2}_{n}$ stands for the $\chi^2$ distribution function with $n$ degrees of freedom
  and line~\ref{ln:identity} uses the easy-to-check identity
  \[
    1 - F^{\chi^2}_2(-2\ln p)
    =
    p.
  \]
  Algorithm~\ref{alg:FACT-Fisher} is used in our code \citep{Vovk/Wang:code}
  for producing Figures~\ref{fig:multiple_small} and~\ref{fig:multiple_big}.
\end{document}